\documentclass[11pt]{amsart}
\usepackage{lmodern}
\usepackage[T1]{fontenc}
\usepackage{microtype}
\usepackage{amssymb}
\usepackage{amsthm}
\usepackage{amscd}
\usepackage{hyperref}
\usepackage{cite}

\usepackage{tikz}
\usetikzlibrary{fit}
\usetikzlibrary{shapes.geometric, positioning}
%\usetikzlibrary{decorations.pathreplacing}
%\usetikzlibrary{patterns}
%\usetikzlibrary{snakes}

\newtheorem{theorem}{Theorem}[section]
\newtheorem{lemma}[theorem]{Lemma}

\newtheorem{cor}[theorem]{Corollary}

\theoremstyle{definition}
\newtheorem{definition}[theorem]{Definition}

% change default numbering for enumerate environment to be in parentheses

\newcommand{\sh}{\operatorname{sh}}

\newcommand{\pa}{\operatorname{part}}

\makeatletter

\def\dotminussym#1#2{%
  \setbox0=\hbox{$\m@th#1-$}%
  \kern.5\wd0%
  \hbox to 0pt{\hss\hbox{$\m@th#1-$}\hss}%
  \raise.6\ht0\hbox to 0pt{\hss$\m@th#1.$\hss}%
  \kern.5\wd0}

\mathchardef\mhyphen="2D

% display breaks

\allowdisplaybreaks[2]

\newcommand{\Disc}[2]{\ensuremath{\mathtt{Disc}_{#1}[#2]}}

\begin{document}

\title{$\sigma$-Algebras for Quasirandom Hypergraphs}
\author{Henry Towsner}
\date{\today}
\thanks{Partially supported by NSF grant DMS-1340666.}
\address {Department of Mathematics, University of Pennsylvania, 209 South 33rd Street, Philadelphia, PA 19104-6395, USA}
\email{htowsner@math.upenn.edu}
\urladdr{\url{http://www.math.upenn.edu/~htowsner}}

\begin{abstract}
We examine the correspondence between the various notions of quasirandomness for $k$-uniform hypergraphs and $\sigma$-algebras related to measurable hypergraphs.  This gives a uniform formulation of most of the notions of quasirandomness for dense hypergraphs which have been studied, with each notion of quasirandomness corresponding to a $\sigma$-algebra defined by a collection of subsets of $[1,k]$.

We associate each notion of quasirandomness $\mathcal{I}$ with a collection of hypergraphs, the $\mathcal{I}$-adapted hypergraphs, so that $G$ is quasirandom exactly when it contains roughly the correct number of copies of each $\mathcal{I}$-adapted hypergraph.  We then identify, for each $\mathcal{I}$, a particular $\mathcal{I}$-adapted hypergraph $M_k[\mathcal{I}]$ with the property that if $G$ contains roughly the correct number of copies of $M_k[\mathcal{I}]$ then $G$ is quasirandom in the sense of $\mathcal{I}$.  This generalizes recent results of Kohayakawa, Nagle, R\"odl, and Schacht; Conlon, H\`an, Person, and Schacht; and Lenz and Mubayi giving this result for some notions of quasirandomness.
\end{abstract}

\maketitle

\section{Introduction}

A sequence of graphs $G_n=(V_n,E_n)$ with $|V_n|\rightarrow\infty$ is \emph{$p$-quasirandom} if for any $U\subseteq V_n$,
\[\left|{U\choose 2}\cap E_n\right|=p{|U|\choose 2}+o(|V_n|^2).\]
This notion has been extensively studied, and many equivalent formulations of $p$-quasirandomness are known \cite{MR1054011,MR2825535,MR1940121,MR2278000,MR2488748,MR2591049,MR1645698,MR1988980,MR2080111,MR2676838,MR930498}.  In particular, Chung, Graham, and Wilson \cite{MR1054011} showed that $\{G_n\}$ is $p$-quasirandom exactly when
\[\lim_{n\rightarrow \infty}t_H(G_n)=p^{|F|}\]
for every finite graph $H=(W,F)$, where $t_H(G_n)$ is the probability that a randomly selected $\pi:H\rightarrow G_n$ is a homomorphism.  They further showed there is a single choice of $H$---the $4$-cycle $C_4$---so that when $\lim_{n\rightarrow\infty}\frac{|E_n|}{|{V_n\choose 2}|}=p$ and $\lim_{n\rightarrow\infty}t_{C_4}(G_n)=p^4$, the sequence $G_n$ is $p$-quasirandom.

We are interested in the generalization of this equivalence to hypergraphs.  A variety of notions of quasirandomness for hypergraphs have been proposed \cite{chung:MR1138430,MR2864651,chung:MR1068494,frankl:MR1884430}, and work of Lenz and Mubayi \cite{2012arXiv1208.5978L}, building on work by Chung \cite{chung:MR1138430}, shows that these notions are not linearly ordered.  Since there are many equivalent characterizations of quasirandomness for graphs, an ongoing area of study has been finding analogous results for hypergraphs \cite{MR2864654,MR2595699,MR2864650}.

Different notions of quasirandomness for hypergraphs correspond to counting different kinds of hypergraphs.  Kohayakawa, Nagle, R\"odl, and Schacht \cite{MR2595699} have shown that one notion of quasirandomness for hypergraphs, $\mathtt{Disc}_p$, implies that $\lim_{n\rightarrow\infty}t_H(G_n)$ converges to the correct (i.e., the expected value in an actual random hypergraph) value when $H$ is a \emph{linear} hypergraph---a graph where any two edges share at most one vertex.  Conlon, H\`an, Person, and Schacht \cite{MR2864650} have shown that there is a particular linear hypergraph $M$ such that when $\lim_{n\rightarrow\infty}t_{M}(G_n)$ converges to the right value, $G_n$ is $\mathtt{Disc}_p$.   These results were further extended by Lenz and Mubayi \cite{2012arXiv1208.4863L} to the case of $\mathtt{Expand}_p[\mathcal{I}]$ where $\mathcal{I}$ is a partition of $[k]$ and and we wish to count ``$\mathcal{I}$-linear'' hypergraphs.

In this paper we generalize these results to a wider family of notions of quasirandomness.  We will characterize all these notions of quasirandomness for $k$-uniform hypergraphs as instances of a family we call \Disc{p}{\mathcal{I}} where $\mathcal{I}$ is a collection of subsets of $[0,k-1]$ which is \emph{subset-free}---if $I\subseteq J$ with $I,J\in\mathcal{I}$ then $I=J$.  These notions have been previously studied in connection with the hypergraph generalization of Szemer\'edi's regularity lemma and related results like Szemer\'edi's Theorem and hypergraph removal \cite{MR2195580,MR2373376,tao07,henry12:_analy_approac_spars_hyper}.

These notions seem to completely capture the notion of quasirandomness for dense hypergraphs.  In particular, for suitable choices of $\mathcal{I}$ we obtain all the notions in \cite{2012arXiv1208.5978L}, as well as some additional ones.  We also encounter weak notions which can be treated in the same way; for instance, suitable choice of $\mathcal{I}$ gives the graphs in which every vertex has approximately the same degree.

Our main result is the following:
\begin{theorem}\label{thm:main_1}
  Let $\{G_n\}$ with each $G_n=(V_n,E_n)$ be a sequence of $k$-uniform hypergraphs with $\lim_{n\rightarrow\infty}\frac{|E_n|}{|{V_n\choose k}|}=p$.  The following are equivalent:
  \begin{itemize}
  \item $\{G_n\}$ is \Disc{p}{\mathcal{I}},
  \item For every $\mathcal{I}$-adapted hypergraph $H=(W,F)$, $\lim_{n\rightarrow\infty}t_H(G_n)$ converges to $p^{|F|}$,
  \item $\lim_{n\rightarrow\infty}t_{M_k[\mathcal{I}]}(G_n)$ converges to $p^{2^{\mathcal{I}}}$.
  \end{itemize}
\end{theorem}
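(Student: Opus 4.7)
The plan is to prove the equivalence as a cyclic chain $(1) \Rightarrow (2) \Rightarrow (3) \Rightarrow (1)$. The middle step $(2) \Rightarrow (3)$ should reduce to verifying that $M_k[\mathcal{I}]$ is itself an $\mathcal{I}$-adapted hypergraph with exactly $2^{\mathcal{I}}$ edges, which is presumably immediate from its construction.

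For $(1) \Rightarrow (2)$, I would use a standard telescoping/hybrid argument. Writing $t_H(G_n)$ as an expectation over uniformly random maps $\pi : W \to V_n$ of the product $\prod_{e \in F} 1_{E_n}(\pi(e))$, one replaces each factor $1_{E_n}$ by the constant $p$ one edge at a time. Each swap introduces an error term equal to an average of $1_{E_n} - p$ against a function of the remaining edges. The hypothesis that $H$ is $\mathcal{I}$-adapted should be tailored exactly so that the coefficient function in each error term has the shape controlled by $\Disc{p}{\mathcal{I}}$; consequently each of the $|F|$ error terms is $o(1)$ as $n \to \infty$, and the telescoping sum yields $t_H(G_n) \to p^{|F|}$.

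The hard direction is $(3) \Rightarrow (1)$. The strategy is to realize $M_k[\mathcal{I}]$ as the ``Cauchy--Schwarz doubling'' attached to $\mathcal{I}$: iterated Cauchy--Schwarz applied to the discrepancy sum defining $\Disc{p}{\mathcal{I}}$, doubling coordinates according to the sets $I \in \mathcal{I}$, should produce exactly the homomorphism count from $M_k[\mathcal{I}]$, minus the expected main term $p^{2^{\mathcal{I}}}$. Schematically this gives an inequality of the form
\[
\bigl|\text{discrepancy of } G_n\bigr|^{2^{\mathcal{I}}} \;\le\; t_{M_k[\mathcal{I}]}(G_n) - p^{2^{\mathcal{I}}} + o(1),
\]
from which assumption $(3)$ forces the discrepancy to vanish. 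This generalizes the role of $C_4$ in the Chung--Graham--Wilson theorem and of the specific linear hypergraphs in Kohayakawa--Nagle--R\"odl--Schacht, Conlon--H\`an--Person--Schacht, and Lenz--Mubayi.

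The main obstacle is this last step, in two respects. First, one must give the right definition of $M_k[\mathcal{I}]$ for an arbitrary subset-free collection $\mathcal{I}$ so that it is $\mathcal{I}$-adapted and arises naturally from the doubling operation. Second, one must organize the Cauchy--Schwarz steps so that at each stage the remaining factor has the correct measurability with respect to the $\sigma$-algebras indexed by proper subsets of each $I \in \mathcal{I}$; otherwise the final bound will involve the wrong hypergraph. The $\sigma$-algebra framework developed earlier in the paper should carry most of this bookkeeping, by identifying $\Disc{p}{\mathcal{I}}$, in a hypergraphon-type limit, with orthogonality of the edge function to a specific join of lower-complexity $\sigma$-algebras, and simultaneously identifying $t_{M_k[\mathcal{I}]}(G_n) - p^{2^{\mathcal{I}}}$ with the corresponding squared $L^2$-norm.
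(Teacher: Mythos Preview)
Your cyclic scheme $(1)\Rightarrow(2)\Rightarrow(3)\Rightarrow(1)$ and your treatment of $(1)\Rightarrow(2)$ via a telescoping/hybrid argument match the paper's approach (the paper carries this out in the analytic limit as Theorem~\ref{thm:counting}, but the content is the same edge-by-edge replacement you describe). Likewise $(2)\Rightarrow(3)$ is, as you say, just the fact that $M_k[\mathcal{I}]$ is $\mathcal{I}$-adapted with $2^{|\mathcal{I}|}$ edges.

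The gap is in $(3)\Rightarrow(1)$. Your schematic inequality
\[
\bigl|\text{discrepancy}\bigr|^{2^{|\mathcal{I}|}}\;\le\; t_{M_k[\mathcal{I}]}(G_n)-p^{2^{|\mathcal{I}|}}+o(1)
\]
conflates two quantities that are \emph{not} equal in general. Iterated Cauchy--Schwarz bounds the discrepancy by the Gowers-type quantity $\|\chi_E-p\|_{M_k[\mathcal{I}]}^{2^{|\mathcal{I}|}}=t_{M_k[\mathcal{I}]}(\chi_E-p)$, but this is \emph{not} $t_{M_k[\mathcal{I}]}(\chi_E)-p^{2^{|\mathcal{I}|}}$: expanding $(\chi_E-p)$ over the $2^{|\mathcal{I}|}$ edges produces many cross terms (counts of proper subgraphs of $M_k[\mathcal{I}]$ weighted by powers of $p$), and these do not vanish from the hypothesis on $M_k[\mathcal{I}]$ alone. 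Your final sentence, ``identifying $t_{M_k[\mathcal{I}]}(G_n)-p^{2^{|\mathcal{I}|}}$ with the corresponding squared $L^2$-norm,'' is exactly the step that fails without further input.

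The paper closes this gap with two ingredients you have not mentioned. First, an ``upgrade'' lemma (Lemma~\ref{thm:upgrade}): from $\|\chi_A\|_{M_k[\mathcal{I}]}=p$ and $\int\chi_A=p$ one deduces $\|\chi_A\|_{M_k[\mathcal{I}']}=p$ for \emph{every} $\mathcal{I}'\subseteq\mathcal{I}$, via a one-step Cauchy--Schwarz. This lets one run an induction on $|\mathcal{I}|$ and assume $A$ is already $\Disc{p}{\mathcal{I}'}$ for all proper $\mathcal{I}'\subsetneq\mathcal{I}$. Second, a \emph{three}-term decomposition $\chi_A=p+f+g$ with $f=\mathbb{E}(\chi_A-p\mid\mathcal{B}_{k,\mathcal{I}})$ and $g=\chi_A-p-f$, rather than the two-term split your sketch implicitly uses. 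Terms containing $g$ vanish because $\|g\|_{M_k[\mathcal{I}]}=0$; the mixed $p/f$ terms vanish only because, by the inductive hypothesis, $f$ is $\Disc{0}{\mathcal{I}'}$ for every proper $\mathcal{I}'$ (this is the content of the lemma immediately preceding Theorem~\ref{thm:mk}). What remains is $p^{2^{|\mathcal{I}|}}+\|f\|_{M_k[\mathcal{I}]}^{2^{|\mathcal{I}|}}=p^{2^{|\mathcal{I}|}}$, forcing $f=0$. Without the induction and the three-way split, the cross terms are genuinely uncontrolled and your displayed inequality need not hold.

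A secondary methodological difference: the paper transfers everything to an ultraproduct (graded probability space), proves the analytic statement there, and pulls back via Theorem~\ref{thm:main_finite}. Your sketch stays in the finite world with $o(1)$ errors. Either route can be made to work, but note that the paper's definition of $\Disc{p}{\mathcal{I}}$ in the finite setting tests against symmetrized families $\mathcal{K}_k(\{H_I\})$, and the passage between this and the $\sigma$-algebra formulation (Lemma~\ref{thm:symmetrizing}) is itself a nontrivial step your outline does not address.
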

We define the notion of a $\mathcal{I}$-adapted hypergraph and the particular $\mathcal{I}$-adapted hypergraph $M_k[\mathcal{I}]$ in Section \ref{sec:subsetfree}.
 
In the case where $\mathcal{I}$ is the set of singletons, this is given in \cite{MR2595699,MR2864650}; in the case where $\mathcal{I}$ is a partition, this is given in \cite{2012arXiv1208.5978L}; in the case where $\mathcal{I}={[0,k-1]\choose l}$ for $1<l<k$, this is conjectured in \cite{MR2864650}.

We also show that these notions are distinct for distinct $\mathcal{I}$; many cases of this were shown in \cite{2012arXiv1208.5978L}, and our method is essentially the one used there.

We will use analytic methods: we pass from a sequence of hypergraphs to an infinite measurable hypergraph which is a limit of this sequence.  This approach to graph theory has been well-studied in recent years \cite{borgs:MR2455626,lovasz:MR2306658,lovasz:MR2274085,diaconis:MR2463439,aldous:MR637937,hoover:arrays,kallenberg:MR2161313,austin08,hrushovski,goldbring:_approx_logic_measure,MR2964622,tao07} under various names, including \emph{graph limits} and \emph{ultraproducts}.  

 The relationship between quasirandomness notions and Szemer\'edi's Regularity lemma and its variants is well-understood \cite{MR1099576}.  In the limiting graph, the analog of regularity can be seen as follows: the graph itself is a measurable subset $E$ whose characteristic function $\chi_E$ is then an $L^\infty$ function.  The measurable sets of pairs, $\mathcal{B}_2$, are a $\sigma$-algebra.  There is a natural sub-$\sigma$-algebra $\mathcal{B}_{2,1}\subseteq\mathcal{B}_2$ generated by rectangles, so we can consider the projection $\mathbb{E}(\chi_E\mid\mathcal{B}_{2,1})$.  The projection plays the role of the decomposition into regular pieces in the usual regularity lemma.  The analytic view of hypergraph regularity has been studied in \cite{MR2815610,elek07,tao:MR2259060,goldbring:_approx_logic_measure}.  A strong form of regularity due to Tao \cite{MR2212136} is precisely equivalent to the existence of this projection.  Similarly, hypergraph regularity corresponds to the existence of a series of projections onto a chain of descending $\sigma$-algebras.

In this paper we identify notions of quasirandomness with the particular $\sigma$-algebras studied in \cite{MR2212136,henry12:_analy_approac_spars_hyper}.  Theorem \ref{thm:main_1} described above then follows in a uniform way.

In Section \ref{sec:subsetfree} we introduce the basic hypergraph notions we need---the subset-free collections $\mathcal{I}$, the $\mathcal{I}$-adapted hypergraphs, and the hypergraphs $M_k[\mathcal{I}]$.  In Section \ref{sec:analytic} we introduce our analytic framework, the setting of the \emph{graded probability space}, and describe the $\sigma$-algebras $\mathcal{B}_{k,\mathcal{I}}$; we then state the main results about the analytic setting.  In Section \ref{sec:ultraproducts} we describe the main theorem about ultraproducts we need in this paper; to the extent possible, we avoid the details of the construction itself.  In Section \ref{sec:finite} we finally describe our quasirandomness notions for sequences of finite hypergraphs; we relate these notions to properties of the corresponding ultraproducts and show how our main results in the finitary setting follow from our results in the analytic setting.  Sections \ref{sec:counting} and \ref{sec:seminorms} prove the main analytic results.  In Section \ref{sec:separating} we show that our notions of randomness are all distinct, in both the finitary and analytic settings.

The author is grateful to Alexandra Kjuchukova for pointing out a mistake in a previous version of this paper.

\section{Notation}\label{sec:notation}

\begin{definition}
We write $[k]$ for the set $\{0,1,\ldots,k-1\}$.

When $V$ is a set, we write ${V\choose k}$ for the set of subsets of $V$ of size exactly $k$.

  $(V,E)$ is a \emph{$k$-uniform hypergraph} if $V$ is a set and $E\subseteq{V\choose k}$.  If $k\leq l$, $(V_0,\ldots,V_{l-1},E)$ is an \emph{$l$-partite $k$-uniform hypergraph} if each $V_i$ is a set, the sets $V_0,\ldots,V_{l-1}$ are pairwise disjoint, and $E\subseteq\bigcup_{s\in{[l]\choose k}}\prod_{i\in S}V_i$.  In either case, we call an element of $E$ a \emph{hyperedge}.

In an $l$-partite hypergraph, for any $x\in \bigcup_{i< l}V_i$ we write $\pa(x)$ for the unique $i$ such that $x\in V_i$.  If $e\subseteq\bigcup_{i< l}V_i$ we write $\pa(e)=\{\pa(x)\mid x\in e\}$.
\end{definition}
In the definition of an $l$-partite hypergraph, the hyperedges are exactly sets $e\in{\bigcup_{i< l}V_i\choose k}$ such that for each $i<l$, $|e\cap V_i|\leq 1$.

As usual, a $2$-uniform hypergraph is called a \emph{graph} and the hyperedges of a graph are \emph{edges}.

Throughout this paper we use a slightly unconventional notation for tuples.  When $V$ is a finite set, a \emph{$V$-tuple from $G$} is a function $\vec x_V:V\rightarrow G$.  We write $G^V$ for the set of $V$-tuples.  If for each $v\in V$ we have designated an element $x_v\in G$, we write $\vec x_V$ for the tuple $\vec x_V(v)=x_v$.  Conversely, if we have specified a $V$-tuple $\vec x_V$, we often write $x_v$ for $\vec x_V(v)$.  When $V,W$ are disjoint sets, we write $\vec x_V\cup \vec x_W$ for the corresponding $V\cup W$-tuple.  (We will always assume $V$ and $W$ are disjoint when discussing $V\cup W$-tuples.)  When $I\subseteq V$ and $\vec x_V$ is a given $V$-tuple, we write $\vec x_I$ for the corresponding $I$-tuple: $\vec x_I(i)=\vec x_V(i)$ for $i\in I$.  When $B\subseteq G^{W\cup V}$, we will write $B(\vec a_W)$ for the slice $\{\vec x_V\mid \vec a_W\cup \vec x_V\in B\}$ corresponding to those coordinates.  

If $\pi:V\rightarrow W$ is injective, $\vec x_V$ is a tuple, and $I\subseteq W$, we abuse notation to write $\vec x_{\pi^{-1}(I)}$ for the $I$-tuple $\vec x_{\pi^{-1}(I)}(i)=\vec x_V(\pi^{-1}(i))$.
%If $\pi$ is a permutation of $V$, we will abuse notation slightly to write $\vec x_{\pi(V)}$ for the $V$-tuple where $\vec x_{\pi(V)}(\pi(i))=\vec x_V(i)$.

\section{Subset-Free Sets}\label{sec:subsetfree}

In order to parameterize our notions of randomness, we need the following:
\begin{definition}
Suppose $|V|=k$.  A collection of subsets $\mathcal{I}$ of $V$ is called \emph{subset-free on $V$} if $\mathcal{I}$ is non-empty and there are no $I,J\in\mathcal{I}$ with $I\subsetneq J$.
\end{definition}

Three families of subset-free collections are of particular interest for us:
\begin{itemize}
\item Any partition of $[k]$ is subset-free,
\item For any $n\leq k$, ${[k]\choose n}$ is subset-free,
\item If we take any $l<k$, the collection of subsets of $[k]$ of size $k-1$ containing $[l]$ is subset-free; by abuse of notation, we write $[l]\cap{[k]\choose k-1}$ for this collection.
\end{itemize}

Subset-free subsets of $V$ are in one-to-one correspondence with \emph{downsets} of subsets of $V$---collections which are closed under subset---and the ideas here could also be expressed, with minor notational differences, in terms of downsets (see \cite{tao07}, where this approach is taken).

\begin{definition}
If $\mathcal{I}$ and $\mathcal{J}$ are subset-free collections on $V$ then we say $\mathcal{I}\leq_s\mathcal{J}$ if for every $I\in\mathcal{I}$ there is a $J\in\mathcal{J}$ with $I\subseteq J$.  If $\mathcal{I}$ is a subset-free collection on $V$ and $\mathcal{J}$ is a subset-free collection on $W$ with $|V|=|W|$ then we say $\mathcal{I}\leq\mathcal{J}$ if there is some $\pi:W\rightarrow V$ so that for each $I\in\mathcal{I}$, there is a $J\in\mathcal{J}$ so that $I\subseteq\pi(J)$.
\end{definition}
The $s$ in $\leq_s$ stands for \emph{strong}.  Note that we can consider $\leq$ even when $V=W$.

\begin{definition}
  If $\mathcal{I}$ is a non-empty collection of subsets of $V$, we define $\mathcal{I}^{\#}\subseteq\mathcal{I}$ to be those $I\in\mathcal{I}$ such that no $J\in\mathcal{I}$ has $J\supsetneq I$.
\end{definition}
Clearly $\mathcal{I}^{\#}$ is subset-free.

\begin{definition}
  Let $F=(V,E)$ be a finite $k$-uniform hypergraph.  For any $e$, we define the \emph{shadow cast by $E$ on $e$}, $\sh_E(e)$, to be 
\[\{e\cap e'\mid e'\in E\setminus\{e\}\}^{\#}.\]

We say $F=(V,E)$ is \emph{$\mathcal{I}$-adapted} if there is an ordering $E=\{e_0,e_1,\ldots,e_{|E|-1}\}$ such that for each $i< |E|$, $\sh_{\{e_0,\ldots,e_{i-1}\}}(e_i)\leq\mathcal{I}$.  %We say $F=(V,E)$ is \emph{strongly $\mathcal{I}$-adapted} if for each $e\in E$, $\sh_E(e)\leq\mathcal{I}$.

When $F=(V_1,\ldots,V_k,E)$ is a finite $k$-partite $k$-uniform hypergraph and $\mathcal{I}$ is subset-free on $[k]$, we say $F$ is \emph{strongly $\mathcal{I}$-adapted} if for each $e\in E$, $\pa(\sh_E(e))\leq_s\mathcal{I}$.
\end{definition}
%Being strongly $\mathcal{I}$-adapted is the same as saying that the choice of ordering of the edges does not matter.

For example, recall that a graph is linear if for any two distinct edges $e\neq e'$, $|e\cap e'|\leq 1$ (that is, two edges have at most one point in common).  A graph is linear exactly when it is ${[k]\choose 1}$-adapted.  On the other hand, every $k$-uniform hypergraph is ${[k]\choose k-1}$-adapted.  When $\mathcal{I}$ is a partition, the $\mathcal{I}$-adapted hypergraphs are precisely the $\mathcal{I}$-linear hypergraphs of \cite{2012arXiv1208.4863L}.

For each subset-free $\mathcal{I}$ on $[k]$, we identify a $k$-uniform hypergraph $M_k[\mathcal{I}]$.
\begin{definition}
The vertices $V_k[\mathcal{I}]$ of $M_k[\mathcal{I}]$ are pairs $(j,\tau)$ where $\tau:\mathcal{I}\rightarrow\{0,1,\ast\}$ is a function with $\tau(I)=\ast$ iff $j\in I$.  The edges of $M_k[\mathcal{I}]$ are given by functions $\sigma:\mathcal{I}\rightarrow\{0,1\}$ where the corresponding edge $e_\sigma$ consists of those $(j,\tau)$ such that for each $I\in\mathcal{I}$, $\tau(I)\in\{\sigma(I),\ast\}$.
\end{definition}
Note that for each edge $e_\sigma$ and each $j<k$, there is exactly one $\tau$ such that $(j,\tau)\in e_\sigma$, so $M_k[\mathcal{I}]$ is $k$-partite.

This definition of $M_k[\mathcal{I}]$ is abstract, but will be convenient for our purposes since it is easy to work with formally.  See Figure \ref{fig:mfigure} for $M_3[\mathcal{I}]$ for some choices of $\mathcal{I}$.

\begin{figure}
\begin{tabular}{ccc}
$\mathcal{I}$&Picture of $\mathcal{I}$&$M_3[\mathcal{I}]$\\
% $\{\{1,2\},\{1,3\},\{2,3\}\}$&
%   \begin{tikzpicture}[node distance=0.2in,
%     point/.style={draw,shape=circle,fill=black,minimum size=1.5mm,inner sep=0pt}]
% \node[point](a){};\node[point,below left of=a](b){};\node[point,below right of=a](c){};
% \draw( [xshift=-1mm,yshift=1mm] a.center)  -- ([xshift=-1mm,yshift=1mm] b.center) arc (135:315:1.41mm)--([xshift=1mm,yshift=-1mm] a.center) arc (-45:135:1.41mm);
% \draw( [xshift=-1mm,yshift=-1mm] a.center)  -- ([xshift=-1mm,yshift=-1mm] c.center) arc (225:405:1.41mm)--([xshift=1mm,yshift=1mm] a.center) arc (45:225:1.41mm);
% \draw ([yshift=1.2mm] b.center) -- ([yshift=1.2mm] c.center) arc (90:-90:1.2mm) -- ([yshift=-1.2mm] b.center) arc (270:90:1.2mm);
% \end{tikzpicture}
% &
%   \begin{tikzpicture}[node distance=0.2in,
%     point/.style={draw,shape=circle,fill=black,minimum size=1.5mm,inner sep=0pt}]
% \node[point](a0){};
% \node[point,right of=a0](a1){};
% \node[point, below of=a0](b0){};
% \node[point,right of=b0](b1){};
% \node[point,below of=b0](c0){};
% \node[point,right of=c0](c1){};
% \end{tikzpicture}
% \\
$\{\{1,2\},\{2,3\}\}$&
  \begin{tikzpicture}[node distance=0.2in,
point/.style={draw,shape=circle,fill=black,minimum size=1.5mm,inner sep=0pt}]
\node[point](a){};\node[point,below of=a](b){};\node[point,below of=b](c){};
%\node[point,position=a,shift=(-1mm,1mm)](a1){};
\draw[cap=round] ([xshift=-1.4mm] a.center) -- ([xshift=-1.4mm] b.center) arc (180:360:1.4mm) --([xshift=1.4mm] a.center) arc (0:180:1.4mm);
\draw[cap=round] ([xshift=-1.4mm] b.center) -- ([xshift=-1.4mm] c.center) arc (180:360:1.4mm) --([xshift=1.4mm] b.center) arc (0:180:1.4mm);
\end{tikzpicture}
&
  \begin{tikzpicture}[node distance=0.2in,
    point/.style={draw,shape=circle,fill=black,minimum size=1.5mm,inner sep=0pt}]
\node[point](b0){};
\node[left of=b0](x){};
\node[right of=b0](y){};
\node[point,above of=x](a0){};
\node[point,above of=y](a1){};
\node[point,below of=x](c0){};
\node[point,below of=y](c1){};
\draw[rounded corners=0.1cm,fill=red,opacity=0.5] ([xshift=1.4mm] a0.center) -- ([xshift=1.4mm,yshift=1.4mm] x.center) -- ([yshift=1.4mm]b0.center) arc (90:-90:1.4mm) -- ([xshift=1.4mm,yshift=-1.4mm] x.center) -- ([xshift=1.4mm] c0.center) arc (0:-180:1.4mm) -- ([xshift=-1.4mm] a0.center) arc (180:0:1.4mm);
\draw[rounded corners=0.1cm,fill=green,opacity=0.5] ([xshift=-1.4mm] a1.center) -- ([xshift=-1.4mm] b0.center) -- ([xshift=-1.4mm] c0.center) arc (180:360:1.4mm) -- ([xshift=1.4mm] b0.center) -- ([xshift=1.4mm] a1.center) arc (0:180:1.4mm);
\draw[rounded corners=0.1cm,fill=blue,opacity=0.5] ([xshift=-1.4mm] a0.center) -- ([xshift=-1.4mm] b0.center) -- ([xshift=-1.4mm] c1.center) arc (180:360:1.4mm) -- ([xshift=1.4mm] b0.center) -- ([xshift=1.4mm] a0.center) arc (0:180:1.4mm);
\draw[rounded corners=0.1cm,fill=yellow,opacity=0.5] ([xshift=-1.4mm] a1.center) -- ([xshift=-1.4mm,yshift=1.4mm] y.center) -- ([yshift=1.4mm]b0.center) arc (90:270:1.4mm) -- ([xshift=-1.4mm,yshift=-1.4mm] y.center) -- ([xshift=-1.4mm] c1.center) arc (180:360:1.4mm) -- ([xshift=1.4mm] a1.center) arc (0:180:1.4mm);
\end{tikzpicture}
\\\\
$\{\{1,2\},\{3\}\}$&
  \begin{tikzpicture}[node distance=0.2in,
point/.style={draw,shape=circle,fill=black,minimum size=1.5mm,inner sep=0pt}]
\node[point](a){};\node[point,below of=a](b){};\node[point,below of=b](c){};
%\node[point,position=a,shift=(-1mm,1mm)](a1){};
\draw[cap=round] ([xshift=-1.4mm] a.center) -- ([xshift=-1.4mm] b.center) arc (180:360:1.4mm) --([xshift=1.4mm] a.center) arc (0:180:1.4mm);
\draw[cap=round] ([xshift=-1.4mm] c.center)  arc (180:360:1.4mm) arc (0:180:1.4mm);
\end{tikzpicture}
&
  \begin{tikzpicture}[node distance=0.2in,
    point/.style={draw,shape=circle,fill=black,minimum size=1.5mm,inner sep=0pt}]
\node[point](a0){};
\node[point,right of=a0](a1){};
\node[point, below of=a0](b0){};
\node[point,right of=b0](b1){};
\node[point,below of=b0](c0){};
\node[point,right of=c0](c1){};
\draw[rounded corners=0.1cm,fill=red,opacity=0.5] ([xshift=-1.4mm] a0.center) -- ([xshift=-1.4mm] b0.center) -- ([xshift=-1.4mm] c0.center) arc (180:360:1.4mm) -- ([xshift=1.4mm] b0.center) -- ([xshift=1.4mm] a0.center) arc (0:180:1.4mm);
\draw[rounded corners=0.1cm,fill=green,opacity=0.5] ([xshift=-1.4mm] a1.center) -- ([xshift=-1.4mm] b1.center) -- ([xshift=-1.4mm] c0.center) arc (180:360:1.4mm) -- ([xshift=1.4mm] b1.center) -- ([xshift=1.4mm] a1.center) arc (0:180:1.4mm);
\draw[rounded corners=0.1cm,fill=blue,opacity=0.5] ([xshift=-1.4mm] a0.center) -- ([xshift=-1.4mm] b0.center) -- ([xshift=-1.4mm] c1.center) arc (180:360:1.4mm) -- ([xshift=1.4mm] b0.center) -- ([xshift=1.4mm] a0.center) arc (0:180:1.4mm);
\draw[rounded corners=0.1cm,fill=yellow,opacity=0.5] ([xshift=-1.4mm] a1.center) -- ([xshift=-1.4mm] b1.center) -- ([xshift=-1.4mm] c1.center) arc (180:360:1.4mm) -- ([xshift=1.4mm] b1.center) -- ([xshift=1.4mm] a1.center) arc (0:180:1.4mm);
\end{tikzpicture}
\\\\
$\{\{1\},\{2\},\{3\}\}$&
  \begin{tikzpicture}[node distance=0.2in,
point/.style={draw,shape=circle,fill=black,minimum size=1.5mm,inner sep=0pt}]
\node[point](a){};\node[point,below of=a](b){};\node[point,below of=b](c){};
%\node[point,position=a,shift=(-1mm,1mm)](a1){};
\draw[cap=round] ([xshift=1.4mm] a.center) arc (0:360:1.4mm);
\draw[cap=round] ([xshift=1.4mm] b.center) arc (0:360:1.4mm);
\draw[cap=round] ([xshift=1.4mm] c.center) arc (0:360:1.4mm);
\end{tikzpicture}
&
  \begin{tikzpicture}[node distance=0.2in,
    point/.style={draw,shape=circle,fill=black,minimum size=1.5mm,inner sep=0pt}]
\node[point](a0){};
\node[point,right of=a0](a1){};
\node[point,right of=a1](a2){};
\node[point,right of=a2](a3){};
\node[point, below of=a0](b0){};
\node[point,right of=b0](b1){};
\node[point,right of=b1](b2){};
\node[point,right of=b2](b3){};
\node[point,below of=b0](c0){};
\node[point,right of=c0](c1){};
\node[point,right of=c1](c2){};
\node[point,right of=c2](c3){};
\draw[fill=green,opacity=0.5] ([xshift=-1.4mm] a0.center) -- ([xshift=-1.4mm] c0.center) arc (180:360:1.4mm) -- ([xshift=1.4mm] a0.center) arc (0:180:1.4mm);
\draw[rounded corners=0.1cm,fill=red,opacity=0.5] ([xshift=-1.4mm] a1.center) -- ([xshift=-1.4mm] b1.center) -- ([xshift=-1.4mm] c0.center) arc (180:360:1.4mm) -- ([xshift=1.4mm] b1.center) -- ([xshift=1.4mm] a1.center) arc (0:180:1.4mm);
\draw[rounded corners=0.1cm,fill=blue,opacity=0.5] ([xshift=-1.4mm] a2.center) -- ([xshift=-1.7mm] b0.center) -- ([xshift=-1.4mm] c1.center) arc (180:360:1.4mm) -- ([xshift=1.4mm] b0.center) -- ([xshift=1.4mm] a2.center) arc (0:180:1.4mm);
\draw[rounded corners=0.1cm,fill=yellow,opacity=0.5] ([xshift=-1.4mm] a3.center) -- ([xshift=-1.4mm] b1.center) -- ([xshift=-1.4mm] c1.center) arc (180:360:1.4mm) -- ([xshift=1.4mm] b1.center) --([xshift=1.4mm] a3.center) arc (0:180:1.4mm);
\draw[rounded corners=0.1cm,fill=purple,opacity=0.5] ([xshift=-1.4mm] a0.center) -- ([xshift=-1.4mm] b2.center) -- ([xshift=-1.4mm] c2.center) arc (180:360:1.4mm) -- ([xshift=1.4mm] b2.center) --([xshift=1.4mm] a0.center) arc (0:180:1.4mm);
\draw[rounded corners=0.1cm,fill=orange,opacity=0.5] ([xshift=-1.4mm] a1.center) -- ([xshift=-1.4mm] b3.center) -- ([xshift=-1.4mm] c2.center) arc (180:360:1.4mm) -- ([xshift=1.4mm] b3.center) --([xshift=1.4mm] a1.center) arc (0:180:1.4mm);
\draw[rounded corners=0.1cm,fill=black,opacity=0.5] ([xshift=-1.4mm] a2.center) -- ([xshift=-1.4mm] b2.center) -- ([xshift=-1.4mm] c3.center) arc (180:360:1.4mm) -- ([xshift=1.4mm] b2.center) --([xshift=1.4mm] a2.center) arc (0:180:1.4mm);
\draw[rounded corners=0.1cm] ([xshift=-1.4mm] a3.center) -- ([xshift=-1.4mm] b3.center) -- ([xshift=-1.4mm] c3.center) arc (180:360:1.4mm) -- ([xshift=1.4mm] b3.center) --([xshift=1.4mm] a3.center) arc (0:180:1.4mm);
\end{tikzpicture}
\\
\end{tabular}
%   \begin{tikzpicture}[node distance=0.35in,
%     point/.style={draw,shape=circle,minimum size=1.5mm,inner sep=0pt}]
% \node[point](a0){\tiny $x_{\ast00}$};
% \node[point,right of=a0](a1){\tiny $x_{\ast01}$};
% \node[point,right of=a1](a2){\tiny $x_{\ast10}$};
% \node[point,right of=a2](a3){\tiny $x_{\ast11}$};
% \node[point, below of=a0](b0){\tiny $y_{0\ast0}$};
% \node[point,right of=b0](b1){\tiny $y_{0\ast1}$};
% \node[point,right of=b1](b2){\tiny $y_{1\ast0}$};
% \node[point,right of=b2](b3){\tiny $y_{1\ast1}$};
% \node[point,below of=b0](c0){\tiny $z_{00\ast}$};
% \node[point,right of=c0](c1){\tiny $z_{01\ast}$};
% \node[point,right of=c1](c2){\tiny $z_{10\ast}$};
% \node[point,right of=c2](c3){\tiny $z_{11\ast}$};
% \draw ([xshift=-1.2mm] a0.west) -- ([xshift=-1.2mm] c0.west) arc (180:360:4.05mm) -- ([xshift=1.2mm] a0.east) arc (0:180:4.05mm);
% \end{tikzpicture}
\caption{$M_3[\mathcal{I}]$}
\label{fig:mfigure}
\end{figure}
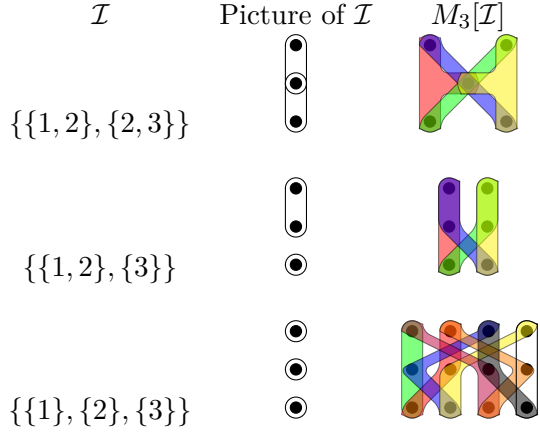

Many specific instances of the $M_k[\mathcal{I}]$ are familiar.  The most interesting graph case, $M_2[{[2]\choose 1}]$, is the cycle of length $4$.  More generally, $M_k[{[k]\choose k-1}]$ is the octahedron.  $M_k[{[k]\choose 1}]$ is the graph $M$ of \cite{MR2864650} and $M_k[[l]\cap{[k]\choose k-1}]$ is the squashed octahedron.

The hypergraphs $M_k[\mathcal{I}]$ can be described by successive doubling in the following way.  Observe that the vertices $V_k[\emptyset]$ are exactly pairs $(j,\langle\rangle)$, and there is exactly one edge of $M_k[\emptyset]$, namely $e_{\langle\rangle}=\{(j,\langle\rangle)\mid j<k\}$, so $M_k[\emptyset]$ consists of $k$ vertices with the only possible $k$-hyperedge.  Given $M_k[\mathcal{I}]$ and some new $I$ such that $\mathcal{I}\cup\{I\}$ is also subset-free, each vertex $(j,\tau)\in V_k[\mathcal{I}]$ corresponds to either one vertex $(j,\tau \ast)$ (when $j\in I$) or two vertices $(j,\tau 0)$ and $(j,\tau 1)$ (when $j\not\in I$).  Each edge $e_\sigma$ of $M_k[\mathcal{I}]$ corresponds to two edges of $M_k[\mathcal{I}\cup\{I\}]$, $e_{\sigma 0}$ and $e_{\sigma 1}$, with $e_{\sigma 0}\cap e_{\sigma 1}$ consisting of the $(j,\tau \ast)$ where $j\in I$.

\begin{lemma}
  $M_k[\mathcal{I}]$ is strongly $\mathcal{I}$-adapted.
\end{lemma}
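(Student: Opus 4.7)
The plan is to prove this by a direct calculation of the pairwise intersections of edges of $M_k[\mathcal{I}]$. Concretely, for distinct $\sigma,\sigma':\mathcal{I}\to\{0,1\}$, I will compute $\pa(e_\sigma\cap e_{\sigma'})$ and show it is contained in some $I\in\mathcal{I}$. Since every element of $\sh_E(e_\sigma)$ has the form $e_\sigma\cap e_{\sigma'}$ for some $\sigma'\neq\sigma$, this suffices to verify the strong-adaptedness condition $\pa(\sh_E(e_\sigma))\leq_s\mathcal{I}$.

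First I would unpack the definition. A vertex $(j,\tau)$ lies in $e_\sigma$ exactly when $\tau(I)\in\{\sigma(I),\ast\}$ for every $I\in\mathcal{I}$; but $\tau(I)=\ast$ is forced when $j\in I$ and $\tau(I)\in\{0,1\}$ when $j\notin I$, so the nontrivial constraint is simply $\tau(I)=\sigma(I)$ for every $I\in\mathcal{I}$ with $j\notin I$. Therefore $(j,\tau)\in e_\sigma\cap e_{\sigma'}$ if and only if $\sigma(I)=\sigma'(I)$ for every $I\in\mathcal{I}$ with $j\notin I$. Equivalently, writing $D(\sigma,\sigma')=\{I\in\mathcal{I}:\sigma(I)\neq\sigma'(I)\}$, this requires $D(\sigma,\sigma')\subseteq\{I\in\mathcal{I}:j\in I\}$, i.e.\ $j\in\bigcap_{I\in D(\sigma,\sigma')}I$. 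For each such $j$ the value $\tau$ is then uniquely determined ($\tau(I)=\ast$ for $I\ni j$ and $\tau(I)=\sigma(I)=\sigma'(I)$ otherwise), so
\[
\pa(e_\sigma\cap e_{\sigma'})=\bigcap_{I\in D(\sigma,\sigma')}I.
\]

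Now I would conclude: since $\sigma\neq\sigma'$, the set $D(\sigma,\sigma')$ is nonempty, so picking any $I_0\in D(\sigma,\sigma')\subseteq\mathcal{I}$ we get $\pa(e_\sigma\cap e_{\sigma'})\subseteq I_0\in\mathcal{I}$. Because the hypergraph is $k$-partite, $\pa$ is injective on each edge, so taking the $\pa$-image commutes with the maximal-element operation $(\cdot)^{\#}$; every element of $\pa(\sh_E(e_\sigma))$ therefore arises as $\pa(e_\sigma\cap e_{\sigma'})$ for some $\sigma'\neq\sigma$ and is contained in some member of $\mathcal{I}$. This gives $\pa(\sh_E(e_\sigma))\leq_s\mathcal{I}$ for every edge $e_\sigma$, which is the definition of strong $\mathcal{I}$-adaptedness.

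There is no real obstacle here; the only minor point requiring attention is the interaction between $\pa$ and the $(\cdot)^{\#}$ operation, but this is automatic from the $k$-partite structure. The proof is essentially a bookkeeping exercise once the explicit description of $e_\sigma\cap e_{\sigma'}$ is in hand, and it also matches the intuition given by the ``successive doubling'' description of $M_k[\mathcal{I}]$: when one adds a new $I$ to the subset-free collection, each edge is duplicated and the two copies meet precisely along the vertices indexed by coordinates $j\in I$.
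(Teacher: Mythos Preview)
Your proof is correct and follows essentially the same approach as the paper: both arguments reduce to showing that for $\sigma\neq\sigma'$ the intersection $e_\sigma\cap e_{\sigma'}$ has $\pa$-image contained in any $I\in\mathcal{I}$ on which $\sigma,\sigma'$ disagree. Your version is slightly more explicit, computing $\pa(e_\sigma\cap e_{\sigma'})=\bigcap_{I\in D(\sigma,\sigma')}I$ exactly and spelling out the $\pa$/$(\cdot)^{\#}$ compatibility, but the underlying argument is the same.
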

\begin{proof}
Fix some edge $e_\sigma$ of $M_k[\mathcal{I}]$ where $\sigma:\mathcal{I}\rightarrow\{0,1\}$.  Define
\[\sigma_j(I)=\left\{
  \begin{array}{ll}
    \sigma(I)&\text{if }j\not\in I\\
    \ast&\text{if }j\in I
  \end{array}\right.\]
so $e_\sigma=\{(j,\sigma_j)\mid j<k\}$.

Consider any other edge of $M_k[\mathcal{I}]$; this other edge corresponds to a $\sigma':\mathcal{I}\rightarrow\{0,1\}$ which is not $\sigma$, so there is an $I\in\mathcal{I}$ with $\sigma'(I)\neq \sigma(I)$.  $e_{\sigma'}$ is $\{(j,\sigma'_j)\mid j<k\}$ where $\sigma'_j$ is defined similarly to $\sigma_j$.

In order for $(j,\sigma_j)$ to belong to $e_{\sigma'}$, we must have $\sigma_j=\sigma'_j$, and therefore $\sigma_j(I)=\ast$, which means $j\in I$.  Therefore any $(j,\sigma_j)\in e_\sigma\cap e_{\sigma'}$ must satisfy $j\in I$.  Since for any $\sigma'\neq \sigma$ there is such an $I$, we have shown that $\sh_{M_k[\mathcal{I}]}(e_\sigma)\leq_s\mathcal{I}$.  This holds for any edge $e_\sigma$, so $M_k[\mathcal{I}]$ is strongly $\mathcal{I}$-adapted.
\end{proof}

\section{The Analytic Setting}\label{sec:analytic}

\subsection{Graded Probability Spaces}
We will be working in the setting of a \emph{graded probability space} as introduced by Keisler \cite{Keisler1}.  Informally, a graded probability space is an ordinary probability measure space augmented by measures on $\sigma$-algebras of $n$-tuples, where these $\sigma$-algebras of $n$-tuples may be larger than those given by the usual product construction.

\begin{definition}
A \emph{graded probability space} is a structure $(\Omega,\{\mathcal{B}_k\}_{k\in\mathbb{N}},\{\mu^k\}_{k\in\mathbb{N}})$ such that:
\begin{itemize}
\item For each $k$, $(\Omega^k,\mathcal{B}_k,\mu^k)$ is a probability measure space,
\item If $B\in\mathcal{B}_k$ then for any permutation $\pi$ of $[k]$
\[B^\pi=\{\vec x_{\pi^{-1}([k])}\mid \vec x_{[k]}\in B\},\]
belongs to $\mathcal{B}_k$ and $\mu^k(B^\pi)=\mu^k(B)$,
\item $\mathcal{B}_n\times\mathcal{B}_m\subseteq\mathcal{B}_{n\times m}$,
\item For any $B\in\mathcal{B}_{n+m}$ and $\vec x_{[n]}\in\Omega^n$, $B(\vec x_{[n]})\in\Omega^m$,
\item For any $B\in\mathcal{B}_{n+m}$, the function $g(\vec x_{[n]})=\mu^m(B(\vec x_{[n]}))$ is $\mathcal{B}_n$-measurable and $\int g(\vec x_{[n]})d\mu(\vec x_{[n]})=\mu^{n+m}(B)$.
\end{itemize}
\end{definition}

If we take any probability measure space $(\Omega,\mathcal{B},\mu)$ and set $\mathcal{B}_k=\prod_{i<k}\mathcal{B}$ and $\mu^k=\prod_{i<k}\mu$ then $(\Omega,\{\mathcal{B}_k\},\{\mu^k\})$ is always a graded probability space.  We will mostly be concerned, however, with graded probability spaces where $\mathcal{B}_k$ has additional sets.

If $V$ is any finite set of size $l$, a graded probability space $(\Omega,\{\mathcal{B}_l\}_{k\in\mathbb{N}},\{\mu^l\}_{l\in\mathbb{N}})$ induces a measure space $(\Omega^V,\mathcal{B}_V,\mu^V)$ by taking any bijection $\pi:V\rightarrow[l]$ and identifying a tuple $\vec x_V$ with $\vec x_{[l]}(i)=\vec x_V(\pi^{-1}(i))$.  Since $\mathcal{B}_k,\mu^k$ are invariant under permutations, the choice of bijection does not matter.

We adopt the convention that, unless otherwise stated, when we take an integral with respect to $\mu^V$, the variables are $\vec x_V$.

If we are interested in evaluating a particular function $f$, however, the choice of bijection will matter unless $f$ is symmetric.  The main case will be the following: $(V,E)$ is a finite $k$-uniform hypergraph, $\vec x_V$ is an embedding of $V$ into $\Omega$, $e\in E$, $f\in L^\infty(\mathcal{B}_k)$, and we would like to interpret the value $f(\vec x_e)$.   In order for this notion to make sense, we need to turn $\vec x_e$ into an ordered tuple, since $f$ is a function on ordered $k$-tuples $x_0,\ldots,x_{k-1}$.

When we are considering a $k$-partite $k$-uniform hypergraph, there is a canonical choice of order: if $V=V_0\cup\cdots\cup V_{k-1}$ then each edge $e$ contains exactly one vertex $e_i$ from each $V_i$, and we write $f(\vec x_e)$ for $f(x_{e_0},\ldots,x_{e_{k-1}})$.  When $f$ is symmetric, the value of $f(\vec x_e)$ does not depend on the ordering of $e$, so taking any order $e=\{e_0,\ldots,e_{k-1}\}$ we may take $f(\vec x_e)$ to be $f(x_{e_0},\ldots,x_{e_{k-1}})$.  We will only write $f(\vec x_e)$ when either $e$ is an edge in a $k$-partite graph or $f$ is symmetric.

When $H=(V_0,\ldots,V_{k-1},E)$ is a finite $k$-partite $k$-uniform hypergraph, it makes sense to ask about the density of embeddings of $H$ into a measurable $A\subseteq\mathcal{B}_k$, or more generally, weighted embeddings given by an arbitrary function:
\begin{definition}
  We say $f\in L^\infty(\mathcal{B}_k)$ is \emph{symmetric} if for every permutation $\pi:[k]\rightarrow[k]$, $f\circ \pi=f$.  We say $A\in\mathcal{B}_k$ is symmetric if $\chi_A$ is.

Suppose that either $V=\bigcup_{i<k}V_i$, $H=(V_0,\ldots,V_{k-1},E)$ is a finite $k$-partite $k$-uniform hypergraph and $f\in L^\infty(\mathcal{B}_k)$, or $H=(V,E)$ and $f\in L^\infty(\mathcal{B}_k)$ is symmetric; then we define
\[t_H(f)=\int \prod_{e\in E}f(\vec x_e) d\mu^V.\]
When $A\in\mathcal{B}_k$, $t_H(A)=t_H(\chi_A)$.
\end{definition}
$t_H(A)$ is the probability that a randomly selected embedding of $V$ into $\Omega$ is a homomorphism.

\subsection{$\sigma$-Algebras}
We wish to work in certain sub-$\sigma$-algebras of $\mathcal{B}_k$; each of these $\sigma$-algebras corresponds to a notion of quasirandomness.

\begin{definition}
  Let $\mathcal{I}$ be subset-free on $[k]$.  $\mathcal{B}_{k,\mathcal{I}}$ is the sub-$\sigma$-algebra of $\mathcal{B}_k$ generated by sets of the form
\[B=\{\vec x_{[k]}\mid\text{for each }I\in\mathcal{I},\ \vec x_I\in B_I\}\]
where each $B_I\in\mathcal{B}_{I}$.

We write $\mathcal{B}_{k,n}$ for $\mathcal{B}_{k,{[k]\choose n}}$.
\end{definition}

For example $\mathcal{B}_{k,1}=\mathcal{B}_1\times\cdots\times\mathcal{B}_1$.  (Thus our need to consider graded probability spaces: if we only looked at the product, $\mathcal{B}_{k,1}$ would already contain all our sets; as we'll see, $\mathcal{B}_{k,1}$ is actually the smallest $\sigma$-algebra we want to consider.)

More generally, when $\mathcal{I}$ is a partition, $\mathcal{B}_{m,\mathcal{I}}$ is a product.  When $\mathcal{I}$ is not a partition, like ${[k]\choose 2}$, we need something more complicated than a product.  For instance, $\mathcal{B}_{3,2}$ is generated by sets of the form
\[\{(x,y,z)\mid (x,y)\in B_0, (x,z)\in B_1, (y,z)\in B_2\}.\]

Note that the $\mathcal{B}_{k,\mathcal{I}}$ will not be symmetric if $\mathcal{I}$ is not.  For instance, consider the partition $\mathcal{I}=\{\{0,1\},\{2\}\}$: $\mathcal{B}_{3,\mathcal{I}}=\mathcal{B}_2\times\mathcal{B}_1$, but if we close $\mathcal{B}_{3,\mathcal{I}}$ under permutations and then close it to a $\sigma$-algebra, we end up with $\mathcal{B}_{3,2}$.

(In general, the projection of a symmetric function onto $\mathcal{B}_{k,\mathcal{I}}$ is not really the right object to consider: we should instead consider the linear subspace of $L^2(\mathcal{B}_k)$ generated by closing $L^2(\mathcal{B}_{k,\mathcal{I}})$ under permutations and sums.  However the projection onto this larger space is constant if and only if the projection onto $\mathcal{B}_{k,\mathcal{I}}$ is constant, so in this paper we may safely ignore this complication.)

When $\bigcup\mathcal{I}$ is not $[k]$---that is, some $i<k$ is entirely missing from the $I\in\mathcal{I}$---this notion still makes sense.  For concreteness, consider the subset-free $\mathcal{I}=\{\{1\},\{2\}\}$ on $[3]$.  If $f$ is $\mathcal{B}_{3,\mathcal{I}}$-measurable then there is a function $f'$ which is $\mathcal{B}_{2,\mathcal{I}}$ measurable so that $f(\vec x_{[3]})=f'(\vec x_{[2]})$ for all $\vec x_{[3]}$.  Therefore we can usually reduce our arguments to the case where $\bigcup\mathcal{I}=[k]$.

\begin{lemma}
If $\mathcal{I}\leq_s\mathcal{J}$ then $B_{k,\mathcal{I}}\subseteq B_{k,\mathcal{J}}$.
\end{lemma}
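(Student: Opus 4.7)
The plan is to prove containment by checking generators. Since $\mathcal{B}_{k,\mathcal{I}}$ is generated by sets of the form $B=\{\vec x_{[k]}\mid \forall I\in\mathcal{I},\ \vec x_I\in B_I\}$ with $B_I\in\mathcal{B}_I$, and $\mathcal{B}_{k,\mathcal{J}}$ is a $\sigma$-algebra, it suffices to show every such generator lies in $\mathcal{B}_{k,\mathcal{J}}$. Moreover, every such generator is a finite intersection
\[B=\bigcap_{I\in\mathcal{I}}\{\vec x_{[k]}\mid \vec x_I\in B_I\},\]
so it is even enough to show that each single-$I$ ``cylinder'' $\{\vec x_{[k]}\mid\vec x_I\in B_I\}$ belongs to $\mathcal{B}_{k,\mathcal{J}}$.

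Fix such an $I\in\mathcal{I}$ and $B_I\in\mathcal{B}_I$. By hypothesis $\mathcal{I}\leq_s\mathcal{J}$, so we may pick some $J\in\mathcal{J}$ with $I\subseteq J$. The first step is to lift $B_I$ to a set at level $J$: using the graded probability space axiom $\mathcal{B}_n\times\mathcal{B}_m\subseteq\mathcal{B}_{n+m}$ together with permutation invariance, the set
\[\tilde B_J=\{\vec x_J\mid\vec x_I\in B_I\}\]
belongs to $\mathcal{B}_J$ (it is, up to reindexing of coordinates, $B_I\times\Omega^{J\setminus I}$).

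Now assemble a defining datum for $\mathcal{B}_{k,\mathcal{J}}$: put $B'_J=\tilde B_J$ and, for every other $J'\in\mathcal{J}$ with $J'\neq J$, put $B'_{J'}=\Omega^{J'}\in\mathcal{B}_{J'}$. The generator of $\mathcal{B}_{k,\mathcal{J}}$ built from this datum is exactly
\[\{\vec x_{[k]}\mid \forall J'\in\mathcal{J},\ \vec x_{J'}\in B'_{J'}\}=\{\vec x_{[k]}\mid \vec x_J\in\tilde B_J\}=\{\vec x_{[k]}\mid\vec x_I\in B_I\},\]
so the cylinder is in $\mathcal{B}_{k,\mathcal{J}}$, as required.

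The whole argument is essentially a bookkeeping exercise; the only substantive point is the use of the product axiom to guarantee that conditions imposed on a subset of coordinates still define a measurable set at the larger index, and this is exactly what the definition of a graded probability space was set up to give. There is no real obstacle; the one thing to be mindful of is the case where $\bigcup\mathcal{J}\subsetneq[k]$, but this causes no difficulty since the coordinates outside $\bigcup\mathcal{J}$ are simply unconstrained by both sides.
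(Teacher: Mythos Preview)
Your proof is correct; the paper states this lemma without proof, treating it as immediate from the definitions, and your argument is exactly the routine verification one would supply.
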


Elements of some $\mathcal{B}_{k,\mathcal{I}}$ are in some sense ``structured'', and when $\mathcal{I}\leq_s\mathcal{J}$, elements of $\mathcal{B}_{k,\mathcal{I}}$ have a simpler structure than those in $\mathcal{B}_{k,\mathcal{J}}$.  Our view of randomness is the converse: a hypergraph $A$ is quasirandomness relative to $\mathcal{I}$ if $\mathcal{B}_{k,\mathcal{I}}$ provides no information about $A$.

Formally, we look at the expectation of $\chi_A$ with respect to the $\sigma$-algebra $\mathcal{B}_{k,\mathcal{I}}$.  We cannot expect this to be $0$, since the integral of the expectation must be the measure of $A$.  However we can ask for the expectation to be as simple as possible given that constraint: $A$ is random if its expectation is a constant function.

\begin{definition}
  We say $f\in L^2(\mathcal{B}_k)$ is \Disc{p}{\mathcal{I}} if $\mathbb{E}(f\mid\mathcal{B}_{k,\mathcal{I}})$ is the function constantly equal to $p$.  We say $E$ is \Disc{p}{\mathcal{I}} if $\chi_E$ is.

We write \Disc{p}{n} for \Disc{p}{{[k]\choose n}}.
\end{definition}

We view each \Disc{p}{\mathcal{I}} as a notion of randomness.  As we will describe in the next section, the known notions of quasirandomness can by recovered with suitable choices of $\mathcal{I}$.

We mention here one family of weak quasirandomness notions not discussed below: the case where $\mathcal{I}$ is a singleton.  When $\mathcal{I}$ is a singleton $I$, $M_k[\mathcal{I}]$ is the hypergraph consisting of two hyperedges which intersect on a set of size $I$.  It is easy to check that a hypergraph $A$ with $\mu^k(A)=p$ has $p^2$ copies of $M_k[\mathcal{I}]$ exactly if almost every set of size $|I|$ belongs to the correct number of edges (that is, for almost every $\vec x_I$, the measure of the set of $\vec x_{[k]\setminus I}$ such that $\vec x$ is an edge is $p$).

\subsection{Symmetry}

A hypergraph fails to be \Disc{p}{\mathcal{I}} if it correlates with a \emph{directed} hypergraph of the right kind.  Since our primary interest is symmetric subsets of $\Omega^k$, the notions of randomness we want to work with should actual focus on symmetric analogs of \Disc{p}{\mathcal{I}}.  Fortunately, these turn out to be equivalent: we will show that if $f$ fails to be \Disc{0}{\mathcal{I}} then there is a directed hypergraph measurable with respect to $\mathcal{B}_{k,\mathcal{I}}$ with the property that $f$ correlates with the underlying undirected hypergraph.

\begin{definition}
 Let $\mathcal{I}$ be subset-free on $W$ with $|W|=k$ and for each $I\in\mathcal{I}$, let $H_I$ be a symmetric element of $\mathcal{B}_I$.  Define
\[\mathcal{K}_k(\{H_I\}_{I\in\mathcal{I}})=\{\vec x_{[k]}\mid\exists \pi:[k]\rightarrow W\text{ a bijection such that }\forall I\in\mathcal{I}\ \vec x_{\pi^{-1}(I)}\in H_I\}.\]
\end{definition}
Clearly $\mathcal{K}_k(\{H_I\}_{I\in\mathcal{I}})$ is symmetric (i.e., a hypergraph).

\begin{lemma}\label{thm:symmetrizing}
Suppose $f\in L^\infty(\mathcal{B}_k)$ is symmetric and not \Disc{0}{\mathcal{I}}.  Additionally, suppose $(\Omega,\mathcal{B},\mu)$ is non-atomic.  Then there exist pairwise disjoint sets $\{S_i\}_{i<k}$ and for each $I\in\mathcal{I}$, $C_I\subseteq\prod_{i\in I}S_i$, so that if we take $H_I$ to be the closure of $C_I$ under permutations,
\[\left|\int_{\mathcal{K}_k(\{H_{I}\}_{I\in\mathcal{I}})} f\ d\mu^k\right|>0.\]
\end{lemma}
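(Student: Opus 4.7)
My strategy is to extract a non-symmetric ``rectangle'' $B$ witnessing the failure of \Disc{0}{\mathcal{I}}, restrict it to disjoint sections of $\Omega$ using non-atomicity, symmetrize to obtain the $H_I$, and then show that $\mathcal{K}_k(\{H_I\})$ splits as $k!$ coordinate-relabeled copies of $B'=B\cap\prod_i S_i$. Throughout I assume $\bigcup\mathcal{I}=[k]$ (the reduction when it is not is indicated in Section~\ref{sec:analytic}).

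Since $f$ is not \Disc{0}{\mathcal{I}}, some element of $\mathcal{B}_{k,\mathcal{I}}$ has nonzero $f$-integral. The sets $\{\vec x_{[k]}:\vec x_I\in B_I\;\forall I\in\mathcal{I}\}$ with $B_I\in\mathcal{B}_I$ form a $\pi$-system containing $\Omega^k$, so by the $\pi$-$\lambda$ theorem we may take $B$ of this special form with data $\{B_I\}_{I\in\mathcal{I}}$ and $\int_B f\,d\mu^k\neq 0$. Using non-atomicity of $(\Omega,\mu)$, partition $\Omega$ into $N$ pieces $T_0,\ldots,T_{N-1}$ of equal measure. Decomposing
\[\int_B f\,d\mu^k=\sum_{\vec i\in[N]^k}\int_{B\cap\prod_j T_{i_j}}f\,d\mu^k,\]
the product-measure identity $\mu^k(\prod_j A_j)=\prod_j\mu(A_j)$, which is built into a graded probability space, yields $\mu^k\{\vec x:x_{j_1},x_{j_2}\in\text{same }T_l\}=1/N$ for any $j_1\neq j_2$; hence the total mass of ``repeat-index'' tuples $\vec i$ is $O(1/N)$ and, for $N$ sufficiently large, some injective $\vec i$ satisfies $|\int_{B\cap\prod_j T_{i_j}}f|>0$. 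Set $S_j=T_{i_j}$ (pairwise disjoint), $C_I=B_I\cap\prod_{i\in I}S_i$, and $B'=B\cap\prod_j S_j=\{\vec x:\vec x_I\in C_I\;\forall I\}$, so $\int_{B'}f\neq 0$.

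Let $H_I=\bigsqcup_{\sigma\in\operatorname{Sym}(I)}\sigma\cdot C_I$; the union is disjoint because $\sigma\cdot C_I\subseteq\prod_{i\in I}S_{\sigma^{-1}(i)}$ and the $S_i$ are disjoint, and $H_I$ is symmetric by construction. The core claim is that $\mathcal{K}_k(\{H_I\})\subseteq\bigsqcup_{\tau\in\operatorname{Sym}([k])}\prod_i S_{\tau(i)}$, and that within each permutation sector $\mathcal{K}_k$ coincides with the $\tau$-translate of $B'$. For the first half, suppose $\vec x\in\mathcal{K}_k(\{H_I\})$ with witness bijection $\pi:[k]\to[k]$; each $I$ then supplies $\sigma_I\in\operatorname{Sym}(I)$ with $x_{\pi^{-1}(i)}\in S_{\sigma_I^{-1}(i)}$ for $i\in I$. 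Disjointness of the $S_l$ forces the $\sigma_I^{-1}$ to agree on every overlap $I_1\cap I_2$, and because $\bigcup\mathcal{I}=[k]$ they glue to a map $\rho^{-1}:[k]\to[k]$ that restricts to a permutation of every $I\in\mathcal{I}$; surjectivity of $\rho^{-1}$ (from $\bigcup_I\rho^{-1}(I)=\bigcup_I I=[k]$) makes $\rho^{-1}$ globally bijective, so $\vec x\in\prod_i S_{(\rho^{-1}\pi)(i)}$. For the second half, in a permutation sector $\tau$ the defining conditions of $\mathcal{K}_k$ collapse (take $\rho=\operatorname{id}$, $\pi=\tau$, $\sigma_I=\operatorname{id}_I$) to $\vec x\circ\tau^{-1}|_I\in C_I$ for every $I$, equivalently $\vec y:=\vec x\circ\tau^{-1}\in B'$. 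The map $\vec x\leftrightarrow\vec y$ is a measure-preserving coordinate permutation, and by symmetry of $f$,
\[\int_{\mathcal{K}_k\cap\prod_i S_{\tau(i)}}f\,d\mu^k=\int_{B'}f\,d\mu^k,\]
so summing over $\tau\in\operatorname{Sym}([k])$ gives $\int_{\mathcal{K}_k(\{H_I\}_{I\in\mathcal{I}})}f\,d\mu^k=k!\int_{B'}f\,d\mu^k\neq 0$.

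\textbf{The hard part.} The crux is the gluing argument used to rule out non-permutation sectors: the locally defined $\sigma_I^{-1}$ must assemble into a global bijection of $[k]$. This uses both disjointness of the $S_i$ (so the local pieces agree on overlaps) and $\bigcup\mathcal{I}=[k]$ (so every coordinate is pinned down). Without either hypothesis, non-permutation sectors could contribute and potentially cancel the main $k!\int_{B'}f$ term. The refinement step is comparatively routine but still relies on the product-measure identity on product sets to make the repeat-index mass vanish as $N\to\infty$.
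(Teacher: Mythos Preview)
Your proof is correct and follows essentially the same route as the paper: extract a generating rectangle $B=\bigcap_I\{\vec x_I\in B_I\}$ witnessing non-\Disc{0}{\mathcal{I}}, use non-atomicity to pass to a cell $B'=B\cap\prod_i S_i$ with pairwise disjoint $S_i$, symmetrize the $C_I=B_I\cap\prod_{i\in I}S_i$, and identify $\mathcal{K}_k(\{H_I\})$ with the disjoint union of the $k!$ coordinate-permuted copies of $B'$. Your gluing argument (patching the local permutations $\sigma_I^{-1}$ into a global bijection $\rho^{-1}$ using disjointness of the $S_i$ and $\bigcup\mathcal{I}=[k]$) is in fact a slightly more explicit version of the paper's verification that $\mathcal{K}_k(\{H_I\})=C$.
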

\begin{proof}
First we deal with the possibility that $\bigcup\mathcal{I}\neq[k]$.  Suppose there is some $i<k$ such that no $I\in\mathcal{I}$ has $i\in I$.  Then we could take $f'$ to be the projection of $f$ onto $\mathcal{B}_{[k]\setminus\{i\}}$, and since $f$ is not \Disc{0}{\mathcal{I}}, also $f'$ is not \Disc{0}{\mathcal{I}} (since $\mathcal{I}$ is also subset-free on $[k]\setminus\{i\}$).  Then we could apply the claim to $f'$, obtaining $H_I$ with the desired property, and these $H_I$ would also witness the statement for $f$.

So we may assume that for each $i<k$, there is some $I\in\mathcal{I}$ with $i\in I$.

Since $f$ is not $\Disc{0}{\mathcal{I}}$, we may fix $B_I\in\mathcal{B}_I$ so that, setting $B=\{\vec x_{[k]}\mid\text{for each }I\in\mathcal{I},\ \vec x_I\in B_I\}$, $\left|\int f\chi_Bd\mu^k\right|>0$.

  Since $(\Omega,\mathcal{B},\mu)$ is non-atomic, we may partition $\Omega=\bigcup_{j\leq D}R_j$ with $\mu(R_j)=1/D$ so $1/D$ is much smaller than $\left|\int f\prod_I\chi_{B_I}d\mu^k\right|$.  Define 
\[W=\{e\in\Omega^k\mid\forall j\leq d\,|e\cap R_j|\leq 1\}.\]
That is, $W$ is those edges which span exactly $k$ of the $R_j$.  By choosing $D$ sufficiently large, we can ensure that $\mu^k(W)$ is very close to $1$, and so
\[\left|\int f\chi_{B\cap W}d\mu^k\right|>0.\]

For any $x\in\Omega$, define $R(x)$ to be the unique $i\leq D$ with $x\in R_i$.  Consider any injective map $\rho:[k]\rightarrow[D]$.  Define $C_\rho$ to be those $k$-tuples $\vec x_{[k]}\in B$ such that $\rho(i)=R(x_i)$ for each $i<k$.  Observe that if $\vec x_{[k]}\in B\cap W$ then there is exactly one $\rho$ such that $\vec x_{[k]}\in C_\rho$.  Therefore there is some $\rho$ with
\[\left|\int f\chi_{C_\rho}d\mu^k\right|>0.\]

Fix such a $\rho$.  For any permutation $\pi:[k]\rightarrow[k]$, $C_{\rho\circ\pi}=C_\rho^\pi$, and by the injectivity of $\rho$, $C_{\rho\circ\pi}\cap C_{\rho\circ\pi'}=\emptyset$ when $\pi,\pi'$ are distinct permutations.  By the symmetry of $f$, $\int f\chi_{C_\rho}d\mu^k=\int f\chi_{C_{\rho\circ\pi}}d\mu^k$, so setting $C=\bigcup_\pi C_{\rho\circ\pi}$,
\[\left|\int f\chi_Cd\mu^k\right|>0.\]
It remains to choose sets $H_I$ so that $C=\mathcal{K}_k(\{H_I\}_{I\in\mathcal{I}})$.

We take $S_i=R_{\rho(i)}$, $C_I=B_I\cap\prod_{i\in I}S_i$, and let $H_I$ be the closure of $C_I$ under permutations.

Since $\mathcal{K}_k(\{H_I\}_{I\in\mathcal{I}})$ is closed under permutations, to show that $C\subseteq\mathcal{K}_k(\{H_I\}_{I\in\mathcal{I}})$ it suffices to show that $C_\rho\subseteq \mathcal{K}_k(\{H_I\}_{I\in\mathcal{I}})$.  If $\vec x_{[k]}\in C_\rho$ then for each $I\in\mathcal{I}$, take $\pi:I\rightarrow I$ to be the identity, so $x_i\in R_{\rho(\pi(i))}=S_i$ and $\vec x_I\in B_I$, and therefore $\vec x_I\in H_I$.  Therefore $\vec x_{[k]}\in\mathcal{K}_k(\{H_I\}_{I\in\mathcal{I}})$.  Therefore $C\subseteq\mathcal{K}_k(\{H_I\}_{I\in\mathcal{I}})$.

For the converse inclusion, consider some $\vec x_{[k]}\in\mathcal{K}_k(\{H_{I}\}_{I\in\mathcal{I}})$, so there is a $\pi:[k]\rightarrow[k]$ such that each $\vec x_{\pi^{-1}(I)}\in H_I$.  We claim that for each $i<k$, there is exactly one $i'$ such that $x_{i'}\in S_i$: if not, there would be an $i<k$ with no such $i'$.  But taking some $I\in\mathcal{I}$ with $i\in I$, it would not be possible to have $\vec x_{\pi^{-1}(I)}\in H_I$ in this case.  Therefore there is a permutation $\vec x'_{[k]}$ of $\vec x_{[k]}$ so that $x'_i\in S_i$ for each $i<k$ and $\vec x'_I\in H_I$ for each $I\in\mathcal{I}$.  Then $\vec x'_I\in B_I$, so $\vec x'_{[k]}\in C_\rho$ and therefore $\vec x_{[k]}\in C$.

Each $H_I$ is symmetric and $C=\mathcal{K}_k(\{H_I\}_{I\in\mathcal{I}})$, completing the proof.
\end{proof}

We are now in a position to state the infinitary versions of our main result:
\begin{theorem}\label{thm:main_inf}
Let $f\in L^\infty(\mathcal{B}_k)$ be symmetric and $\mathcal{I}$ subset-free on $[k]$.  Then the following are equivalent:
  \begin{enumerate}
  \item $f$ is \Disc{p}{\mathcal{I}},
  \item For every $\mathcal{I}$-adapted $k$-uniform hypergraph $H=(V,E)$, $t_H(f)=p^{|E|}$,
  \item $t_{M_k[\mathcal{I}]}(f)=p^{2^{|\mathcal{I}|}}$.
  \end{enumerate}
\end{theorem}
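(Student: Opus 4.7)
The plan is to prove the cycle $(1) \Rightarrow (2) \Rightarrow (3) \Rightarrow (1)$. The implication $(2) \Rightarrow (3)$ is immediate from the preceding lemma: $M_k[\mathcal{I}]$ is strongly $\mathcal{I}$-adapted, hence $\mathcal{I}$-adapted, and its edges are indexed by the functions $\sigma : \mathcal{I} \to \{0,1\}$, so $|E(M_k[\mathcal{I}])| = 2^{|\mathcal{I}|}$; applying $(2)$ with $H = M_k[\mathcal{I}]$ yields $(3)$.

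For $(1) \Rightarrow (2)$, write $f = p + g$ where by hypothesis $\mathbb{E}(g \mid \mathcal{B}_{k,\mathcal{I}}) = 0$, and expand
\[ t_H(f) = \sum_{S \subseteq E} p^{|E|-|S|} \int \prod_{e \in S} g(\vec x_e) \, d\mu^V. \]
The $S = \emptyset$ term contributes $p^{|E|}$, so it suffices to show the integral vanishes for every nonempty $S$. Any $S \subseteq E$ is itself $\mathcal{I}$-adapted under the inherited ordering: if $e \in S$, every element of $\sh_{S \setminus \{e\}}(e)$ is of the form $e \cap e'$ with $e' \in S$, and is therefore contained in some element of $\sh_{E \setminus \{e\}}(e)$, so the relation $\leq \mathcal{I}$ transfers with the same bijection. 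Let $e$ be the last edge of $S$ and let $\pi : [k] \to e$ witness $\sh_{S \setminus \{e\}}(e) \leq \mathcal{I}$. For each $e' \in S \setminus \{e\}$ there is $J_{e'} \in \mathcal{I}$ with $e \cap e' \subseteq \pi(J_{e'})$, so $\prod_{e' \in S \setminus \{e\}} g(\vec x_{e'})$, viewed as a function of $\vec x_e$ after transport along $\pi$, is $\mathcal{B}_{k,\mathcal{I}'}$-measurable for $\mathcal{I}' = \{J_{e'} : e' \in S \setminus \{e\}\}$. Since $\mathcal{I}' \leq_s \mathcal{I}$ gives $\mathcal{B}_{k,\mathcal{I}'} \subseteq \mathcal{B}_{k,\mathcal{I}}$ by the stated lemma, the tower property yields $\mathbb{E}(g \mid \mathcal{B}_{k,\mathcal{I}'}) = 0$, and integrating $\vec x_e$ first annihilates the contribution.

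For $(3) \Rightarrow (1)$, I would exploit the inductive doubling description of $M_k[\mathcal{I}]$ recorded after its definition: enlarging $\mathcal{J}$ to $\mathcal{J} \cup \{I\}$ duplicates every vertex outside $I$ and pairs each edge, which is exactly the combinatorial content of one Cauchy--Schwarz step in the coordinates indexed by $[k] \setminus I$. Iterating over $\mathcal{I}$ yields a Gowers-type seminorm $\|f\|_\mathcal{I}$ with $\|f\|_\mathcal{I}^{2^{|\mathcal{I}|}} = t_{M_k[\mathcal{I}]}(f)$, along with non-negativity of $t_{M_k[\mathcal{I}]}$ on real symmetric $f$ and a Gowers--Cauchy--Schwarz inequality
\[ \left| \int \prod_\sigma f_\sigma(\vec x_{e_\sigma}) \, d\mu^V \right| \leq \prod_\sigma \|f_\sigma\|_\mathcal{I}. \]
The essential step is the equivalence $\|f\|_\mathcal{I} = 0 \Leftrightarrow \mathbb{E}(f \mid \mathcal{B}_{k,\mathcal{I}}) = 0$. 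The ``if'' direction follows by applying $(1) \Rightarrow (2)$ with $p = 0$ to $H = M_k[\mathcal{I}]$; the ``only if'' comes from tracking the equality case in the iterated Cauchy--Schwarz, using Lemma \ref{thm:symmetrizing} to reduce testing $f$ against arbitrary $\mathcal{B}_{k,\mathcal{I}}$-generators to testing against the symmetric sets $\mathcal{K}_k(\{H_I\})$ produced there. Granted this, decompose $f = h + g$ with $h = \mathbb{E}(f \mid \mathcal{B}_{k,\mathcal{I}})$; Gowers--Cauchy--Schwarz kills every term in the expansion of $t_{M_k[\mathcal{I}]}(f)$ containing a $g$-factor, so $t_{M_k[\mathcal{I}]}(f) = t_{M_k[\mathcal{I}]}(h)$. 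A Jensen-type inequality obtained by iterating the same doubling, comparing $h$ to its mean at each stage, then gives $t_{M_k[\mathcal{I}]}(h) \geq (\int h \, d\mu^k)^{2^{|\mathcal{I}|}}$ for $h \geq 0$, with equality only when $h$ is a.e.\ constant; combined with the hypothesis $t_{M_k[\mathcal{I}]}(h) = p^{2^{|\mathcal{I}|}}$ this forces $h \equiv p$.

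The hard part will be the null-space characterization $\|f\|_\mathcal{I} = 0 \Leftrightarrow \mathbb{E}(f \mid \mathcal{B}_{k,\mathcal{I}}) = 0$ for a general subset-free $\mathcal{I}$. When $\mathcal{I}$ is a partition, $\mathcal{B}_{k,\mathcal{I}}$ is a product $\sigma$-algebra and $\|\cdot\|_\mathcal{I}$ is a classical Gowers box-norm where this equivalence is standard; for general subset-free $\mathcal{I}$ the generators of $\mathcal{B}_{k,\mathcal{I}}$ overlap nontrivially, and the inductive doubling argument requires careful bookkeeping to match each Cauchy--Schwarz step to a projection onto a piece of $\mathcal{B}_{k,\mathcal{I}}$. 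This is where the symmetry hypothesis and the apparatus of Lemma \ref{thm:symmetrizing} become essential, and I expect the bulk of the technical work in Sections \ref{sec:counting} and \ref{sec:seminorms} to be devoted to this.
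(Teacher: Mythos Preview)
Your $(1)\Rightarrow(2)$ and $(2)\Rightarrow(3)$ are correct and match the paper (the paper does $(1)\Rightarrow(2)$ by direct induction on $|E|$ rather than by expanding $f=p+g$, but these are equivalent). For $(3)\Rightarrow(1)$ you have the right architecture---the seminorm, Gowers--Cauchy--Schwarz, and the null-space characterization $\|g\|_{M_k[\mathcal{I}]}=0\Leftrightarrow\mathbb{E}(g\mid\mathcal{B}_{k,\mathcal{I}})=0$---but you have inverted the location of the difficulty. The null-space characterization is short and does \emph{not} use Lemma~\ref{thm:symmetrizing}: the paper proves a monotonicity lemma $\|f\chi_B\|_{M_k[\mathcal{I}]}\le\|f\|_{M_k[\mathcal{I}]}$ for $B\in\mathcal{B}_{k,I}$ (via a positivity argument on the expansion $f=f\chi_B+f\chi_{\overline B}$), and combines it with Corollary~\ref{thm:positivity} to get the contrapositive directly.

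The real gap is your final ``Jensen equality'' step. After reducing to $t_{M_k[\mathcal{I}]}(h)=p^{2^{|\mathcal{I}|}}$ for $h=\mathbb{E}(f\mid\mathcal{B}_{k,\mathcal{I}})$, you assert that equality in the iterated Cauchy--Schwarz forces $h$ constant. But tracking equality only tells you that various slice-averages of $h$ are constant; for overlapping $\mathcal{I}$ it is not clear how to assemble these into $h\equiv p$. Equivalently, in the expansion $t_{M_k[\mathcal{I}]}(p+(h-p))$ the cross terms $\langle\{h_\sigma\}\rangle$ with $h_\sigma\in\{1,h-p\}$ do \emph{not} vanish by Gowers--Cauchy--Schwarz alone, since $\|h-p\|_{M_k[\mathcal{I}]}$ is not known to be zero. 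The paper closes this by an induction on $|\mathcal{I}|$: a monotonicity lemma (Lemma~\ref{thm:upgrade}) shows that $\|f\|_{M_k[\mathcal{I}]}=p=\int f$ forces $\|f\|_{M_k[\mathcal{I}']}=p$ for every $\mathcal{I}'\subseteq\mathcal{I}$, so by induction $h-p$ is $\Disc{0}{\mathcal{I}'}$ for all proper $\mathcal{I}'\subsetneq\mathcal{I}$; a separate ``mixed terms'' lemma then shows that this weaker hypothesis suffices to kill every cross term, yielding $p^{2^{|\mathcal{I}|}}=p^{2^{|\mathcal{I}|}}+\|h-p\|_{M_k[\mathcal{I}]}^{2^{|\mathcal{I}|}}$ and hence $h\equiv p$ via the null-space characterization. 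Your two-way decomposition is not enough; the paper's three-way split $p+(h-p)+g$ together with the induction is where the content lies.
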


This will be proven in pieces: $(1)\Rightarrow (2)$ is Theorem \ref{thm:counting}, $(2)\Rightarrow (3)$ is immediate since $M_k[\mathcal{I}]$ is $\mathcal{I}$-adapted, and $(3)\Rightarrow (1)$ is Theorem \ref{thm:mk}.

\section{Ultraproducts}\label{sec:ultraproducts}

We relate the analytic setting described in the previous section to sequences of hypergraphs by using the ultraproduct construction.  Essentially, given a sequence of hypergraphs, this construction produces a graded probability space which is a limit of the given sequence.

Rather than reiterate that development here, we state a theorem which encapsulates all needed properties of the construction and refer the reader to \cite{goldbring:_approx_logic_measure} for a proof and a detailed exposition of the technique.
\begin{theorem}\label{thm:ultraproduct}
  Let $\{V_n\}$ be a sequence of finite sets with $|V_n|\rightarrow\infty$.  For any infinite set $S\subseteq\mathbb{N}$, there exist:
  \begin{itemize}
  \item A graded probability space $(\Omega,\{\mathcal{B}_k\},\{\mu^k\})$, and
  \item For 
every sequence of sets $\langle A_n\rangle$ with each $A_n\subseteq V_n^k$, a set $\lim\langle A_n\rangle=A\subseteq\Omega^k$ in $\mathcal{B}_k$,
  \end{itemize}
so that:
\begin{itemize}
\item Each $\mathcal{B}_k$ is generated by sets of the form $\lim\langle A_n\rangle$,
\item The operation $\lim$ commutes with union, intersection, and complement, so $\lim\langle A_n\cap B_n\rangle=\lim\langle A_n\rangle\cap\lim\langle B_n\rangle$ and similarly for $\cup$ and complement,
\item Given finitely many sequences $\langle A_{1,n}\rangle,\ldots,\langle A_{r,n}\rangle$ with each $A_{i,n}\subseteq V_n^k$ and setting $A_i=\lim\langle A_{i,n}\rangle$, there is a set $S'\subseteq S$ such that
\[\lim_{n\in S'}\frac{\left|\bigcap_{i\leq r}A_{i,n}\right|}{|V_n|^k}=\mu^k(\bigcap_{i\leq r}A_i).\]
\end{itemize}
\end{theorem}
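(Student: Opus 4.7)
The plan is to build a graded probability space via the ultraproduct/Loeb construction. First I would fix a non-principal ultrafilter $\mathcal{U}$ on $\mathbb{N}$ containing $S$; since $S$ is infinite, the filter generated by $\{S\}$ together with the cofinite filter is proper, and Zorn's lemma extends it to an ultrafilter. Set $\Omega := \prod_n V_n / \mathcal{U}$, the set of equivalence classes $[\langle v_n\rangle]$ of sequences $v_n \in V_n$ under agreement on a set in $\mathcal{U}$. Identify $\Omega^k$ with $\prod_n V_n^k / \mathcal{U}$ coordinatewise. For each sequence $\langle A_n\rangle$ with $A_n\subseteq V_n^k$, let $\lim\langle A_n\rangle$ be the set of classes $[\langle \vec v_n\rangle]$ with $\{n:\vec v_n\in A_n\}\in\mathcal{U}$. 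A direct unpacking of the ultrafilter definition then shows $\lim$ commutes with finite unions, intersections, and complements.

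Second, I would define a finitely additive set function on the Boolean algebra of internal sets by $\mu_0^k(\lim\langle A_n\rangle) := \lim_{\mathcal{U}} |A_n|/|V_n|^k$. Independence of representative and finite additivity are routine consequences of ultrafilter limits respecting finite boolean operations on $[0,1]$. The key analytic step is to upgrade $\mu_0^k$ to a countably additive measure $\mu^k$ on the $\sigma$-algebra $\mathcal{B}_k$ generated by the internal sets. This is the Loeb construction: $\aleph_1$-saturation of the ultraproduct forces any countable disjoint union of internal sets whose union is internal to be essentially a finite union, so $\mu_0^k$ is trivially countably additive on the internal algebra, and Carathéodory extension yields $\mu^k$. (One may complete with respect to $\mu^k$-null sets without affecting what follows.)

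Third, I would verify the axioms of a graded probability space. Permutation invariance at the finite level, $|A_n^\pi|=|A_n|$, transfers through the ultralimit. The inclusion $\mathcal{B}_n\times\mathcal{B}_m\subseteq\mathcal{B}_{n+m}$ follows from the identity $\lim\langle A_n\rangle\times\lim\langle B_n\rangle = \lim\langle A_n\times B_n\rangle$ on internal sets together with a monotone class argument. The main obstacle is the Fubini axiom: for $B\in\mathcal{B}_{n+m}$, the slice $B(\vec x_{[n]})$ must lie in $\mathcal{B}_m$, the function $\vec x_{[n]}\mapsto\mu^m(B(\vec x_{[n]}))$ must be $\mathcal{B}_n$-measurable, and its integral must recover $\mu^{n+m}(B)$. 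For internal $B = \lim\langle A_n\rangle$ this is immediate: the slice is itself internal (coming from slicing $A_n$ at the representative of $\vec x_{[n]}$), and finite Fubini on $A_n\subseteq V_n^{n+m}$ transfers through the ultralimit. For general $B$ one approximates in $L^1(\mu^{n+m})$ by internal sets and shows that slice measures converge almost everywhere; the subtle point is that internal approximations to $B$ do not give internal approximations of each slice without passing to a subsequence, so one invokes Keisler's Fubini theorem for Loeb spaces, which manages this via a monotone class and dominated convergence argument.

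Finally, for the density convergence property, given finitely many sequences $\langle A_{i,n}\rangle$ and setting $A_i:=\lim\langle A_{i,n}\rangle$, the bounded sequence $|\bigcap_i A_{i,n}|/|V_n|^k$ has $\mathcal{U}$-limit equal to $\mu^k(\bigcap_i A_i)$ by construction, since each finite intersection is itself internal. A standard diagonal extraction produces a single $S'\subseteq S$ along which the ordinary limit of this density coincides with the $\mathcal{U}$-limit: for each $m$ the set $\{n:||\bigcap_i A_{i,n}|/|V_n|^k - \mu^k(\bigcap_i A_i)| < 1/m\}$ lies in $\mathcal{U}$, hence meets $S$ in an infinite set, and diagonalizing yields $S'$. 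Because only finitely many intersections appear, one $S'$ works uniformly for all of them.
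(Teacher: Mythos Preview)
The paper does not actually prove this theorem: immediately before the statement it says ``Rather than reiterate that development here, we state a theorem which encapsulates all needed properties of the construction and refer the reader to [Goldbring--Towsner] for a proof and a detailed exposition of the technique.'' So there is nothing to compare your argument against in the paper itself.

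Your outline is the standard ultraproduct/Loeb construction, which is exactly what the cited reference carries out, and the sketch is essentially correct. A couple of minor remarks. In the last paragraph, the final sentence (``Because only finitely many intersections appear, one $S'$ works uniformly for all of them'') is unnecessary: the statement only asks for convergence of the single quantity $|\bigcap_{i\le r} A_{i,n}|/|V_n|^k$, so there is only one limit to arrange, and the diagonalization you already described suffices. Also, in that diagonalization you might note explicitly that the sets $T_m=\{n\in S:\text{error}<1/m\}$ are nested decreasing, so choosing $n_m\in T_m$ with $n_1<n_2<\cdots$ immediately gives the required $S'$. Finally, for the Fubini axiom you are right that this is the genuinely nontrivial part; invoking Keisler's Fubini theorem for Loeb product spaces is the standard move, and a full proof really does require that machinery rather than just a monotone-class argument on its own.
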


We call any graded probability space $(\Omega,\{\mathcal{B}_k\},\{\mu^k\})$ together with the operation $\lim$ an \emph{ultraproduct} of the sequence $\{V_n\}$.  When we have specified a set $S$ in the theorem, we say the ultraproduct \emph{concentrates} on $S$.  The sets $\lim\langle A_n\rangle$ are called \emph{internal subsets} of $\Omega^k$.

Note that our ultraproducts are always non-atomic.

It follows from the theorem that if we have a sequence of uniformly presented simple functions $f_n:V_n^k\rightarrow\mathbb{R}$---that is, each $f_n$ has the form
\[f_n=\sum_{i\leq d}p_{i,n}\chi_{B_{i,n}}\]
where $d$ does not depend on $n$ and the $p_{i,n}$ are uniformly bounded---then there is a function $f=\lim\langle f_n\rangle$---an \emph{internal $L^\infty(\mathcal{B}_d)$ function}---such that given such functions $\langle f_{1,n}\rangle,\ldots,\langle f_{r,n}\rangle$ with $f_i=\lim\langle f_{i,n}\rangle$, there is a set $S'\subseteq S$ such that
\[\lim_{n\in S'}\frac{1}{|V_n|^k}\sum_{\vec x_{[k]}\in V_n^k}\prod_{i\leq r}f_{i,n}(\vec x_{[d]})=\int\prod_{i\leq r}f_i\,d\mu^k.\]
(Theorem 4.13 of \cite{goldbring:_approx_logic_measure} considers in more detail which sorts of functions we can expect this correspondence for.)

%Citations!

\section{Finite Consequences}\label{sec:finite}

\begin{definition}
If $H=(V,E)$ is a $k$-uniform hypergraph, $W$ a set, and $f:{W\choose k}\rightarrow\mathbb{R}$, we define $t_H(f)$ to be
\[\frac{1}{|W|^{|V|}}\sum_{\vec x_V\in W^V}\prod_{e\in E}f(\vec x_e).\]

When $F\subseteq{W\choose k}$, we write $t_H(F)$ for $t_H(\chi_F)$.
\end{definition}
This is equivalent to the usual definition of $t_H(F)$ as the fraction of functions from $V$ to $W$ which are homomorphisms into $F$.

We recall the main families of randomness notions from \cite{2012arXiv1208.5978L}.  We will phrase all our finitary notions of randomness in terms of limits of ratios to make the comparison with the infinite setting more explicit.
\begin{definition}
  Let $G_n=(V_n,E_n)$ be a sequence of $k$-uniform hypergraphs.

\begin{itemize}
\item $\{G_n\}$ is $\mathtt{Disc}_p$ if for each $\epsilon>0$ and any sequence of sets $U_n\subseteq V_n$ with $\frac{|U_n|}{|V_n|}\geq\epsilon$ for all $n$, 
\[\lim_{n\rightarrow\infty}\frac{\left|{U_n\choose k}\cap E_n\right|}{{|U_n|\choose k}}=p.\]

\item Let $k_1+\cdots+k_t=k$ and let $\mathcal{I}$ be the partition $\{[0,k_1-1],[k_1,k_1+k_2-1],\ldots,[k_1+\cdots+k_{t-1},k-1]\}$.  $\{G_n\}$ is $\mathtt{Expand}_p[\mathcal{I}]$ if for each $\epsilon>0$ and any sequence of sets $H_{i,n}\subseteq{V_n\choose k_i}$ with $\frac{|H_{i,n}|}{|{V_n\choose k_i}|}\geq\epsilon$ for all $i,n$, 
\[\lim_{n\rightarrow\infty}\frac{\left|\prod_{i=1}^t H_{i,n}\cap E_n\right|}{\prod_{i=1}^t|H_{i,n}|}=p.\]

\item For any $l$ with $1\leq l<k$, $\{G_n\}$ is $\mathtt{CliqueDisc}_p[l]$ if for each $\epsilon>0$ and any sequence of sets $B_n\subseteq {V_n\choose l}$ with $\frac{|\mathcal{K}_k(B_n)|}{|{V_n\choose k}|}\geq\epsilon$ for all $n$,
\[\lim_{n\rightarrow\infty}\frac{|\mathcal{K}_k(B_n)\cap E_n|}{|\mathcal{K}_k(B_n)|}=p\]
where $\mathcal{K}_k(B_n)$ is the set of $k$-tuples $e$ with ${e\choose l}\subseteq B_n$.

\item For any $l$ with $2\leq l\leq k$, $\{G_n\}$ is $\mathtt{Deviation}_p[l]$ if
\[\lim_{n\rightarrow\infty}t_{M_k[[l]\cap{[k]\choose k-1}]}(\chi_{E_n}-p)= 0.\]
\end{itemize}
\end{definition}
Note that $\mathtt{Disc}_p$ is exactly $\mathtt{CliqueDisc}_p[1]$.

In the definition of  $\mathtt{Deviation}_p[l]$, recall that $M_k[[l]\cap{[k]\choose k-1}]$ is the squashed octahedron.  We call a copy of the squashed octahedron in $V_n$ \emph{even} if an even number of edges belong to $E_n$ and \emph{odd} if an odd number of edges belong to $E_n$.  Then when $p=1/2$, $t_{M_k[[l]\cap{[k]\choose k-1}]}(\chi_{E_n}-1/2)$ counts the difference between the number of even and odd squashed octahedra, and approaches $0$ when this difference is small relative to the total number of squashed octahedra.

\begin{theorem}
  For any partition $\mathcal{I}$, $\{(V_n,E_n)\}$ is $\mathtt{Expand}_p[\mathcal{I}]$ if and only if in every ultraproduct of $\{V_n\}$, $\lim\langle E_n\rangle$ is $\mathtt{Disc}_p[\mathcal{I}]$.
\end{theorem}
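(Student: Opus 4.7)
The proof is by contrapositive in both directions, using the ultraproduct construction of Theorem~\ref{thm:ultraproduct} to translate between the finitary and analytic settings.

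$(\Rightarrow)$ Suppose some ultraproduct has $E=\lim\langle E_n\rangle$ failing to be $\mathtt{Disc}_p[\mathcal{I}]$. Since $\mathcal{I}$ is a partition $\{I_1,\ldots,I_t\}$, the $\sigma$-algebra $\mathcal{B}_{k,\mathcal{I}}$ is the product of the $\mathcal{B}_{I_j}$ and is generated by rectangles $\prod_{j=1}^{t}B_{I_j}$, so the failure of $\mathtt{Disc}_p[\mathcal{I}]$ yields some rectangle witness $\mu^k(E\cap\prod_j B_{I_j})\neq p\prod_j\mu^{k_j}(B_{I_j})$. Averaging each $B_{I_j}$ over the action of the symmetric group on $I_j$ and passing to a level set (using symmetry of $E$) produces a witness in which each $B_{I_j}$ is itself symmetric; equivalently, Lemma~\ref{thm:symmetrizing} applied to $f=\chi_E-p$ delivers such symmetric factors, together with the disjoint $S_i$ ensuring each has positive measure. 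Approximate each symmetric $B_{I_j}$ in $L^1$ by a symmetric internal set $\tilde H_{j,n}\subseteq V_n^{k_j}$ (symmetry is preserved by closing an arbitrary internal approximation under $S_{k_j}$), and let $H_{j,n}\subseteq\binom{V_n}{k_j}$ be its unordered version; the density $|H_{j,n}|/\binom{|V_n|}{k_j}$ is bounded below by some $\epsilon>0$ for large $n$. By Theorem~\ref{thm:ultraproduct},
\[
\frac{\mu^k(E\cap\prod_j B_{I_j})}{\prod_j\mu^{k_j}(B_{I_j})}=\lim_{n}\frac{|E_n\cap\prod_{j} H_{j,n}|}{\prod_{j}|H_{j,n}|},
\]
after the bookkeeping in which the $k_j!$ orderings of each $k_j$-subset contribute identical factors to numerator and denominator and the $O(1/|V_n|)$ contribution from ordered tuples with a repeated entry vanishes. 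The left side is not $p$, so $\mathtt{Expand}_p[\mathcal{I}]$ fails.

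$(\Leftarrow)$ Conversely, suppose $\{G_n\}$ fails $\mathtt{Expand}_p[\mathcal{I}]$. Extract sequences $H_{j,n}\subseteq\binom{V_n}{k_j}$ of density at least $\epsilon$ along an infinite $S\subseteq\mathbb{N}$ on which the Expand ratio stays bounded away from $p$. Take the ultraproduct concentrating on $S$. Let $\tilde H_{j,n}\subseteq V_n^{k_j}$ be the symmetric ordered version of $H_{j,n}$ and $\tilde H_j=\lim\langle\tilde H_{j,n}\rangle\in\mathcal{B}_{k_j}$; then $B=\{\vec x_{[k]}:\vec x_{I_j}\in\tilde H_j\text{ for each }j\}$ lies in $\mathcal{B}_{k,\mathcal{I}}$ with $\mu^k(B)=\prod_j\mu^{k_j}(\tilde H_j)>0$, and Theorem~\ref{thm:ultraproduct} yields
\[
\frac{\mu^k(E\cap B)}{\mu^k(B)}=\lim_{n\in S'}\frac{|E_n\cap\prod_j H_{j,n}|}{\prod_j|H_{j,n}|}\neq p,
\]
contradicting $\mathtt{Disc}_p[\mathcal{I}]$ for $E$.

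The main technical point, shared by both directions, is the conversion between ordered tuples (natural in the ultraproduct) and unordered $k_j$-subsets (natural in the Expand definition): the $k_j!$ orderings cancel between numerator and denominator, and the ordered tuples with repeated entries contribute only $O(1/|V_n|)$. Everything else is a direct application of the ultraproduct limit theorem, together with the product structure of $\mathcal{B}_{k,\mathcal{I}}$ which is special to the case when $\mathcal{I}$ is a partition.
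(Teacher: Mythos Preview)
Your proposal is correct and follows essentially the same route as the paper: both directions proceed by contrapositive, invoking Lemma~\ref{thm:symmetrizing} to upgrade a $\mathcal{B}_{k,\mathcal{I}}$-witness to one with symmetric factors, then approximating by internal sets and transferring via Theorem~\ref{thm:ultraproduct}. The paper differs only in presentation: it separates the trivial case $\mu^k(E)\neq p$ explicitly, and it spells out the passage from $\mathcal{K}_k(\{H_I\})$ to the product $\prod_I H_I$ (using that the $H_I$ are supported on disjoint $S_i$, so distinct permutations of the product are disjoint), whereas you absorb both into the invocation of Lemma~\ref{thm:symmetrizing} and your alternative averaging-and-level-set remark.
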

\begin{proof}
  Let $\{(V_n,E_n)\}$ be given and suppose there is some ultraproduct of $\{V_n\}$, $(\Omega,\{\mathcal{B}_k\},\{\mu^k\})$ so that $E=\lim\langle E_n\rangle$ is not \Disc{p}{\mathcal{I}}.  If we do not have $\lim_{n\rightarrow\infty}\frac{|E_n|}{|V_n|}=p$ then certainly $\{(V_n,E_n)\}$ is not $\mathtt{Expand}_p[\mathcal{I}]$, so we have $\mu^k(E)=p$.

Since $E$ is not \Disc{p}{\mathcal{I}}, $\chi_E-p$ is not \Disc{0}{\mathcal{I}}, so by Lemma \ref{thm:symmetrizing} we may find symmetric sets $H_I\in\mathcal{B}_I$ so that 
\[\left|\int_{\mathcal{K}_k(\{H_I\}_{I\in\mathcal{I}})}\chi_E-p\, d\mu^k\right|>0.\]
Observe that $\prod_{I\in\mathcal{I}}H_I\subseteq\mathcal{K}_k(\{H_I\}_{I\in\mathcal{I}})$ and $\mathcal{K}_k(\{H_I\}_{I\in\mathcal{I}})$ is the closure of $\prod_{I\in\mathcal{I}}H_I$ under permutations.  Because the $H_I$ are supported on the pairwise disjoint sets $S_i$, any permutation of $[k]$ either permutes $\prod_{I\in\mathcal{I}}H_I$ to itself or to a disjoint set, and so by the symmetry of $E$,
\[\left|\mu^k(\prod_{I\in\mathcal{I}}H_I\cap E)-p\mu^k(\prod_{I\in\mathcal{I}}H_I)\right|=\left|\int_{\prod_{I\in\mathcal{I}}H_I}\chi_E-p\, d\mu^k\right|=\epsilon>0.\]
Without loss of generality, we may assume the sets $H_I$ are internal since the internal sets generate the $\sigma$-algebras $\mathcal{B}_I$, so $H_I=\lim\langle H_{I,n}\rangle$.  Necessarily we have $\mu^{|I|}(H_I)=\delta>0$ for some small enough $\delta$.

Therefore there is an infinite set $S'$ so that for every $n\in S'$, $\frac{|H_{I,n}|}{|{V_n\choose I}|}\geq\delta/2$ and
\[\left|\frac{|\prod H_{I,n}\cap E_n|}{|V_n|^k}-p\frac{|\prod H_{I,n}|}{|V_n|^k}\right|\geq\epsilon/2,\]
and since $\frac{|\prod H_{I,n}|}{|V_n|^k}\leq 1$,
\[\left|\frac{|\prod H_{I,n}\cap E_n|}{|\prod H_{I,n}|}-p\right|\geq\epsilon/2.\]
Therefore $\{G_n\}$ is not $\mathtt{Expand}_p[\mathcal{I}]$.

Conversely, if $\{G_n\}$ is not $\mathtt{Expand}_p[\mathcal{I}]$, we may find $\epsilon,\delta>0$, $H_{I,n}\subseteq{V_n\choose |I|}$ with each $\frac{|H_{I,n}|}{|{V_n\choose I}|}\geq\epsilon$, and an infinite set $S'$ so that for each $n\in S'$,
\[\left|\frac{|\prod_I H_{I,n}\cap E_n|}{|\prod_I H_{I,n}|}-p\right|\geq\delta.\]
In the ultraproduct concentrating on $S$, the set $\prod_I H_I\in\mathcal{B}_{k,\mathcal{I}}$ satisfies
\[\left|\int(\chi_E-p)\prod_I \chi_{H_I}d\mu^k\right|\geq\epsilon\delta,\]
so $\chi_E$ is not \Disc{p}{\mathcal{I}}.
\end{proof}

\begin{theorem}
For $l>1$, $\{(V_n,E_n)\}$ is $\mathtt{CliqueDisc}_p[l]$ if and only if in every ultraproduct of $\{V_n\}$, $\lim\langle E_n\rangle$ is $\mathtt{Disc}_p[l]$.
\end{theorem}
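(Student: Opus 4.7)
The plan is to mirror the structure of the $\mathtt{Expand}_p[\mathcal{I}]$ theorem proved just above, with one genuinely new ingredient that explains why the hypothesis $l>1$ appears.

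For the easy direction (finitary failure $\Rightarrow$ analytic failure), I would proceed exactly as in the previous theorem. If $\{G_n\}$ is not $\mathtt{CliqueDisc}_p[l]$, then along some infinite $S$ there are $B_n\subseteq\binom{V_n}{l}$ with $|\mathcal{K}_k(B_n)|/\binom{|V_n|}{k}\geq\epsilon$ and $\big||\mathcal{K}_k(B_n)\cap E_n|/|\mathcal{K}_k(B_n)|-p\big|\geq\delta$. Passing to an ultraproduct concentrating on $S$, let $B=\lim\langle B_n\rangle$; since $B$ is symmetric and $\mathcal{K}_k(B)=\bigcap_{I\in\binom{[k]}{l}}\{\vec x_{[k]}:\vec x_I\in B\}$, the characteristic function $\chi_{\mathcal{K}_k(B)}$ is $\mathcal{B}_{k,\binom{[k]}{l}}$-measurable. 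The correlation inequality transfers to $\bigl|\int(\chi_E-p)\chi_{\mathcal{K}_k(B)}\,d\mu^k\bigr|\geq\epsilon\delta/k!>0$, so $E$ is not $\Disc{p}{l}$.

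For the reverse direction, suppose $E=\lim\langle E_n\rangle$ is not $\Disc{p}{l}$ in some ultraproduct. Apply Lemma \ref{thm:symmetrizing} with $\mathcal{I}=\binom{[k]}{l}$ to obtain pairwise disjoint $S_0,\dots,S_{k-1}\subseteq\Omega$ and symmetric $H_I\in\mathcal{B}_l$, each supported on $l$-tuples with one coordinate in $S_i$ for each $i\in I$, such that $\bigl|\int_{\mathcal{K}_k(\{H_I\})}(\chi_E-p)\,d\mu^k\bigr|>0$. I now want to collapse the family $\{H_I\}_I$ to a single symmetric set, and here is where $l>1$ enters.

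The key claim is that for $l\geq 2$, $\mathcal{K}_k(H)=\mathcal{K}_k(\{H_I\}_{I\in\binom{[k]}{l}})$, where $H=\bigcup_I H_I\subseteq\Omega^l$ is symmetric. The inclusion $\mathcal{K}_k(\{H_I\})\subseteq\mathcal{K}_k(H)$ is immediate since as $I$ ranges over $\binom{[k]}{l}$ and $\pi$ is a bijection, $\pi^{-1}(I)$ also ranges over $\binom{[k]}{l}$. For the reverse, if $\vec x_{[k]}\in\mathcal{K}_k(H)$ then every $l$-subset $\vec x_J$ lies in some $H_{I'}$, hence has its entries in $l$ distinct $S_i$'s; taking $J$'s covering all pairs (which requires $l\geq 2$) forces every entry of $\vec x$ to lie in some $S_i$, with different entries in different $S_i$'s. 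The unique $\pi:[k]\to[k]$ with $x_{\pi^{-1}(i)}\in S_i$ then satisfies $\vec x_{\pi^{-1}(I)}\in H_I$ for each $I$, by matching supports, so $\vec x_{[k]}\in\mathcal{K}_k(\{H_I\})$.

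With the claim in hand, finish as in the $\mathtt{Expand}$ theorem. The symmetric $H$ can be assumed internal (since internal sets generate $\mathcal{B}_l$), so $H=\lim\langle H_n\rangle$ with each $H_n\subseteq\binom{V_n}{l}$ symmetric, and $\mathcal{K}_k(H_n)$ is internal with $\lim\langle\mathcal{K}_k(H_n)\rangle=\mathcal{K}_k(H)$. Since $\mu^k(\mathcal{K}_k(H))>0$ and $\bigl|\mu^k(\mathcal{K}_k(H)\cap E)-p\,\mu^k(\mathcal{K}_k(H))\bigr|>0$, the ultraproduct defining property yields an infinite subset along which $|\mathcal{K}_k(H_n)|/\binom{|V_n|}{k}$ is bounded below and $|\mathcal{K}_k(H_n)\cap E_n|/|\mathcal{K}_k(H_n)|$ is bounded away from $p$, contradicting $\mathtt{CliqueDisc}_p[l]$. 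The main obstacle is the structural claim about $\mathcal{K}_k(H)$; everything else is routine ultraproduct bookkeeping.
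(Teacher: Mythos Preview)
Your proposal is correct and follows essentially the same approach as the paper: both directions proceed exactly as you describe, with Lemma~\ref{thm:symmetrizing} supplying the $H_I$'s and the union $H=\bigcup_I H_I$ reducing to a single symmetric $l$-graph. The paper simply asserts that $\mathcal{K}_k(\{H_I\}_{I\in\mathcal{I}})=\mathcal{K}_k(\{H\}_{I\in\mathcal{I}})$ since $l>1$, whereas you actually spell out why (using that every pair of coordinates sits inside some $l$-subset, forcing distinct $S_i$'s); otherwise the arguments are the same, and your note that $H$ should be \emph{approximated} by an internal set rather than literally assumed internal is the only place to tighten the wording.
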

Note that the $l=1$ case is covered by the previous theorem.
\begin{proof}
Let $\{(V_n,E_n)\}$ be given and suppose there is some ultraproduct of $\{V_n\}$, $(\Omega,\{\mathcal{B}_k\},\{\mu^k\})$ so that $E=\lim\langle E_n\rangle$ is not \Disc{p}{l}.  Again, if we do not have $\lim_{n\rightarrow\infty}\frac{|E_n|}{|V_n|}=p$ then certainly $\{(V_n,E_n)\}$ is not $\mathtt{CliqueDisc}_p[l]$, so we have $\mu^k(E)=p$.

If $E$ is not \Disc{p}{l} then by Lemma \ref{thm:symmetrizing}, we may find sets $H_I$ for each $I\subseteq[k]$ with $|I|=l$ so that $\left|\int_{\mathcal{K}_k(\{H_I\}_{I\in\mathcal{I})}}\chi_E-p\,d\mu^k\right|>0$.
Since the elements of $\mathcal{I}$ have the same size, it makes sense to set $H=\bigcup_{I\in\mathcal{I}}H_I$.  Since $l>1$, $\mathcal{K}_k(\{H_I\}_{I\in\mathcal{I}})=\mathcal{K}_k(\{H\}_{I\in\mathcal{I}})$.

$H$ is arbitrarily well approximated by internal sets $H'$, so we may find an internal $H'$ with
\[\left|\mu^k(\mathcal{K}_k(\{H'\}_{I\in\mathcal{I}})\cap E)-p\mu^k(\mathcal{K}_k(\{H'\}_{I\in\mathcal{I}}))\right|=\left|\int_{\mathcal{K}_k(\{H'\}_{I\in\mathcal{I})}}\chi_E-p\,d\mu^k\right|=\delta>0.\]
Necessarily $\mu^k(\mathcal{K}_k(\{H\}_{I\in\mathcal{I}}))=\epsilon>0$.

Then $H'=\lim\langle H'_n\rangle$ and there is an infinite set $S'$ so that for $n\in S'$, $\frac{|\mathcal{K}_k(H'_n)|}{|{V_n\choose k}|}\geq\epsilon/2$ and 
\[\left|\frac{|\mathcal{K}_k(H'_n)\cap E_n|}{|V_n|^k}-p\frac{|\mathcal{K}_k(H'_n)|}{|V_n|^k}\right|\geq\delta/2\]
and therefore
\[\left|\frac{|\mathcal{K}_k(H'_n)\cap E_n|}{|\mathcal{K}_k(H'_n)|}-p\right|\geq\delta/2>0.\]
Therefore $\{G_n\}$ is not $\mathtt{CliqueDisc}_p[l]$.

For the converse, suppose $\{G_n\}$ is not $\mathtt{CliqueDisc}_p[l]$.  Then there is an $\epsilon>0$ and a sequence of sets $H_n\subseteq{V_n\choose l}$ with $\frac{|\mathcal{K}_k(H_n)|}{|{V_n\choose k}|}\geq\epsilon$ for all $n$ so that $\lim_{n\rightarrow\infty}\frac{|\mathcal{K}_k(H_n)\cap E_n|}{|\mathcal{K}_k(H_n)|}\neq p$.  There is an infinite set $S$ so that for all $n\in S$, the quantity is bounded away from $p$ by some $\delta>0$.  We take an ultraproduct concentrating on this set $S$, and we have
\[\left|\int(\chi_E-p)\mathcal{K}_k(H)d\mu^k\right|\geq\epsilon\delta>0.\]
Since $\mathcal{K}_k(H)$ is $\mathcal{B}_{k,l}$-measurable, $\chi_E-p$ is not \Disc{0}{l}, so $\chi_E$ is not \Disc{p}{l}.
\end{proof}

% \begin{theorem}
%   For any partition $\mathcal{I}$, $\{G_n\}$ is $\mathtt{Deviation}_p[l]$ if and only if every ultraproduct of $\{G_n\}$ is \Disc{p}{[l]\cap{[k]\choose k-1}}.
% \end{theorem}
% \begin{proof}
% Suppose $\{G_n\}$ is $\mathtt{Deviation}_p[l]$ and take any ultraproduct $(\Omega,\{\mathcal{B}_k\},\{\mu^k\})$ and let $E=\lim\langle E_n\rangle$.  Then $t_{M_k[[l]\cap{[k]\choose k-1}]}(\chi_E-p)=0$, so by Theorem \ref{thm:main_inf}, $\chi_E-p$ is \Disc{0}{[l]\cap{[k]\choose k-1}}.  Therefore $E$ is \Disc{p}{[l]\cap{[k]\choose k-1}}.
% \end{proof}

More generally, we define a finitary randomness notion for any subset-free $\mathcal{I}$.
\begin{definition}
Let $G_n=(V_n,E_n)$ be a sequence of $k$-uniform hypergraphs and let $\mathcal{I}$ be subset-free on $[k]$.  $\{G_n\}$ is $\mathtt{Disc}_p[\mathcal{I}]$ if for each $\epsilon>0$ and any sequences of sets $H_{I,n}\subseteq {V_n\choose l}$ with $\frac{|\mathcal{K}_k(\{H_{I,n}\}_{I\in\mathcal{I}})|}{|{V_n\choose k}|}\geq\epsilon$ for all $n$,
\[\lim_{n\rightarrow\infty}\frac{|\mathcal{K}_k(\{H_{I,n}\}_{I\in\mathcal{I}})\cap E_n|}{|\mathcal{K}_k(\{H_{I,n}\}_{I\in\mathcal{I}})|}=p.\]
\end{definition}

\begin{theorem}\label{thm:main_finite}
  For any $\{G_n\}=\{(V_n,E_n)\}$ and any subset-free $\mathcal{I}$ with $\lim_{n\rightarrow\infty}\frac{|E_n|}{|{V_n\choose k}|}=p$, the following are equivalent:
  \begin{enumerate}
  \item $\{G_n\}$ is $\mathtt{Disc}_p[\mathcal{I}]$,
  \item In every ultraproduct of $\{V_n\}$, $\lim\langle E_n\rangle$ is \Disc{p}{\mathcal{I}},
  \item For every fixed $k$-uniform $\mathcal{I}$-adapted $H=(W,F)$, $t_H(G_n)\rightarrow p^{|F|}$,
  \item $t_{M_k[\mathcal{I}]}(G_n)\rightarrow p^{2^{|\mathcal{I}|}}$.
  \end{enumerate}
\end{theorem}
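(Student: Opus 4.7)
The plan is to reduce everything to the infinitary Theorem \ref{thm:main_inf} via the ultraproduct transfer machinery of Theorem \ref{thm:ultraproduct}, following the template already executed above for $\mathtt{Expand}_p[\mathcal{I}]$ and $\mathtt{CliqueDisc}_p[l]$. The pivotal equivalence to establish by hand is (1) $\Leftrightarrow$ (2); once that is in place, (2) $\Leftrightarrow$ (3) $\Leftrightarrow$ (4) is essentially a direct transfer of Theorem \ref{thm:main_inf}.

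For (2) $\Rightarrow$ (1), I would argue contrapositively: suppose $\{G_n\}$ is not $\mathtt{Disc}_p[\mathcal{I}]$. Then for some $\epsilon>0$, some $\delta>0$, and an infinite $S\subseteq\mathbb{N}$, we have sequences $H_{I,n}\subseteq\binom{V_n}{|I|}$ with $|\mathcal{K}_k(\{H_{I,n}\})|/\binom{|V_n|}{k}\geq\epsilon$ and the density of $E_n$ inside $\mathcal{K}_k(\{H_{I,n}\})$ at least $\delta$ away from $p$. Take an ultraproduct concentrating on $S$ and set $H_I=\lim\langle H_{I,n}\rangle$; since each $H_I$ is symmetric (it is defined on subsets), $\mathcal{K}_k(\{H_I\})$ is well-defined and is a finite union of sets of the form $\{\vec x_{[k]}:\vec x_{\pi^{-1}(I)}\in H_I\text{ for all }I\}$, each of which lies in a symmetric permutation of $\mathcal{B}_{k,\mathcal{I}}$. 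Transferring the density estimate and integrating $(\chi_E-p)$ against $\chi_{\mathcal{K}_k(\{H_I\})}$ gives a nonzero quantity, and by averaging over permutations the symmetric $\chi_E-p$ must already fail to integrate to zero against one of the $\mathcal{B}_{k,\mathcal{I}}$-measurable summands. Hence $\lim\langle E_n\rangle$ is not \Disc{p}{\mathcal{I}}.

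For the converse (1) $\Rightarrow$ (2), assume some ultraproduct gives $E=\lim\langle E_n\rangle$ not \Disc{p}{\mathcal{I}}, noting $\mu^k(E)=p$ by the density hypothesis on $\{G_n\}$. Apply Lemma \ref{thm:symmetrizing} to the symmetric $\chi_E-p$ to obtain pairwise disjoint $S_i$ and symmetric $H_I$ supported on the obvious cells with
\[\left|\int_{\mathcal{K}_k(\{H_I\}_{I\in\mathcal{I}})}(\chi_E-p)\,d\mu^k\right|>0.\]
Approximate each $H_I$ in $L^1$ by an internal set $H_I'=\lim\langle H_{I,n}'\rangle$ finely enough to preserve this inequality by a factor of $1/2$. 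The disjointness of the $S_i$ means that $\mathcal{K}_k(\{H_I'\})$ is a disjoint union of permuted products $\prod_I H_{\pi(I),n}'$, so the finitary expression $|\mathcal{K}_k(\{H_{I,n}'\})\cap E_n|/|\mathcal{K}_k(\{H_{I,n}'\})|$ is bounded away from $p$ along a further infinite subsequence by Theorem \ref{thm:ultraproduct}; this contradicts $\mathtt{Disc}_p[\mathcal{I}]$.

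With (1) $\Leftrightarrow$ (2) settled, I would close the loop as follows. For (2) $\Leftrightarrow$ (3): if $E$ is \Disc{p}{\mathcal{I}} in every ultraproduct then Theorem \ref{thm:main_inf} forces $t_H(E)=p^{|F|}$ for every $\mathcal{I}$-adapted $H$; conversely, if $t_H(G_n)$ does not converge to $p^{|F|}$ for some such $H$, pass to a subsequence on which $|t_H(G_n)-p^{|F|}|\geq\eta>0$ and take its ultraproduct, where the transfer statement in Theorem \ref{thm:ultraproduct} (applied to the uniformly presented simple function $\chi_{E_n}$) gives $t_H(E)\neq p^{|F|}$, contradicting Theorem \ref{thm:main_inf}. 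Finally, (3) $\Rightarrow$ (4) is immediate because $M_k[\mathcal{I}]$ is $\mathcal{I}$-adapted (proved in Section \ref{sec:subsetfree}) with exactly $2^{|\mathcal{I}|}$ edges, and (4) $\Rightarrow$ (2) follows by transferring $t_{M_k[\mathcal{I}]}(E)=p^{2^{|\mathcal{I}|}}$ into every ultraproduct and applying (3) $\Rightarrow$ (1) of Theorem \ref{thm:main_inf}.

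The main obstacle is the (1) $\Leftrightarrow$ (2) step, specifically the forward direction where Lemma \ref{thm:symmetrizing} produces \emph{symmetrized} witnesses $\mathcal{K}_k(\{H_I\})$ rather than raw product sets in $\mathcal{B}_{k,\mathcal{I}}$; aligning these with the finitary quantifier ``$\mathcal{K}_k(\{H_{I,n}\})\cap E_n$'' in the definition of $\mathtt{Disc}_p[\mathcal{I}]$, and keeping track of the non-symmetry of $\mathcal{B}_{k,\mathcal{I}}$ when $\mathcal{I}$ is not symmetric, is the only genuinely new bookkeeping beyond the $\mathtt{Expand}$/$\mathtt{CliqueDisc}$ arguments already given. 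Once the disjoint-support structure produced by Lemma \ref{thm:symmetrizing} is exploited, the internal-set approximation and the ultraproduct density formula do the rest.
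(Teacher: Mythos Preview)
Your proposal is correct and follows essentially the same route as the paper: prove $(1)\Leftrightarrow(2)$ directly by the symmetrizing lemma plus internal approximation and transfer, then close the cycle $(2)\Rightarrow(3)\Rightarrow(4)\Rightarrow(2)$ by invoking Theorem~\ref{thm:main_inf} through the ultraproduct. The only visible difference is in the $\neg(1)\Rightarrow\neg(2)$ step: the paper simply asserts that $\mathcal{K}_k(\{H_I\})$ is $\mathcal{B}_{k,\mathcal{I}}$-measurable, whereas you unpack it as a union of permuted products and appeal to symmetry of $\chi_E-p$ --- this is a more honest accounting of the same step, and both versions gloss over the same bookkeeping about how the permuted pieces interact.
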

\begin{proof}
$(1)\Rightarrow(2)$: Suppose there is an ultraproduct of $\{V_n\}$ where $E=\lim\langle E_n\rangle$ is not $\mathtt{Disc}_p[\mathcal{I}]$.  If $\mu^k(E)\neq p$ then certainly $\{G_n\}$ is not \Disc{p}{\mathcal{I}}, so we assume $\mu^k(E)=p$.

$\chi_E-p$ is not \Disc{0}{\mathcal{I}} so by Lemma \ref{thm:symmetrizing} there are symmetric $H_I\subseteq\mathcal{B}_I$ such that
\[\left|\int_{\mathcal{K}_k(\{H_I\}_{I\in\mathcal{I}})}\chi_E-p\,d\mu^k\right|>0.\]
The $H_I$ are approximated by internal sets, so we may replace the $H_I$ with internal sets $H'_I$ so that
\[\left|\int_{\mathcal{K}_k(\{H'_I\}_{I\in\mathcal{I}})}\chi_E-p\,d\mu^k\right|=\left|\mu^k(\mathcal{K}_k(\{H'_I\}_{I\in\mathcal{I}})\cap E)-p \mu^k(\mathcal{K}_k(\{H'_I\}_{I\in\mathcal{I}}))\right|=\epsilon>0.\]
We also have $\mu^k(\mathcal{K}_k(\{H'_I\}_{I\in\mathcal{I}}))=\delta>0$.

Then there must be an infinite sequence $S'$ so that for $n\in S'$, $\frac{|\mathcal{K}_k(\{H'_{I,n}\}_{I\in\mathcal{I}})|}{|{V_n\choose k}|}\geq\delta/2$ and 
\[\left|\frac{|\mathcal{K}_k(\{H'_{I,n}\}_{I\in\mathcal{I}})\cap E_n|}{|\mathcal{K}_k(\{H'_{I,n}\}_{I\in\mathcal{I}})|}-p\right|\geq\epsilon/2.\]
This contradicts the assumption that $\{G_n\}$ is $\mathtt{CliqueDisc}_p[\mathcal{I}]$.

$(2)\Rightarrow(1)$: Suppose $\{G_n\}$ is not $\mathtt{CliqueDisc}_p[\mathcal{I}]$.  Take an infinite sequence $S$ and sequences $H_{I,n}$ witnessing this, so for each $n\in S$, $\frac{|\mathcal{K}_k(\{H_{I,n}\}_{I\in\mathcal{I}})|}{|{V_n\choose k}|}\geq\epsilon$ and
\[\left|\frac{|\mathcal{K}_k(\{H_{I,n}\}_{I\in\mathcal{I}})\cap E_n|}{|\mathcal{K}_k(\{H_{I,n}\}_{I\in\mathcal{I}})|}-p\right|\geq\delta.\]
Then letting $H_I=\lim\langle H_{I,n}\rangle$, we have $\mu^k(\mathcal{K}_k(\{H_I\}))\geq\epsilon$ and
\[\left|\mu^k(\mathcal{K}_k(\{H_{I}\}_{I\in\mathcal{I}})\cap E)-p\mu^k(\mathcal{K}_k(\{H_{I}\}_{I\in\mathcal{I}}))\right|\geq\epsilon\delta>0.\]
Since $\mathcal{K}_k(\{H_I\}_{I\in\mathcal{I}})$ is $\mathcal{B}_{k,\mathcal{I}}$-measurable, it follows that $E$ is not \Disc{p}{\mathcal{I}}.

$(3)\Rightarrow (4)$ is immediate.

$(4)\Rightarrow (2)$: If $t_{M_k[\mathcal{I}]}(G_n)\rightarrow p^{2^{|\mathcal{I}|}}$ then in any ultraproduct, $t_{M_k[\mathcal{I}]}(E)=p^{2^{|\mathcal{I}|}}$.  By Theorem \ref{thm:main_inf}, $E$ is \Disc{p}{\mathcal{I}}.

$(2)\Rightarrow(3)$: Suppose $t_H(E_n)\not\rightarrow p^{|F|}$, so there is an infinite sequence $S$ with
\[\left|t_H(E_n)-p^{|F|}\right|\geq\epsilon\]
for all $n\in S$.  Then in the ultraproduct concentrating on $S$, $t_H(E)\neq p^{|F|}$, so by Theorem \ref{thm:main_inf}, $E$ is not \Disc{p}{\mathcal{I}}.
\end{proof}

\begin{cor}
$\{(V_n,E_n)\}$ is $\mathtt{Deviation}_p[l]$ if and only if in every ultraproduct of $\{V_n\}$, $\lim\langle E_n\rangle$ is \Disc{p}{[l]\cap{[k]\choose k-1}}.
\end{cor}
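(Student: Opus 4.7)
The plan is to reduce the corollary to Theorem \ref{thm:main_inf} applied to the shifted function $g = \chi_E - p$, and then to transfer between the finitary quantity $t_{M_k[\mathcal{I}]}(\chi_{E_n} - p)$ and its ultraproduct counterpart $t_{M_k[\mathcal{I}]}(\chi_E - p)$. Set $\mathcal{I} = [l]\cap{[k]\choose k-1}$, $M = M_k[\mathcal{I}]$, and $g = \chi_E - p$.

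First I would record two easy equivalences. Since $\chi_E$ is symmetric so is $g$, and by linearity of conditional expectation, $\chi_E$ is \Disc{p}{\mathcal{I}} exactly when $g$ is \Disc{0}{\mathcal{I}}. By Theorem \ref{thm:main_inf} applied to $g$ with the value $0$, this is in turn equivalent to $t_M(g) = 0^{2^{|\mathcal{I}|}} = 0$. Thus the corollary reduces to showing that $t_M(\chi_E - p) = 0$ in every ultraproduct of $\{V_n\}$ if and only if $\lim_{n\to\infty} t_M(\chi_{E_n} - p) = 0$, i.e.\ $\{(V_n,E_n)\}$ is $\mathtt{Deviation}_p[l]$.

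For this remaining equivalence I would expand
\[t_M(\chi_{E_n} - p) = \sum_{F \subseteq E(M)} (-p)^{|E(M)|-|F|}\, t_{(V(M), F)}(\chi_{E_n}),\]
a polynomial with $n$-independent exponents in the homomorphism densities of subhypergraphs $(V(M),F)$. Each summand is an average of a product of indicator functions over tuples from $V_n^{V(M)}$, so the internal $L^\infty$ transfer immediately following Theorem \ref{thm:ultraproduct} (used at arity $|V(M)|$ rather than $k$) supplies, for any infinite $S \subseteq \mathbb{N}$, a set $S' \subseteq S$ along which every $t_{(V(M),F)}(\chi_{E_n})$ converges to $t_{(V(M),F)}(\chi_E)$, and hence along which $t_M(\chi_{E_n} - p) \to t_M(\chi_E - p)$.

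Finally I would combine these pieces. If $\mathtt{Deviation}_p[l]$ holds then along any such $S'$ the ultralimit of $t_M(\chi_{E_n}-p)$ is $0$, giving $t_M(\chi_E - p) = 0$ in every ultraproduct and hence $\chi_E$ is \Disc{p}{\mathcal{I}}. Conversely, if $\mathtt{Deviation}_p[l]$ fails then some $\epsilon > 0$ and infinite $S$ satisfy $|t_M(\chi_{E_n} - p)| \geq \epsilon$ for $n \in S$; concentrating an ultraproduct on $S$ and applying the convergence above yields $|t_M(\chi_E - p)| \geq \epsilon$, so $\chi_E$ is not \Disc{p}{\mathcal{I}} in that ultraproduct. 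The only mildly delicate point is confirming that the ultraproduct transfer stated at arity $k$ after Theorem \ref{thm:ultraproduct} really is available at arity $|V(M)|$; since this is the very same construction carried out with a larger tuple size, I expect no genuine obstacle here, and the corollary follows.
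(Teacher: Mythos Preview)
Your proposal is correct and is essentially the natural derivation the paper has in mind; the paper states the result as a bare corollary with no proof, and your argument---reducing to Theorem~\ref{thm:main_for_0} (equivalently Theorem~\ref{thm:main_inf} at $p=0$) for $g=\chi_E-p$ and then transferring $t_M(\chi_{E_n}-p)$ through the ultraproduct via the expansion into subgraph densities---supplies exactly the missing details. Your caveat about arity is harmless: the graded probability space and the internal-set machinery of Theorem~\ref{thm:ultraproduct} are set up for every $k\in\mathbb{N}$ simultaneously, so the transfer at arity $|V(M)|$ is available for free.
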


We conclude this section by noting that we can ``undo'' the ultraproduct construction by randomly sampling from the ultraproduct:
\begin{theorem}\label{thm:sampling}
  Let $(\Omega,\{\mu^k\},\{\mathcal{B}_k\})$ be a graded probability space and let $E$ be a symmetric subset of $\Omega^k$.  Then there is a sequence of finite $k$-uniform hypergraphs $\{(V_n,E_n)\}$ such that for every finite $k$-uniform hypergraph $H$, $\lim_{n\rightarrow\infty}t_H((V_n,E_n))=t_H(E)$.
\end{theorem}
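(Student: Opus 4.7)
The plan is to invert the ultraproduct construction by random sampling: for each $n$ I will draw $n$ points from $\Omega$ according to the graded measure $\mu^n$ and let $E_n$ consist of the $k$-subsets whose sampled tuples lie in $E$. Concentration of $t_H$ around $t_H(E)$, combined with a diagonal argument over the countable class of finite $k$-uniform hypergraphs, will then produce a deterministic sequence with the required convergence.

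\textbf{Construction and moment computation.} Set $V_n = [n]$ and sample $\vec{x}_n \in \Omega^n$ from $\mu^n$; define $E_n = \{e \in {V_n\choose k} : \vec{x}_e \in E\}$, which is well defined by the symmetry of $E$ and measurable via the product axiom $\mathcal{B}_k \times \mathcal{B}_{n-k} \subseteq \mathcal{B}_n$ combined with permutation invariance of $\mathcal{B}_n$. For a fixed finite $k$-uniform hypergraph $H = (W, F)$ with $m = |W|$ I expand
\[t_H((V_n, E_n)) = \frac{1}{n^m}\sum_{\phi : W \to [n]} \prod_{f \in F}\chi_E(\vec{x}_{\phi(f)})\]
and split by whether $\phi$ is injective. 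The non-injective $\phi$ number $O(n^{m-1})$ and hence contribute $O(1/n)$. For injective $\phi$, the marginal distribution of $\vec{x}_{\phi(W)}$ under $\mu^n$ is $\mu^m$ (apply the integration axiom to the product set $A \times \Omega^{n-m}$ and use permutation invariance), so each inner expectation equals $t_H(E)$; thus $\mathbb{E}[t_H((V_n, E_n))] = t_H(E) + O(1/n)$. For the variance, when $\phi, \phi'$ have disjoint images the integration axiom applied to product sets in $\mathcal{B}_m \times \mathcal{B}_m$ gives $\mu^{2m}(A \times B) = \mu^m(A)\mu^m(B)$, so $\vec{x}_{\phi(W)}$ and $\vec{x}_{\phi'(W)}$ are independent and the covariance of the corresponding products vanishes. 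Pairs $(\phi,\phi')$ with overlapping images number $O(n^{2m-1})$ and contribute covariance at most $1$ each, so $\mathrm{Var}(t_H((V_n, E_n))) = O(1/n)$.

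\textbf{Diagonalization and main obstacle.} By Chebyshev, $t_H((V_n, E_n)) \to t_H(E)$ in probability for each fixed $H$. Passing to a sufficiently sparse subsequence of $n$ renders the Chebyshev bound summable, so Borel-Cantelli upgrades this to almost-sure convergence for each $H$ along that subsequence; because the class of finite $k$-uniform hypergraphs is countable, a countable intersection yields a full-measure event on which the convergences hold simultaneously for all $H$, and any sample drawn from this event provides the required deterministic sequence. The principal technical point is the factorization of $\mu^{a+b}$ on product sets in $\mathcal{B}_a \times \mathcal{B}_b$, which drives both the identification of the marginal of $\vec{x}_{\phi(W)}$ as $\mu^m$ and the independence underlying the variance bound; this follows immediately from the integration axiom specialized to product sets, but must be stated carefully since $\mathcal{B}_{a+b}$ is strictly larger than the product $\sigma$-algebra in the cases of interest. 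Once this factorization is in hand, the rest is standard second-moment concentration.
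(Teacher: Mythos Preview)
Your proof is correct and follows essentially the same route as the paper: sample $n$ points, form the induced hypergraph, and use a second-moment/Chebyshev argument together with the countability of finite hypergraphs to extract a deterministic sequence. If anything, you are more careful than the paper on two points it glosses over---sampling from $\mu^n$ rather than the (possibly too coarse) product measure, and invoking Borel--Cantelli along a sparse subsequence to upgrade convergence in probability to almost-sure convergence.
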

\begin{proof}
We let $V_n$ consist of $n$ points in $\Omega$ chosen randomly and independently according to $\mu$, and set $E_n=E\upharpoonright{V_n\choose k}$.  For any fixed $H$, we claim that with probability $1$, $\lim_{n\rightarrow\infty}t_H((V_n,E_n))=t_H(E)$.  The claim then follows since there are countably many $H$, and therefore any sequence $\{(V_n,E_n)\}$ in the intersection of countably many sets of measure $1$ has the desired property.

Let $H=(W,F)$.  It suffices to show that for each $\epsilon>0$, when $n$ is sufficiently large and $V_n$ consists of $n$ points in $\Omega$ chosen at random, $|t_H((V_n,E_n))-t_H(E)|<\epsilon$ with probability $>1-\epsilon$.  For every map $\pi:W\rightarrow V_n$, define an indicator variable $X_\pi$ which is $1$ if $\pi$ is a homomorphism from $H$ to $(V_n,E_n)$ and $0$ otherwise.  Then $t_H((V_n,E_n))=\frac{1}{n^d}\sum_\pi X_\pi$.  Each $X_\pi$ is a Bernoulli random variable which is $1$ with probability $t_H(E)$, and the variance of $t_H((V_n,E_n))$ is
\[\frac{1}{n^{2d}}\left[\sum_\pi\mathrm{Var}(X_\pi)+2\sum_{\pi,\pi'}\mathrm{Cov}(X_\pi,X_{\pi'})\right].\]
Observe that $\mathrm{Cov}(X_\pi,X_{\pi'})$ is $0$ unless the ranges of $\pi$ and $\pi'$ intersect; since the number of pairs whose ranges intersect is on the order of $n^{2d-1}$, the $\frac{2}{n^{2d}}\sum_{\pi,\pi'}\mathrm{Cov}(X_\pi,X_{\pi'})$ term approaches $0$ as $n\rightarrow\infty$.  For each $\pi$, $\mathrm{Var}(X_\pi)=t_H(E)(1-t_H(E))$, so $Var(t_H((V_n,E_n)))\rightarrow 0$ as $n\rightarrow\infty$.  $\mathbb{E}(t_H((V_n,E_n)))=t_H(E)$, so by Chebyshev's inequality, 
\[\mathbb{E}(|t_H((V_n,E_n))-t_H(E)|>\epsilon)\leq \frac{\mathrm{Var}(t_H((V_n,E_n)))}{\epsilon^2}.\]
Choosing $n$ large enough, we make the right side side smaller than $\epsilon$ as needed.
\end{proof}

\section{Counting Subgraphs}\label{sec:counting}

\begin{theorem}\label{thm:counting}
Suppose $H=(V,E)$ is a $k$-uniform and $g\in L^\infty(\mathcal{B}_k)$ is \Disc{p}{\mathcal{I}}.  If either:
\begin{itemize}
\item $g$ is symmetric and $H$ is $\mathcal{I}$-adapted, or
\item $H$ is $k$-partite and strongly $\mathcal{I}$-adapted,
\end{itemize}
then $t_H(g)=p^{|E|}$.
\end{theorem}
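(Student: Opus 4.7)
The plan is to induct on $|E|$, peeling off one edge at a time and using the defining property $\mathbb{E}(g\mid\mathcal{B}_{k,\mathcal{I}})=p$ of $\Disc{p}{\mathcal{I}}$.  The base case $|E|=0$ gives $t_H(g)=1=p^0$.  For the inductive step, choose a distinguished edge $e$: in the $\mathcal{I}$-adapted case, take $e$ to be the last edge in the adapted ordering, so that $\sh_{E\setminus\{e\}}(e)\leq\mathcal{I}$; in the strongly $\mathcal{I}$-adapted $k$-partite case, any edge works, since $\pa(\sh_E(e))\leq_s\mathcal{I}$ holds for every edge.  Setting $H'=(V,E\setminus\{e\})$, which remains (strongly) $\mathcal{I}$-adapted, the inductive goal is to reduce $t_H(g)$ to $p\cdot t_{H'}(g)$.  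Fubini in the graded probability space yields
\[t_H(g)=\int g(\vec x_e)\,H_e(\vec x_e)\,d\mu^e(\vec x_e),\qquad H_e(\vec x_e):=\int\prod_{e'\in E\setminus\{e\}}g(\vec x_{e'})\,d\mu^{V\setminus e}(\vec x_{V\setminus e}).\]

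The key claim is that $H_e$ is $\mathcal{B}_{e,\sh_{E\setminus\{e\}}(e)}$-measurable: each factor $g(\vec x_{e'})$ depends on the variables in $e$ only through $\vec x_{e\cap e'}$, and every such intersection is contained in some element of $\sh_{E\setminus\{e\}}(e)$, so after integrating out the coordinates in $V\setminus e$ the result is determined by the subtuples $\vec x_J$ for $J\in\sh_{E\setminus\{e\}}(e)$.  Now pick a bijection $\pi\colon[k]\to e$---the canonical part-bijection in the $k$-partite case, and any bijection witnessing $\sh_{E\setminus\{e\}}(e)\leq\mathcal{I}$ in the symmetric case---so that $\pi^{-1}(\sh_{E\setminus\{e\}}(e))\leq_s\mathcal{I}$, and hence by the $\sigma$-algebra inclusion lemma $H_e\circ\pi\in L^\infty(\mathcal{B}_{k,\mathcal{I}})$.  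Either because $g$ is symmetric or because the canonical ordering on edges of a $k$-partite hypergraph matches $\pi$, $g(\vec x_e)$ corresponds to $g(\vec y_{[k]})$ under this identification, and permutation-invariance of $\mu^k$ combined with $\mathbb{E}(g\mid\mathcal{B}_{k,\mathcal{I}})=p$ gives
\[\int g(\vec x_e)H_e(\vec x_e)\,d\mu^e = \int g\cdot(H_e\circ\pi)\,d\mu^k = p\int(H_e\circ\pi)\,d\mu^k = p\int H_e(\vec x_e)\,d\mu^e = p\cdot t_{H'}(g).\]
The inductive hypothesis $t_{H'}(g)=p^{|E|-1}$ then yields $t_H(g)=p^{|E|}$.

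The main obstacle will be the measurability step for $H_e$: in a graded probability space $\mathcal{B}_k$ can be strictly larger than the tensor product of the $\sigma$-algebras on lower-order factors, so the informal observation that ``$H_e$ only depends on the shadow subtuples'' does not automatically promote to $\mathcal{B}_{e,\sh_{E\setminus\{e\}}(e)}$-measurability.  The most direct route is to first establish the claim for integrands of the form $\prod_{e'}\chi_{A_{e'}}$ with $A_{e'}\in\mathcal{B}_{e'}$, where the graded probability space's slice-measurability axiom applies directly via iterated conditional measures, and then extend to arbitrary bounded products by $L^\infty$-approximation of each $g(\vec x_{e'})$ by simple functions, checking that the limit remains inside the target sub-$\sigma$-algebra.
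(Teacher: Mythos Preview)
Your inductive skeleton is the same as the paper's, but you integrate in the opposite order and this creates an obstacle that you correctly flag but do not actually overcome.  Your ``key claim'' that $H_e$ is $\mathcal{B}_{e,\sh_{E\setminus\{e\}}(e)}$-measurable does not follow from ``$H_e$ is determined by the subtuples $\vec x_J$'': in a graded probability space $\mathcal{B}_e$ is strictly larger than $\mathcal{B}_{e,\sh}$, and when $\bigcup\sh=e$ the condition ``determined by $(\vec x_J)_{J\in\sh}$'' is vacuous, so it cannot by itself force membership in the smaller $\sigma$-algebra.  Your proposed fix via simple functions runs into the same wall: for a product $\prod_{e'}\chi_{A_{e'}}$ with $A_{e'}\in\mathcal{B}_{e'}$, once two edges $e_1',e_2'$ share a vertex in $V\setminus e$ the integral over $V\setminus e$ does not factor, and the slice axiom gives you $\mathcal{B}_e$-measurability of the result but not $\mathcal{B}_{e,\sh}$-measurability.

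The paper avoids all of this by swapping the order of integration: write
\[
t_H(g)=\int\Bigl[\int g(\vec x_{e})\prod_{e'\neq e}g(\vec x_{e'})\,d\mu^{e}\Bigr]d\mu^{V\setminus e},
\]
and for each \emph{fixed} $\vec x_{V\setminus e}$ observe that $g(\vec x_{e'})$, as a function of $\vec x_e$, is a slice of $g$ with the $e'\setminus e$ coordinates frozen.  The slice axiom then places each factor directly in $\mathcal{B}_{e\cap e'}$, so the product $h_{\vec x_{V\setminus e}}(\vec x_e)=\prod_{e'\neq e}g(\vec x_{e'})$ is in $\mathcal{B}_{e,\sh}$ by construction, and $\mathbb{E}(g\mid\mathcal{B}_{e,\sh})=p$ gives the inner integral equal to $p\int h_{\vec x_{V\setminus e}}\,d\mu^e$ pointwise in $\vec x_{V\setminus e}$.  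Integrating over $\vec x_{V\setminus e}$ then yields $p\cdot t_{H'}(g)$ exactly as you want.  Incidentally, this argument (applied with an arbitrary $f\perp L^2(\mathcal{B}_{e,\sh})$ in place of $g$ on the distinguished edge) shows $\int fH_e\,d\mu^e=0$, so your measurability claim for $H_e$ is in fact true---but proving it is no easier than the paper's route, which bypasses it.
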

\begin{proof}
By induction on $|E|$.  If $|E|=0$, this is trivial.  Otherwise, in the symmetric case, $H$ is $\mathcal{I}$-adapted and therefore has some edge $e_0\in E$ so that $\sh_E(e_0)\leq\mathcal{I}$ and $(V,E\setminus\{e_0\})$ is $\mathcal{I}$-adapted.  In the non-symmetric case, $H$ is strongly $\mathcal{I}$-adapted, and so has some edge $e_0$ so that $\sh_E(e_0)\leq_s\mathcal{I}$.
Observe that
  \begin{align*}
    \int\prod_{e\in E}g(\vec x_e)d\mu^V
&=\int g(\vec x_{e_0})\prod_{e\neq e_0}g(\vec x_e)d\mu^V\\
&=\int \int g(\vec x_{e_0})\prod_{e\neq e_0}g(\vec x_e) d\mu^ed\mu^{V\setminus e_0}.
  \end{align*}
  For each $\vec x_{V\setminus e_0}\in\Omega^{V\setminus e_0}$, consider the function $h_{\vec x_{V\setminus e_0}}(\vec x_{e_0})=\prod_{e\neq e_0}g(\vec x_e)=\prod_{e\neq e_0}g(\vec x_{e_0\cap e},\vec x_{e\setminus e_0})$.  For fixed $\vec x_{V\setminus e_0}$, this has the form $\prod_{e\neq e_0}g_{e,\vec x_{V\setminus e_0}}(\vec x_{e_0\cap e})$ for appropriate $g_{e,\vec x_{V\setminus e_0}}(\vec x_{e_0\cap e})$.  We may group these into functions $g_{I,\vec x_{V\setminus e_0}}(\vec x_{e_0\cap I})$ for each $I\in sh_E(e_0)$ so that
\[h_{\vec x_{V\setminus e_0}}(\vec x_{e_0})=\prod_{I\in sh_E(e_0)}g_{I,\vec x_{V\setminus e_0}}(\vec x_{e_0\cap I}).\]
This shows that $h_{\vec x_{V\setminus e_0}}(\vec x_{e_0})$ is $\mathcal{B}_{e_0,\sh_E(e_0)}$-measurable.

Since $g$ is \Disc{p}{\mathcal{I}} we have $\mathbb{E}(g\mid\mathcal{B}_{k,\mathcal{I}})=p$.  In the symmetric case, since $\sh_E(e_0)\leq\mathcal{I}$ and $g$ is symmetric, also $\mathbb{E}(g\mid\mathcal{B}_{k,\sh_E(e_0)})=p$.  In the non-symmetric case, $\sh_E(e_0)\leq_s\mathcal{I}$ so $\mathbb{E}(g\mid\mathcal{B}_{k,\sh_E(e_0)})=p$.

Therefore for each $\vec x_{V\setminus e_0}$,
\begin{align*}
  \int g(\vec x_{e_0})\prod_{e\neq e_0}g(\vec x_{e_0\cap e})\, d\mu^{e_0}
&=  \int g(\vec x_{e_0})h_{\vec x_{V\setminus e_0}}(\vec x_{e_0}) \, d\mu^{e_0}\\
&=  \int \mathbb{E}(g\mid\mathcal{B}_{e_0,\sh_E(e_0)})(\vec x_{e_0})h_{\vec x_{V\setminus e_0}}(\vec x_{e_0}) \, d\mu^{e_0}\\
&=  \int p\,h_{\vec x_{V\setminus e_0}}(\vec x_{e_0}) \, d\mu^{e_0}\\
&=  p\int \prod_{e\neq e_0}g(\vec x_{e_0}) \, d\mu^{e_0}.\\
\end{align*}
Integrating over $\vec x_{V\setminus e_0}$,
\begin{align*}
  \int \prod_{e\in E}g(\vec x_e)d\mu^V
&=\iint g(\vec x_{e_0})\prod_{e\neq e_0}g(\vec x_e) d\mu^ed\mu^{V\setminus e_0}\\
&=p\iint \prod_{e\neq e_0}g(\vec x_{ e}) d\mu^{e_0} d\mu^{V\setminus e_0}\\
&=p\int\prod_{e\neq e_0}g(\vec x_e)d\mu^V\\
&=p\,t_{(V,E\setminus\{e_0\})}(g)\\
&=p\cdot p^{|E|-1}\\
&=p^{|E|}.
\end{align*}
\end{proof}

\section{Seminorms}\label{sec:seminorms}

In this section we show that the hypergraphs $M_k[\mathcal{I}]$ characterize \Disc{p}{\mathcal{I}} hypergraphs, in the sense that whenever $t_{\mathcal{M}_k[\mathcal{I}]}(A)=p^{2^{|\mathcal{I}|}}$ where $p=\mu(A)$, $A$ is \Disc{p}{\mathcal{I}}.

\begin{definition}
  For $f\in L^\infty(\mathcal{B}_k)$, define
\[||f||_{M_k[\mathcal{I}]}=\left|t_{M_k[\mathcal{I}]}(f)\right|^{2^{-|\mathcal{I}|}}.\]

More generally, we define the corresponding inner product for $\{f_\sigma\}_{\sigma\in 2^{\mathcal{I}}}$ where each $f_\sigma\in L^\infty(\mathcal{B}_k)$ by
\[\langle \{f_\sigma\}_{\sigma\in 2^{\mathcal{I}}}\rangle_{M_k[\mathcal{I}]}=\int\prod_{\sigma}f_\sigma(\vec x_{e_\sigma}) d\mu^{V_k[\mathcal{I}]}.\]
\end{definition}
The norms $||\cdot||_{M_k[{k\choose k-1}]}$ are essentially the \emph{Gowers uniformity norms} \cite{gowers01,MR2373376}.

Note that since the graphs $M_k[\mathcal{I}]$ are $k$-partite, these notions are defined even when $f$ is not symmetric.

%There's an issue with signs here.  Last time we had to first prove 8.5, which can be extended to imply positivity.  On the other hand for the >0 case, absolute value seems easy to incorporate

\begin{lemma}\label{thm:upperbound}
  \[\left|\langle \{f_\sigma\}_{\sigma\in 2^{\mathcal{I}}}\rangle\right|\leq\prod_\sigma ||f_\sigma||_{M_k[\mathcal{I}]}.\]
\end{lemma}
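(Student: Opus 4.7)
The plan is to prove this by iterated Cauchy--Schwarz, mimicking the standard proof of the Gowers--Cauchy--Schwarz inequality for uniformity norms. The recursive doubling description of $M_k[\mathcal{I}]$ given after Definition~\ref{fig:mfigure} is tailor-made for this argument.

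Fix $I_0\in\mathcal{I}$. Recall that each vertex $(j,\tau)\in V_k[\mathcal{I}]$ is either \emph{shared} (when $j\in I_0$, so $\tau(I_0)=\ast$) or else comes with a \emph{twin} obtained by flipping $\tau(I_0)\in\{0,1\}$. Let $V_A$ denote the shared vertices and $V_{B_0},V_{B_1}$ the twins with $\tau(I_0)=0,1$ respectively. Similarly, each edge $e_\sigma$ with $\sigma(I_0)=i$ is supported on $V_A\cup V_{B_i}$. First I would use Fubini to pull the $V_A$-integration outside:
\[\langle\{f_\sigma\}\rangle = \int \Bigl(\int\!\!\prod_{\sigma(I_0)=0}\!\!f_\sigma(\vec x_{e_\sigma})\,d\mu^{V_{B_0}}\Bigr)\Bigl(\int\!\!\prod_{\sigma(I_0)=1}\!\!f_\sigma(\vec x_{e_\sigma})\,d\mu^{V_{B_1}}\Bigr)d\mu^{V_A}.\]
Then Cauchy--Schwarz in $L^2(\mu^{V_A})$, followed by rewriting each squared factor with a fresh independent copy of the corresponding $V_{B_i}$-variables, yields
\[|\langle\{f_\sigma\}\rangle|^2 \leq \langle\{g^{(0)}_\sigma\}\rangle_{M_k[\mathcal{I}]}\cdot\langle\{g^{(1)}_\sigma\}\rangle_{M_k[\mathcal{I}]},\]
where the key point is that after relabeling the duplicated $V_{B_i}$-variables as $V_{B_{1-i}}$-variables of a fresh copy of $M_k[\mathcal{I}]$, each factor on the right is again an $M_k[\mathcal{I}]$-inner product, but now the functions indexed by $\sigma$ depend only on $\sigma|_{\mathcal{I}\setminus\{I_0\}}$: in the $i$-th factor, $g^{(i)}_\sigma$ is $f_{\sigma[I_0\mapsto i]}$ (or its complex conjugate, depending on which copy of $V_{B_i}$ it belongs to).

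Next I would iterate, applying this doubling step once for each $I\in\mathcal{I}$. After $n$ iterations we obtain $2^n$ inner-product factors indexed by $s\in\{0,1\}^n$, and after all $|\mathcal{I}|$ iterations we arrive at
\[|\langle\{f_\sigma\}\rangle|^{2^{|\mathcal{I}|}} \leq \prod_{\sigma^*\in 2^{\mathcal{I}}} \bigl|\langle\{h^{(\sigma^*)}_\sigma\}\rangle_{M_k[\mathcal{I}]}\bigr|,\]
where in the factor indexed by $\sigma^*$, every function $h^{(\sigma^*)}_\sigma$ equals $f_{\sigma^*}$ (up to conjugation). Consequently, each factor equals $|t_{M_k[\mathcal{I}]}(f_{\sigma^*})|=\|f_{\sigma^*}\|_{M_k[\mathcal{I}]}^{2^{|\mathcal{I}|}}$, and taking $2^{|\mathcal{I}|}$-th roots gives the desired inequality.

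The main obstacle is the bookkeeping: carefully tracking, across $|\mathcal{I}|$ rounds of Cauchy--Schwarz, which $f_{\sigma^*}$ ends up in which factor and verifying that each $\sigma^*\in 2^{\mathcal{I}}$ indexes exactly one factor in which \emph{all} slots are filled by $f_{\sigma^*}$ (up to conjugation). This is precisely the content of the iterative doubling construction of $M_k[\mathcal{I}]$: the choice of whether to use $V_{B_0}$ or $V_{B_1}$ at the $I$-th step corresponds to the value of $\sigma^*(I)$, and running through all branches of the binary tree of choices enumerates $2^{\mathcal{I}}$. Formalizing this cleanly—either by an explicit induction on $|\mathcal{I}|$ with a strengthened inductive hypothesis that permits arbitrary $\{f_\sigma\}$, or by a recursive notation for $h^{(\sigma^*)}_\sigma$—is the only real work; once the indexing is set up, each Cauchy--Schwarz step is a one-line application.
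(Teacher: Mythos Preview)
Your proposal is correct and is essentially the same argument as the paper's: iterated Cauchy--Schwarz along each $I\in\mathcal{I}$, using the partition $V_k[\mathcal{I}]=V_\ast\cup V_0\cup V_1$ according to $\tau(I)$. The paper organizes the bookkeeping exactly as you suggest in your last paragraph, by induction on the size of a minimal $\mathcal{I}'\subseteq\mathcal{I}$ such that $f_\sigma$ depends only on $\sigma\upharpoonright\mathcal{I}'$; the base case $\mathcal{I}'=\emptyset$ is the trivial equality, and one Cauchy--Schwarz step reduces $|\mathcal{I}'|$ by one. (Your mentions of complex conjugation are harmless but unnecessary here, since the paper works with real-valued functions.)
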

\begin{proof}
We consider some minimal $\mathcal{I}'\subseteq\mathcal{I}$ such that whenever $\sigma\upharpoonright\mathcal{I}'=\sigma'\upharpoonright\mathcal{I}'$, $f_\sigma=f_{\sigma'}$.  Such a $\mathcal{I}'$ certainly exists since $\mathcal{I}'=\mathcal{I}$ suffices.  We proceed by induction on $|\mathcal{I}'|$.

If $|\mathcal{I}'|=0$---that is, $f_\sigma=f_{\sigma'}$ for all $\sigma,\sigma'\in 2^{\mathcal{I}}$---this is trivial since, letting $f=f_\sigma$,
\[\left|\langle \{f\}_{\sigma\in 2^{\mathcal{I}}}\rangle\right|=\left|t_{M_k[\mathcal{I}]}(f)\right|=||f||_{M_k[\mathcal{I}]}^{2^{\mathcal{I}}}=\prod_\sigma||f||_{M_k[\mathcal{I}]}.\]

Suppose the claim holds for $|\mathcal{I}'|-1$ and $I\in\mathcal{I}'$; let us write $\mathcal{I}^-=\mathcal{I}\setminus\{I\}$.  Recall that for a vertex $(j,\tau)\in V_k[\mathcal{I}]$, there are three possibilities: $j\in I$ and $\tau(I)=\ast$, $j\not\in I$ and $\tau(I)=0$, or $j\not\in I$ and $\tau(I)=1$.  We partition $V_k[\mathcal{I}]=V_*\cup V_0\cup V_1$ according to which of these three cases holds.

For any $\sigma\in 2^{\mathcal{I}^-}$ and $b\in\{0,1\}$, we write $\sigma b$ for the function in $2^{\mathcal{I}}$ with $\sigma b(I')=\sigma(I')$ for $I'\in\mathcal{I}$ and $\sigma b(I)=b$.  Note that for each $\sigma$ and $b$, $e_{\sigma b}\subseteq V_*\cup V_b$.  Therefore
\begin{align*}
  |\langle \{f_{\sigma}\}_{\sigma\in 2^{\mathcal{I}}}\rangle_{M_k[\mathcal{I}]}|^{2^{|\mathcal{I}|}}
&=\left(\int \prod_{\sigma\in 2^{\mathcal{I}^-}}f_{\sigma 0}(\vec x_{e_{\sigma 0}})\prod_{\sigma\in 2^{\mathcal{I}^-}}f_{\sigma 1}(\vec x_{e_{\sigma 1}})d\mu^{V_k[\mathcal{I}]}\right)^{2^{|\mathcal{I}|}}\\
&=\left(\int \int \prod_{\sigma\in 2^{\mathcal{I}^-}}f_{\sigma 0}(\vec x_{e_{\sigma 0}})d\mu^{V_0}\cdot\right.\\
&\ \ \ \ \ \ \ \ \left.\int\prod_{\sigma\in 2^{\mathcal{I}^-}}f_{\sigma 1}(\vec x_{e_{\sigma 1}})d\mu^{V_1}d\mu^{V_*}\right)^{2^{|\mathcal{I}|}}\\
&\leq\left(\int \left(\int \prod_{\sigma\in 2^{\mathcal{I}^-}}f_{\sigma 0}(\vec x_{e_{\sigma 0}})d\mu^{V_0}\right)^2d\mu^{V_*}\cdot\right.\\
&\ \ \ \ \ \ \ \ \left.\int\left(\int\prod_{\sigma\in 2^{\mathcal{I}^-}}f_{\sigma 1}(\vec x_{e_{\sigma 1}})d\mu^{V_1}\right)^2d\mu^{V_*}\right)^{2^{|\mathcal{I}|-1}}\\
\end{align*}
with the final step following by Cauchy-Schwarz.

Observe that
\begin{align*}
  \int \left(\int \prod_{\sigma\in 2^{\mathcal{I}^-}}f_{\sigma 0}(\vec x_{e_{\sigma 0}})d\mu^{V_0}\right)^2d\mu^{V_*}
&=  \int \int \prod_{\sigma\in 2^{\mathcal{I}^-}}f_{\sigma 0}(\vec x_{e_{\sigma 0}})d\mu^{V_0}\cdot\\
&\ \ \ \ \ \ \ \ \int \prod_{\sigma\in 2^{\mathcal{I}^-}}f_{\sigma 0}(\vec x_{e_{\sigma 1}})d\mu^{V_1}d\mu^{V_*}\\
&=  \langle\{f_{(\sigma\upharpoonright \mathcal{I}^-)0}\}_{\sigma\in 2^{\mathcal{I}}}\rangle\\
&\leq\prod_{\sigma\in 2^{\mathcal{I}}}||f_{(\sigma\upharpoonright\mathcal{I}^-)0}||\\
&=\prod_{\sigma\in 2^{\mathcal{I}^-}}||f_{\sigma 0}||^2\\
\end{align*}
using the inductive hypothesis with $\mathcal{I}'\setminus\{I\}$, and similarly
\[\int \left(\int \prod_{\sigma\in 2^{\mathcal{I}^-}}f_{\sigma 1}(\vec x_{e_{\sigma 1}})d\mu^{V_1}\right)^2d\mu^{V_*}
\leq \prod_{\sigma\in 2^{\mathcal{I}^-}}||f_{\sigma1}||^2.\]
%=\langle\{f_{(\sigma\upharpoonright \mathcal{I})1}\}_{\sigma\in 2^{\mathcal{I}\cup\{I\}}}\rangle.\]

Therefore
\begin{align*}
  |\langle \{f_{\sigma}\}_{\sigma\in 2^{\mathcal{I}}}\rangle_{M_k[\mathcal{I}]}|^{2^{|\mathcal{I}|}}
%&\leq\langle\{f_{(\sigma\upharpoonright \mathcal{I})0}\}_{\sigma\in 2^{\mathcal{I}\cup\{I\}}}\rangle^{2^{|\mathcal{I}|}}\langle\{f_{(\sigma\upharpoonright \mathcal{I})1}\}_{\sigma\in 2^{\mathcal{I}\cup\{I\}}}\rangle^{2^{|\mathcal{I}|}}\\
&\leq\left(\prod_{\sigma\in 2^{\mathcal{I}^-}}||f_{\sigma0}||^2\prod_{\sigma\in 2^{\mathcal{I}^-}}||f_{\sigma1}||^2\right)^{2^{|\mathcal{I}|-1}}\\
&=\prod_{\sigma\in 2^{\mathcal{I}}}||f_{\sigma}||^{2^{|\mathcal{I}|}}.
\end{align*}
\end{proof}

\begin{cor}\label{thm:positivity}
  $\left|\int f\,d\mu^k\right|\leq||f||_{M_k[\mathcal{I}]}$.
\end{cor}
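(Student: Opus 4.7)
The plan is to derive this as a direct specialization of Lemma \ref{thm:upperbound}. Pick any fixed $\sigma_0\in 2^{\mathcal{I}}$ and set $f_{\sigma_0}=f$, while $f_\sigma\equiv 1$ for every $\sigma\neq\sigma_0$. Then substitute this tuple into the inequality
\[\left|\langle\{f_\sigma\}_{\sigma\in 2^{\mathcal{I}}}\rangle_{M_k[\mathcal{I}]}\right|\leq\prod_\sigma\|f_\sigma\|_{M_k[\mathcal{I}]}\]
supplied by Lemma \ref{thm:upperbound}. I expect both sides to simplify to exactly the quantities in the statement.

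For the right-hand side, note that $\|1\|_{M_k[\mathcal{I}]}=|t_{M_k[\mathcal{I}]}(1)|^{2^{-|\mathcal{I}|}}=1$ since the integrand is identically $1$ and $\mu^{V_k[\mathcal{I}]}$ is a probability measure. Thus $\prod_\sigma\|f_\sigma\|_{M_k[\mathcal{I}]}=\|f\|_{M_k[\mathcal{I}]}$. For the left-hand side, since $f_\sigma\equiv 1$ for $\sigma\neq\sigma_0$, the inner product reduces to $\int f(\vec x_{e_{\sigma_0}})\,d\mu^{V_k[\mathcal{I}]}$. Because $f$ depends only on the $k$ coordinates indexed by the vertices of $e_{\sigma_0}$, I apply the Fubini-type axiom of graded probability spaces (together with the permutation invariance of $\mu^k$) to marginalize out the remaining $|V_k[\mathcal{I}]|-k$ coordinates against the probability measure; this leaves $\int f\,d\mu^k$.

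Combining these two simplifications with Lemma \ref{thm:upperbound} yields exactly
\[\left|\int f\,d\mu^k\right|\leq\|f\|_{M_k[\mathcal{I}]},\]
as desired. The only step that requires any care is the marginalization in the inner product: I need to confirm, using the axioms of a graded probability space, that integrating a function of $k$ fixed coordinates against $\mu^{V_k[\mathcal{I}]}$ genuinely collapses to integration against $\mu^k$. This follows by iterated application of the identity $\int\mu^m(B(\vec x_{[n]}))\,d\mu^n=\mu^{n+m}(B)$ applied to the cylinder set on which $f$ is supported, so this obstacle is purely formal rather than substantive.
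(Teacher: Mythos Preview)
Your proof is correct and is essentially identical to the paper's: the paper takes $f_{\langle 0,\ldots,0\rangle}=f$ and $f_\sigma=1$ for all other $\sigma$ and applies Lemma \ref{thm:upperbound}, leaving the simplifications of both sides implicit. Your choice of an arbitrary $\sigma_0$ rather than the all-zeros sequence makes no difference, and the extra care you take in justifying the marginalization is fine but not something the paper spells out.
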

\begin{proof}
  Take $f_{\langle 0,\ldots,0\rangle}=f$ and $f_\sigma=1$ for all other $\sigma$ and apply the preceding lemma.
\end{proof}

\begin{lemma}
  $||\cdot||_{M_k[\mathcal{I}]}$ is a seminorm.
\end{lemma}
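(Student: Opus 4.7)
The plan is to verify the three defining properties of a seminorm: non-negativity, absolute homogeneity, and the triangle inequality. Non-negativity is immediate from the definition, since $||f||_{M_k[\mathcal{I}]}$ is defined as the absolute value $|t_{M_k[\mathcal{I}]}(f)|$ raised to a positive power. For absolute homogeneity, I would observe that by multilinearity of the integral defining $t_{M_k[\mathcal{I}]}$, scaling $f$ by $c$ produces one factor of $c$ for each of the $2^{|\mathcal{I}|}$ edges of $M_k[\mathcal{I}]$, so $t_{M_k[\mathcal{I}]}(cf) = c^{2^{|\mathcal{I}|}} t_{M_k[\mathcal{I}]}(f)$; taking absolute value and raising to the $2^{-|\mathcal{I}|}$ power gives $||cf||_{M_k[\mathcal{I}]} = |c| \cdot ||f||_{M_k[\mathcal{I}]}$.

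The substantive step is the triangle inequality, and this will follow from the Gowers--Cauchy--Schwarz bound of Lemma \ref{thm:upperbound} by a standard binomial expansion. Given $f, g \in L^\infty(\mathcal{B}_k)$, I would expand
\[
 t_{M_k[\mathcal{I}]}(f+g) = \int \prod_{\sigma \in 2^{\mathcal{I}}} (f+g)(\vec x_{e_\sigma})\, d\mu^{V_k[\mathcal{I}]} = \sum_{S \subseteq 2^{\mathcal{I}}} \langle \{h^S_\sigma\}_{\sigma \in 2^{\mathcal{I}}} \rangle_{M_k[\mathcal{I}]},
\]
where $h^S_\sigma = f$ if $\sigma \in S$ and $h^S_\sigma = g$ otherwise. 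Each summand is bounded, by Lemma \ref{thm:upperbound}, by $||f||_{M_k[\mathcal{I}]}^{|S|} \cdot ||g||_{M_k[\mathcal{I}]}^{2^{|\mathcal{I}|}-|S|}$. Applying the triangle inequality for absolute values and then summing the binomial expansion yields
\[
 \bigl| t_{M_k[\mathcal{I}]}(f+g) \bigr| \leq \sum_{S \subseteq 2^{\mathcal{I}}} ||f||_{M_k[\mathcal{I}]}^{|S|} \cdot ||g||_{M_k[\mathcal{I}]}^{2^{|\mathcal{I}|}-|S|} = \bigl( ||f||_{M_k[\mathcal{I}]} + ||g||_{M_k[\mathcal{I}]} \bigr)^{2^{|\mathcal{I}|}}.
\]
Taking $2^{-|\mathcal{I}|}$-th roots gives $||f+g||_{M_k[\mathcal{I}]} \leq ||f||_{M_k[\mathcal{I}]} + ||g||_{M_k[\mathcal{I}]}$.

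There is no real obstacle here beyond correctly identifying that the non-trivial work has already been done in Lemma \ref{thm:upperbound}: the proof of the triangle inequality is essentially the same classical Gowers-norm argument, and the Gowers--Cauchy--Schwarz inequality is exactly the statement one needs to ensure each mixed term in the binomial expansion is controlled by the appropriate product of seminorms. The only point where one must be a little careful is to note that the expansion is justified because $f,g \in L^\infty$, so the iterated integrals all converge absolutely and Fubini applies without issue.
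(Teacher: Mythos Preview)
Your proof is correct and follows exactly the same approach as the paper: homogeneity is immediate, and subadditivity is obtained by expanding $t_{M_k[\mathcal{I}]}(f+g)$ multilinearly into $2^{2^{|\mathcal{I}|}}$ mixed inner products, bounding each by Lemma~\ref{thm:upperbound}, and recognizing the resulting sum as the binomial expansion of $(||f||_{M_k[\mathcal{I}]}+||g||_{M_k[\mathcal{I}]})^{2^{|\mathcal{I}|}}$.
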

\begin{proof}
  Homogeneity is immediate, so we need only check subadditivity.  $||f+g||_{M_k[\mathcal{I}]}^{2^{|\mathcal{I}|}}$ expands to a sum of $2^{2^{|\mathcal{I}|}}$ integrals, each of which, by the previous lemma, is bounded by $||f||_{M_k[\mathcal{I}]}^m||g||_{M_k[\mathcal{I}]}^{2^{|\mathcal{I}|}-m}$ for a suitable $m$.  This sum is precisely $(||f||_{M_k[\mathcal{I}]}+||g||_{M_k[\mathcal{I}]})^{2^{|\mathcal{I}|}}$.
\end{proof}

\begin{lemma}
  Suppose $B$ is $\mathcal{B}_{k,I}$ measurable for some $I\in\mathcal{I}$.  Then for any $f\in L^\infty(\mathcal{B}_k)$,
\[||f\chi_B||_{M_k[\mathcal{I}]}\leq ||f||_{M_k[\mathcal{I}]}.\]
\end{lemma}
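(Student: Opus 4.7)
The plan is to reduce to Lemma~\ref{thm:upperbound} by a ``halving'' trick that exploits the fact that $\chi_B$ is an indicator.

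First, I would pin down the structural consequence of $B$ being $\mathcal{B}_{k,I}$-measurable. This means $\chi_B(\vec x_{e_\sigma})$ depends only on the coordinates $\vec x_{(j,\sigma_j)}$ for $j \in I$. But in the construction of $V_k[\mathcal{I}]$, whenever $j \in I$ the function $\sigma_j$ has $\sigma_j(I)=\ast$ (by definition), while $\sigma_j(I')=\sigma(I')$ for $I'\neq I$. Thus the vertex $(j,\sigma_j)$ with $j\in I$ is determined by $\sigma\restriction(\mathcal{I}\setminus\{I\})$ alone, and in particular, writing $\sigma = \sigma'b$ with $\sigma'\in 2^{\mathcal{I}\setminus\{I\}}$ and $b\in\{0,1\}$, one has $\chi_B(\vec x_{e_{\sigma'0}}) = \chi_B(\vec x_{e_{\sigma'1}})$ as functions of $\vec x_{V_k[\mathcal{I}]}$.

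Since $\chi_B$ is $\{0,1\}$-valued, pairing off the $\sigma$'s gives
\[\prod_{\sigma\in 2^{\mathcal{I}}}\chi_B(\vec x_{e_\sigma})
= \prod_{\sigma'\in 2^{\mathcal{I}\setminus\{I\}}} \chi_B(\vec x_{e_{\sigma'0}})\chi_B(\vec x_{e_{\sigma'1}})
= \prod_{\sigma'\in 2^{\mathcal{I}\setminus\{I\}}} \chi_B(\vec x_{e_{\sigma'0}}).\]
Define $g_\sigma := f\chi_B$ when $\sigma(I)=0$ and $g_\sigma := f$ when $\sigma(I)=1$. Combining the displayed identity with $\prod_\sigma f(\vec x_{e_\sigma}) = \prod_\sigma f(\vec x_{e_\sigma})$ yields $\prod_\sigma (f\chi_B)(\vec x_{e_\sigma}) = \prod_\sigma g_\sigma(\vec x_{e_\sigma})$, so $t_{M_k[\mathcal{I}]}(f\chi_B) = \langle \{g_\sigma\}\rangle_{M_k[\mathcal{I}]}$.

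Now I would invoke Lemma~\ref{thm:upperbound}. Since exactly $2^{|\mathcal{I}|-1}$ of the $\sigma$'s satisfy $\sigma(I)=0$,
\[\|f\chi_B\|_{M_k[\mathcal{I}]}^{2^{|\mathcal{I}|}}
= |\langle\{g_\sigma\}\rangle_{M_k[\mathcal{I}]}|
\leq \prod_\sigma \|g_\sigma\|_{M_k[\mathcal{I}]}
= \|f\chi_B\|_{M_k[\mathcal{I}]}^{2^{|\mathcal{I}|-1}}\,\|f\|_{M_k[\mathcal{I}]}^{2^{|\mathcal{I}|-1}}.\]
If $\|f\chi_B\|_{M_k[\mathcal{I}]}=0$ the conclusion is trivial; otherwise dividing yields $\|f\chi_B\|_{M_k[\mathcal{I}]}\leq \|f\|_{M_k[\mathcal{I}]}$.

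The only delicate step is the first one: verifying that the edges $e_{\sigma'0}$ and $e_{\sigma'1}$ share precisely the same vertices in the ``$I$-slice'' of $V_k[\mathcal{I}]$, so that $\chi_B$ evaluates identically on them. Once that observation is in hand, the ``indicator squaring'' trick collapses the product and Lemma~\ref{thm:upperbound} finishes the argument.
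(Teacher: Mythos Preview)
Your argument is correct. The structural observation you identify --- that for $j\in I$ the vertex $(j,\sigma_j)$ does not depend on $\sigma(I)$, so $\chi_B(\vec x_{e_{\sigma'0}})=\chi_B(\vec x_{e_{\sigma'1}})$ --- is exactly the same one the paper uses. (Your phrasing ``$\sigma_j(I')=\sigma(I')$ for $I'\neq I$'' is slightly imprecise, since $\sigma_j(I')=\ast$ whenever $j\in I'$; but your conclusion that $(j,\sigma_j)$ for $j\in I$ is determined by $\sigma\restriction(\mathcal{I}\setminus\{I\})$ is correct, and that is all you use.)

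The route, however, is genuinely different. The paper writes $f=f\chi_B+f\chi_{\overline B}$, expands $\|f\|_{M_k[\mathcal{I}]}^{2^{|\mathcal{I}|}}$ into $2^{2^{|\mathcal{I}|}}$ cross terms $\langle\{f\chi_{S_\sigma}\}\rangle$ with $S_\sigma\in\{B,\overline B\}$, and shows directly that every term is nonnegative: a term vanishes unless $S_\sigma$ depends only on $\hat\sigma=\sigma\restriction(\mathcal{I}\setminus\{I\})$, and in that case the term factors as a square over $V_\ast$. Since one of those nonnegative summands is $\|f\chi_B\|^{2^{|\mathcal{I}|}}$, the inequality follows. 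Your approach instead collapses the $\chi_B$ factors using $\chi_B^2=\chi_B$, rewrites $t_{M_k[\mathcal{I}]}(f\chi_B)$ as a Gowers inner product $\langle\{g_\sigma\}\rangle$ with half the entries equal to $f$, and invokes Lemma~\ref{thm:upperbound} as a black box. Your argument is shorter and leans more on the machinery already in place; the paper's argument is more hands-on and yields as a byproduct that $t_{M_k[\mathcal{I}]}(f\chi_B)\geq 0$ (indeed, that every cross term in the $B/\overline B$ expansion is nonnegative), which is a mild strengthening not needed here.
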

\begin{proof}
  We have $f=(f\chi_B)+(f\chi_{\overline{B}})$, so $||f||_{M_k[\mathcal{I}]}^{2^{|\mathcal{I}|}}$ expands to a sum of integrals of the form
\[\int\prod_{\sigma\in 2^{\mathcal{I}}}(f\chi_{S_\sigma})(\vec x_{e_\sigma})d\mu^{V_k[\mathcal{I}]}\]
where each $S_\sigma$ is either $B$ or $\overline{B}$.

Consider one of these terms.  Note that $\chi_{S_\sigma}$ only depends on coordinates in $I$, so $\chi_{S_\sigma}$ is a function of $I\cap e_\sigma$ (where $I$ is viewed as a subset of $V_k[\mathcal{I}]$ using the fact that $V_k[\mathcal{I}]$ is $k$-partite).  If there are any $\sigma,\sigma'$ which agree on $\mathcal{I}\setminus\{I\}$ but $S_\sigma\neq S_{\sigma'}$ then, since $I\cap e_\sigma=I\cap e_{\sigma'}$, for any $\vec x_{V_k[\mathcal{I}]}$, either $\chi_{S_\sigma}(\vec x_{V_k[\mathcal{I}]})=0$ or $\chi_{S_{\sigma'}}(\vec x_{V_k[\mathcal{I}]})=0$, and so the whole term is $0$.

Suppose not.  For each $\sigma\in 2^{\mathcal{I}}$, let $\hat\sigma=\sigma\upharpoonright(\mathcal{I}\setminus\{I\})$, so $\chi_{S_\sigma}$ depends only on $\hat\sigma$.  We partition $V_k[\mathcal{I}]=V_\ast\cup V_0\cup V_1$ depending on the value of $\tau(I)$.  So we have
\begin{align*}
\int\prod_{\sigma\in 2^{\mathcal{I}}}f\chi_{S_{\hat\sigma}}d\mu^{V_k[\mathcal{I}]}
&=\int \int\prod_{\hat\sigma\in 2^{\mathcal{I}\setminus\{I\}}}f\chi_{S_{\hat\sigma}}d\mu^{V_0}\cdot\\
&\ \ \ \ \ \ \ \ \int\prod_{\hat\sigma\in 2^{\mathcal{I}\setminus\{I\}}}f\chi_{S_{\hat\sigma}}d\mu^{V_1}d\mu^{V_*}\\
&=\int \left(\int\prod_{\hat\sigma\in 2^{\mathcal{I}\setminus\{I\}}}f\chi_{S_{\hat\sigma}}d\mu^{V_0}\right)^2d\mu^{V_*}\\
&\geq 0.
\end{align*}

Since all terms are nonnegative and one is $||f\chi_B||_{M_k[\mathcal{I}]}$, we have
\[||f\chi_B||_{M_k[\mathcal{I}]}\leq ||f||_{M_k[\mathcal{I}]}.\]
\end{proof}

\begin{theorem}\label{thm:main_for_0}
  $||f||_{M_k[\mathcal{I}]}=0$ iff $f$ is \Disc{0}{\mathcal{I}}.
\end{theorem}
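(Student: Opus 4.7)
The plan is to combine Theorem \ref{thm:counting} for the forward direction with the two preceding lemmas (the positivity corollary and the indicator-multiplication lemma) for the reverse direction.

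For the forward direction, suppose $f$ is \Disc{0}{\mathcal{I}}. The hypergraph $M_k[\mathcal{I}]$ is $k$-partite and strongly $\mathcal{I}$-adapted (by the lemma at the end of Section \ref{sec:subsetfree}), and it has exactly $2^{|\mathcal{I}|}$ edges, one for each $\sigma:\mathcal{I}\to\{0,1\}$. Applying Theorem \ref{thm:counting} with $p=0$ yields $t_{M_k[\mathcal{I}]}(f) = 0^{2^{|\mathcal{I}|}} = 0$, hence $\|f\|_{M_k[\mathcal{I}]} = 0$.

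For the reverse direction, suppose $\|f\|_{M_k[\mathcal{I}]} = 0$. Every generating set of $\mathcal{B}_{k,\mathcal{I}}$ has the form $B = \{\vec x_{[k]} \mid \vec x_I \in B_I \text{ for each } I \in \mathcal{I}\}$, and its indicator factors as $\chi_B = \prod_{I \in \mathcal{I}} \chi_{B_I}(\vec x_I)$, where each factor is $\mathcal{B}_{k,\{I\}}$-measurable as a function on $\Omega^k$. Iterating the previous lemma one $I \in \mathcal{I}$ at a time (absorbing each factor $\chi_{B_I}$ into $f$ in turn) gives
\[\|f\chi_B\|_{M_k[\mathcal{I}]} \leq \|f\|_{M_k[\mathcal{I}]} = 0,\]
so Corollary \ref{thm:positivity} applied to $f\chi_B$ yields $\int f\chi_B\,d\mu^k = 0$.

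To pass from generating sets to the full $\sigma$-algebra, observe that the generating sets form a $\pi$-system (the intersection of two generators is a generator with $B_I\cap B'_I$ in each slot), so a standard monotone-class argument extends $\int f\chi_A\,d\mu^k = 0$ to every $A \in \mathcal{B}_{k,\mathcal{I}}$. This is precisely $\mathbb{E}(f\mid\mathcal{B}_{k,\mathcal{I}}) = 0$, the definition of $f$ being \Disc{0}{\mathcal{I}}. The substantive analytic content is already packaged in the Cauchy--Schwarz chain and the indicator-multiplication lemma; the only subtle point will be verifying that each $\chi_{B_I}$ can be stripped independently by a single application of that lemma, which is immediate since each factor is already $\mathcal{B}_{k,\{I\}}$-measurable on its own.
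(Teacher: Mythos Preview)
Your proof is correct and essentially the same as the paper's. The paper argues the reverse direction by contrapositive---assuming $f$ is not \Disc{0}{\mathcal{I}}, it invokes the existence of sets $A_I$ with $\left|\int f\prod_I\chi_{A_I}\,d\mu^k\right|>0$ and then applies Corollary~\ref{thm:positivity} and the indicator-multiplication lemma in the same way you do---so your monotone-class paragraph is just an explicit justification of the one step the paper states without comment.
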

\begin{proof}
  If $f$ is \Disc{0}{\mathcal{I}} then $||f||_{M_k[\mathcal{I}]}=t_{M_k[\mathcal{I}]}(f)^{2^{-|\mathcal{I}|}}=0$ by Theorem \ref{thm:counting} since $M_k[\mathcal{I}]$ is strongly $\mathcal{I}$-adapted.

Conversely, if $f$ is not \Disc{0}{\mathcal{I}} then there must exist sets $A_I\in\mathcal{B}_{k,I}$ so
\[\left|\int f\prod_{I\in\mathcal{I}}\chi_{A_I}d\mu^k\right|>0.\]
By Corollary \ref{thm:positivity}, $||f\prod_{I\in\mathcal{I}}\chi_{A_I}||_{M_k[\mathcal{I}]}>0$, and by the previous lemma, this means that $||f||_{M_k[\mathcal{I}]}>0$.
\end{proof}

\begin{lemma}\label{thm:upgrade}
  If $\int f d\mu^k=p$ and $||f||_{M_k[\mathcal{I}]}=p$ then for every $\mathcal{I}'\subseteq\mathcal{I}$, $||f||_{M_k[\mathcal{I}']}=p$.
\end{lemma}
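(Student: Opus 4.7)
The plan is to deduce the lemma from a monotonicity principle for the $M_k[\mathcal{I}]$-norms: whenever $\mathcal{I}'\subseteq\mathcal{I}$ are both subset-free on $[k]$, one should have $\|f\|_{M_k[\mathcal{I}']}\leq\|f\|_{M_k[\mathcal{I}]}$. Granting this, the hypothesis $\|f\|_{M_k[\mathcal{I}]}=p$ gives the upper bound $\|f\|_{M_k[\mathcal{I}']}\leq p$, while Corollary \ref{thm:positivity} applied to $\mathcal{I}'$ gives $\|f\|_{M_k[\mathcal{I}']}\geq|\int f\,d\mu^k|=p$, so the two bounds pinch together.

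To prove monotonicity, I induct on $|\mathcal{I}\setminus\mathcal{I}'|$; it suffices to handle the one-step case $\mathcal{I}^-=\mathcal{I}\setminus\{I\}$ for an arbitrary $I\in\mathcal{I}$. Here I reuse the $V_\ast\cup V_0\cup V_1$ decomposition of $V_k[\mathcal{I}]$ from the proof of Lemma \ref{thm:upperbound}, partitioned by the value of $\tau(I)$. Each edge $e_{\sigma b}$ with $\sigma\in 2^{\mathcal{I}^-}$ and $b\in\{0,1\}$ is contained in $V_\ast\cup V_b$, and the natural identification of $V_\ast\cup V_0$ with $V_k[\mathcal{I}^-]$ sends $e_{\sigma 0}$ to the edge $e_\sigma$ of $M_k[\mathcal{I}^-]$. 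Setting
\[g(\vec x_{V_\ast})=\int\prod_{\sigma\in 2^{\mathcal{I}^-}}f(\vec x_{e_{\sigma 0}})\,d\mu^{V_0},\]
the same computation that opens the proof of Lemma \ref{thm:upperbound} yields
\[t_{M_k[\mathcal{I}]}(f)=\int g(\vec x_{V_\ast})^2\,d\mu^{V_\ast},\qquad \int g(\vec x_{V_\ast})\,d\mu^{V_\ast}=t_{M_k[\mathcal{I}^-]}(f).\]
Applying Cauchy--Schwarz on $\Omega^{V_\ast}$ (a probability space) gives $t_{M_k[\mathcal{I}^-]}(f)^2\leq t_{M_k[\mathcal{I}]}(f)$, and raising both sides to the $2^{-|\mathcal{I}|}$ power (the right side is manifestly nonnegative, being an integral of a square) produces $\|f\|_{M_k[\mathcal{I}^-]}\leq\|f\|_{M_k[\mathcal{I}]}$.

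Iterating this one-step bound along any chain from $\mathcal{I}'$ to $\mathcal{I}$ completes the argument. The only real subtlety is verifying that the edges of $M_k[\mathcal{I}^-]$ are correctly matched with the $e_{\sigma 0}$, which is immediate from the definition of $V_k[\mathcal{I}]$ (a vertex with $\tau(I)=0$ or $\tau(I)=\ast$ corresponds under forgetting the $I$-coordinate to a vertex of $V_k[\mathcal{I}^-]$); no other obstacle is anticipated.
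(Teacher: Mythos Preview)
Your proof is correct and follows essentially the same route as the paper: both reduce to the one-step case $\mathcal{I}^-=\mathcal{I}\setminus\{I\}$, use the decomposition $V_k[\mathcal{I}]=V_\ast\cup V_0\cup V_1$ to write $t_{M_k[\mathcal{I}]}(f)=\int g^2\,d\mu^{V_\ast}$ with $\int g\,d\mu^{V_\ast}=t_{M_k[\mathcal{I}^-]}(f)$, and then apply Cauchy--Schwarz together with Corollary~\ref{thm:positivity}. The only cosmetic difference is that you package the Cauchy--Schwarz step as a standalone monotonicity inequality $\|f\|_{M_k[\mathcal{I}^-]}\leq\|f\|_{M_k[\mathcal{I}]}$, whereas the paper phrases the same computation as a contradiction from $\|f\|_{M_k[\mathcal{I}']}>p$.
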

\begin{proof}
It suffices to consider the case $\mathcal{I}=\mathcal{I}'\cup\{I\}$.  By Corollary \ref{thm:positivity}, $||f||_{M_k[\mathcal{I}']}\geq\mu^k(A)=p$, so suppose $||f||_{M_k[\mathcal{I}']}>p$.  Then
\begin{align*}
||f||_{M_k[\mathcal{I}]}^{2^{|\mathcal{I}'|}}
&=\int \prod_{\sigma\in 2^{\mathcal{I}}}f(\vec x_{e_{\sigma}})d\mu^{V_k[\mathcal{I}]}\\
&=\int \left(\int \prod_{\sigma\in 2^{\mathcal{I}'}}f(\vec x_{e_{\sigma}})d\mu^{V_0}\right)^2 d\mu^{V_*}\\
&\geq\left(\int \prod_{\sigma\in 2^{\mathcal{I}'}}f(\vec x_{e_{\sigma}})d\mu^{V_k[\mathcal{I}']}\right)^2\\
&=\left(||f||_{M_k[\mathcal{I}']}^{2^{|\mathcal{I}'|}}\right)^2\\
&>p^{2^{|\mathcal{I}|}}.
\end{align*}
\end{proof}

\begin{lemma}
  Suppose $f$ is \Disc{0}{\mathcal{I}'} for all $\mathcal{I}'\subsetneq\mathcal{I}$ and suppose that for every $\sigma\in 2^{\mathcal{I}}$, $h_\sigma\in\{1,f\}$.  Further, suppose there is at least one $\sigma$ with $h_\sigma=1$ and at least one $\sigma$ with $h_\sigma=f$.  Then
\[\langle \{h_\sigma\}_{\sigma\in 2^{\mathcal{I}}}\rangle_{M_k[\mathcal{I}]}=0.\]
\end{lemma}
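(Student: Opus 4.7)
The plan is to isolate one factor of $f$ from the product, absorb the remaining factors into a conditional expectation of $f$ over a $\sigma$-algebra $\mathcal{B}_{k,\mathcal{I}\setminus\{I^*\}}$ strictly smaller than $\mathcal{B}_{k,\mathcal{I}}$, and then apply the hypothesis. Let $S := \{\sigma \in 2^{\mathcal{I}} : h_\sigma = f\}$; by hypothesis $S$ is a nonempty proper subset of the cube $2^{\mathcal{I}}$. Since the discrete cube is connected, there exist adjacent vertices $\sigma^*\in S$ and $\sigma^{**}\notin S$ differing in exactly one coordinate $I^* \in \mathcal{I}$; I will use this pair throughout.

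Writing
\[
\langle\{h_\sigma\}\rangle_{M_k[\mathcal{I}]} = \int_{V_k[\mathcal{I}]\setminus e_{\sigma^*}} \left(\int_{e_{\sigma^*}} f(\vec x_{e_{\sigma^*}})\, h(\vec x_{e_{\sigma^*}})\, d\mu^{e_{\sigma^*}}\right) d\mu^{V_k[\mathcal{I}]\setminus e_{\sigma^*}}
\]
with $h(\vec x_{e_{\sigma^*}}) := \prod_{\sigma\in S\setminus\{\sigma^*\}} f(\vec x_{e_\sigma})$ (the outside coordinates held fixed), the crucial step is to show that, for almost every such fixing, $h$ viewed as a function of $\vec x_{e_{\sigma^*}}$ lies in $L^\infty(\mathcal{B}_{e_{\sigma^*},\mathcal{I}\setminus\{I^*\}})$. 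Each factor $f(\vec x_{e_\sigma})$ contributes to $h$ only through $\vec x_{e_{\sigma^*}\cap e_\sigma}$, whose set of partition indices is $\pa(e_{\sigma^*}\cap e_\sigma) = \bigcap_{I \in \mathcal{I}:\sigma^*(I)\neq\sigma(I)} I$. Because $\sigma\neq\sigma^{**}$ (as $\sigma\in S$ but $\sigma^{**}\notin S$), $\sigma$ and $\sigma^*$ cannot differ only at $I^*$, so there is some $I_\sigma \in \mathcal{I}\setminus\{I^*\}$ with $\sigma^*(I_\sigma)\neq\sigma(I_\sigma)$, giving $\pa(e_{\sigma^*}\cap e_\sigma)\subseteq I_\sigma$. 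Thus the collection of these partition sets is $\leq_s \mathcal{I}\setminus\{I^*\}$, and the stated inclusion $\mathcal{B}_{k,\mathcal{I}}\subseteq\mathcal{B}_{k,\mathcal{J}}$ when $\mathcal{I}\leq_s\mathcal{J}$ yields the desired measurability of $h$.

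With the measurability established, $\mathcal{I}\setminus\{I^*\}\subsetneq\mathcal{I}$ together with the hypothesis give $\mathbb{E}(f\mid\mathcal{B}_{k,\mathcal{I}\setminus\{I^*\}})=0$, so for each outside fixing
\[
\int f(\vec x_{e_{\sigma^*}})\, h(\vec x_{e_{\sigma^*}})\, d\mu^{e_{\sigma^*}} = \int \mathbb{E}(f\mid\mathcal{B}_{e_{\sigma^*},\mathcal{I}\setminus\{I^*\}})\cdot h\, d\mu^{e_{\sigma^*}} = 0,
\]
and the outer integral vanishes. The one spot requiring care is the degenerate case $|\mathcal{I}|=1$, where $\mathcal{I}\setminus\{I^*\}=\emptyset$ is not subset-free under the paper's convention: there $h$ is the empty product $1$ and the required identity reduces to $\int f\, d\mu^k = 0$, which is read as the $\mathcal{I}'=\emptyset$ instance of the hypothesis under the natural convention that $\mathcal{B}_{k,\emptyset}$ is the trivial $\sigma$-algebra and $\Disc{0}{\emptyset}$ means $\int f = 0$.
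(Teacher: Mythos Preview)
Your argument is correct and follows essentially the same route as the paper's own proof: both locate an adjacent pair $\sigma^*,\sigma^{**}$ in the cube with $h_{\sigma^*}=f$ and $h_{\sigma^{**}}=1$, peel off the $e_{\sigma^*}$ integral, observe that the remaining product (as a function of $\vec x_{e_{\sigma^*}}$) is $\mathcal{B}_{k,\mathcal{I}\setminus\{I^*\}}$-measurable, and invoke \Disc{0}{\mathcal{I}\setminus\{I^*\}}. Your writeup is in fact a bit more explicit than the paper's on the measurability step (via the formula $\pa(e_{\sigma^*}\cap e_\sigma)=\bigcap_{I:\sigma^*(I)\neq\sigma(I)}I$) and on the degenerate case $|\mathcal{I}|=1$, which the paper handles only implicitly through the convention that $\mathcal{B}_{k,\emptyset}$ is the trivial $\sigma$-algebra.
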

\begin{proof}
  There must be some $I_0\in\mathcal{I}$ and some $\sigma^0\in 2^{\mathcal{I}}$ such that $h_{\sigma^0}=f$ and, letting $\sigma^1(I)=\sigma^0(I)$ for $I\neq I_0$ and $\sigma^1(I_0)=1-\sigma^0(I_0)$, $h_{\sigma^1}=1$.

  We recall some details about $M_k[\mathcal{I}]$: for any $\sigma$, $M_k[\mathcal{I}]$ contains an edge $e_\sigma$ whose vertices are $(j,\sigma_j)$ where $\sigma_j$ is defined by:
\[    \sigma_j(I)=\left\{\begin{array}{ll}
\sigma(I)&\text{if }j\not\in I\\
\ast&\text{if }j\in I.
\end{array}\right.\]

In particular, $e_{\sigma^0}\cap e_{\sigma^1}=\{(j,\sigma^0_j)\mid j\in I_0\}$.  We write $s_{I_0}=e_{\sigma^0}\cap e_{\sigma^1}$.  If $\tau\not\in\{\sigma^0,\sigma^1\}$ then $s_{I_0}\not\subseteq e_\tau$ (pick any $I\neq I_0$ with $\tau(I)\neq\sigma^0(I)$---one exists since $\tau\neq\sigma^0$ and $\tau\neq\sigma^1$, and pick any $j\in I_0\setminus I$; then $\sigma^0_j(I)\neq\tau_j(I)$, so $(j,\sigma^0_j)\neq(j,\tau_j)$).

Then
\begin{align*}
  \langle \{h_\sigma\}_{\sigma\in 2^{\mathcal{I}}}\rangle_{M_k[\mathcal{I}]}
&=\int\prod_{\sigma\in 2^{\mathcal{I}}}h_\sigma(\vec x_{e_\sigma}) d\mu^{V_k[\mathcal{I}]}\\
&=\int \int h_{\sigma^0}(\vec x_{e_{\sigma^0}}) d\mu^{e_{\sigma^0}}\prod_{\sigma\neq\sigma^0}h_\sigma(\vec x_{e_\sigma}) d\mu^{V_k[\mathcal{I}]\setminus e_{\sigma^0}}.\\
\end{align*}
For any fixed $\vec x_{V_k[\mathcal{I}]\setminus e_{\sigma^0}}$, note that the product $\prod_{\sigma\neq\sigma^0}h_\sigma(\vec x_{e_\sigma})$ is $\mathcal{B}_{k,\mathcal{I}\setminus\{I_0\}}$-measurable, and since $h_{\sigma^0}=f$ is \Disc{0}{\mathcal{I}\setminus\{I\}}, the inner integral is always $0$, so the whole integral is as well.
\end{proof}

\begin{theorem}\label{thm:mk}
  Suppose $||\chi_A||_{M_k[\mathcal{I}]}=p$ where $\mu(A)=p$.  Then $A$ is \Disc{p}{\mathcal{I}}.
\end{theorem}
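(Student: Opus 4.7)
The plan is to induct on $|\mathcal{I}|$. Writing $f=\chi_A-p$, the theorem amounts to showing $||f||_{M_k[\mathcal{I}]}=0$, which by Theorem~\ref{thm:main_for_0} is equivalent to $f$ being $\Disc{0}{\mathcal{I}}$.

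First, Lemma~\ref{thm:upgrade} lifts the hypothesis to $||\chi_A||_{M_k[\mathcal{I}']}=p$ for every $\mathcal{I}'\subseteq\mathcal{I}$. Applied inductively to each proper nonempty $\mathcal{I}'\subsetneq\mathcal{I}$, this gives that $\chi_A$ is $\Disc{p}{\mathcal{I}'}$, i.e.\ $f$ is $\Disc{0}{\mathcal{I}'}$, for every nonempty $\mathcal{I}'\subsetneq\mathcal{I}$; the edge case $\mathcal{I}'=\emptyset$ collapses to $\int f\,d\mu^k=0$, which holds since $\mu(A)=p$. These together put $f$ exactly in position to invoke the lemma immediately preceding the theorem.

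Next, I expand
\[t_{M_k[\mathcal{I}]}(\chi_A)=\int \prod_{\sigma\in 2^{\mathcal{I}}} (f+p)(\vec x_{e_\sigma})\,d\mu^{V_k[\mathcal{I}]}\]
multilinearly as a sum over $S\subseteq 2^{\mathcal{I}}$ of terms $p^{2^{|\mathcal{I}|}-|S|}\langle\{h_\sigma\}_{\sigma\in 2^{\mathcal{I}}}\rangle_{M_k[\mathcal{I}]}$, where $h_\sigma=f$ for $\sigma\in S$ and $h_\sigma=1$ otherwise. The preceding lemma annihilates every term with $0<|S|<2^{|\mathcal{I}|}$, leaving
\[t_{M_k[\mathcal{I}]}(\chi_A)=p^{2^{|\mathcal{I}|}}+t_{M_k[\mathcal{I}]}(f).\]

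Finally, I observe that $t_{M_k[\mathcal{I}]}(g)\geq 0$ for any $g\in L^\infty(\mathcal{B}_k)$: picking any $I\in\mathcal{I}$ and partitioning $V_k[\mathcal{I}]=V_*\cup V_0\cup V_1$ by the value of $\tau(I)$, the edges $e_\sigma$ with $\sigma(I)=0$ lie in $V_*\cup V_0$ and those with $\sigma(I)=1$ lie in $V_*\cup V_1$; Fubini together with the symmetry swapping $V_0$ and $V_1$ then gives
\[t_{M_k[\mathcal{I}]}(g)=\int \left(\int \prod_{\sigma(I)=0} g(\vec x_{e_\sigma})\,d\mu^{V_0}\right)^{2} d\mu^{V_*}\geq 0.\]
Applied to $g=f$, alongside $|t_{M_k[\mathcal{I}]}(\chi_A)|=p^{2^{|\mathcal{I}|}}$ and the identity above, this forces $t_{M_k[\mathcal{I}]}(f)=0$, hence $||f||_{M_k[\mathcal{I}]}=0$. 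The main obstacle is the cancellation of the cross terms, which is handled by the preceding lemma once the induction delivers $\Disc{0}{\mathcal{I}'}$ for every $\mathcal{I}'\subsetneq\mathcal{I}$; the positivity step is the by-now-familiar ``square an integral'' trick.
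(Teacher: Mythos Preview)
Your proof is correct and follows the same inductive skeleton as the paper: use Lemma~\ref{thm:upgrade} to propagate the hypothesis to all $\mathcal{I}'\subseteq\mathcal{I}$, apply the inductive hypothesis to get $\Disc{0}{\mathcal{I}'}$ for proper $\mathcal{I}'$, expand multilinearly, and kill the cross terms with the preceding lemma.

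The one genuine difference is that you use the two-way split $\chi_A=p+f$ with $f=\chi_A-p$, whereas the paper splits three ways, $\chi_A=p+f_0+g$ with $f_0=\mathbb{E}(\chi_A-p\mid\mathcal{B}_{k,\mathcal{I}})$ and $g=\chi_A-p-f_0$. The paper then separately disposes of all terms containing a $g$ via $||g||_{M_k[\mathcal{I}]}=0$ and Lemma~\ref{thm:upperbound}, before invoking the preceding lemma on the mixed $(p,f_0)$ terms. Your route is cleaner: since your $f$ (which is the paper's $f_0+g$) already satisfies the hypothesis of the preceding lemma, the $g$-bookkeeping is unnecessary. The paper's extra step buys nothing here; your version is a strict simplification.

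One small remark: your general positivity argument for $t_{M_k[\mathcal{I}]}(g)$ is more than you need. Since $\chi_A\geq 0$ pointwise, $t_{M_k[\mathcal{I}]}(\chi_A)\geq 0$ is immediate, and that alone resolves the absolute value in $||\chi_A||_{M_k[\mathcal{I}]}=p$. The squaring trick is correct but superfluous for this step.
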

\begin{proof}
  By induction on $|\mathcal{I}|$.  When $\mathcal{I}=\emptyset$ this is trivial because \Disc{p}{\emptyset} simply means that $\mu(A)=p$. Suppose the claim holds for each $\mathcal{I}'$ with $|\mathcal{I}'|<|\mathcal{I}|$ and let $A$ be given with $\mu(A)=p$ and $||\chi_A||_{M_k[\mathcal{I}]}=p$.  Then by Lemma \ref{thm:upgrade}, for each $\mathcal{I}'\subseteq \mathcal{I}$, $||\chi_A||_{M_k[\mathcal{I}']}=p$, and so by IH, $A$ is \Disc{p}{\mathcal{I}'} for all $\mathcal{I}'\subsetneq\mathcal{I}$.

We split $\chi_A$ into three components, $p$, $f=\mathbb{E}(\chi_A-p\mid\mathcal{B}_{k,\mathcal{I}})$, and $g=\chi_A-f-p$.  Then $g$ is \Disc{0}{\mathcal{I}} and by the inductive hypothesis, $f$ is \Disc{0}{\mathcal{I}'} for all $\mathcal{I}'\subsetneq\mathcal{I}$.  Then
\[p^{2^{|\mathcal{I}|}}=||\chi_A||_{M_k[\mathcal{I}]}^{2^{|\mathcal{I}|}}=||p+f+g||_{M_k[\mathcal{I}]}^{2^{|\mathcal{I}|}}.\]
The integral $||p+f+g||_{M_k[\mathcal{I}]}^{2^{|\mathcal{I}|}}$ expands into a sum of terms of the form
\[\langle \{h_\sigma\}_{\sigma\in 2^{\mathcal{I}}}\rangle_{M_k[\mathcal{I}]}\]
where each $h_\sigma\in\{p,f,g\}$.  By Theorem \ref{thm:main_for_0}, $||g||_{M_k[\mathcal{I}]}=0$, so a term with any $h_\sigma$ equal to $g$ must be $0$ by Lemma \ref{thm:upperbound}.  We can consider only the terms where each $h_\sigma\in\{p,f\}$.

Consider the mixed terms, where there is some $h_\sigma=p$ and some $h_{\sigma'}=f$.  By the preceding lemma, such terms must be $0$ (such a term is of the form $p^n\langle \{h'_\sigma\}\rangle$ where each $h'_\sigma\in\{1,f\})$.  Therefore only the unmixed terms remain:
\[p^{2^{|\mathcal{I}|}}=||\chi_A||_{M_k[\mathcal{I}]}^{2^{|\mathcal{I}|}}=||p||_{M_k[\mathcal{I}]}^{2^{|\mathcal{I}|}}+||f||_{M_k[\mathcal{I}]}^{2^{\mathcal{I}}}=p^{2^{|\mathcal{I}|}}+||f||_{M_k[\mathcal{I}]}^{2^{\mathcal{I}}}\]
and therefore $||f||_{M_k[\mathcal{I}]}^{2^{\mathcal{I}}}=0$.  By Theorem \ref{thm:main_for_0}, $f$ is \Disc{0}{\mathcal{I}}.  Since $f$ is also $\mathcal{B}_{k,\mathcal{I}}$-measurable, $f$ is $0$, and since $\mathbb{E}(\chi_A\mid\mathcal{B}_{k,\mathcal{I}})=p+f=p$, $\chi_A$ is \Disc{p}{\mathcal{I}}.
\end{proof}

\section{Separating Randomness Notions}\label{sec:separating}

In this section we show that the notions \Disc{p}{\mathcal{I}} are all distinct.  Our construction here is essentially same as the one used in Section 3 of \cite{2012arXiv1208.5978L}, where many particular cases of this result are shown.

\begin{lemma}
Let $(\Omega,\{\mathcal{B}_k\},\{\mu^k\})$ be an ultraproduct of the sets $\{V_n\}$ and for each $n$ let $<_n$ be a linear ordering of $V_n$.  Let $L_n=\{(x,y)\mid x<_ny\}$, $L=\lim\langle L_n\rangle$, and set $x<y$ iff $(x,y)\in L$.  Let $\pi$ be a permutation of $[k]$ and let $C_\pi$ be the set of $\vec x_{[k]}$ such that $x_i<x_j$ iff $\pi(i)<\pi(j)$.  Then $C_\pi$ is in $\mathcal{B}_{2,1}$.
\end{lemma}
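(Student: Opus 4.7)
The plan is to prove the statement in two steps. First, I would show that the limit ordering $L$ itself lies in $\mathcal{B}_{2,1}$. Then $C_\pi$, as a Boolean combination of pullbacks of $L$ to pairs of coordinates, will lie in the natural analogue $\mathcal{B}_{k,1}$ on $\Omega^k$ (the conclusion as written is dimensionally odd since $C_\pi\subseteq\Omega^k$, so I read it as $C_\pi\in\mathcal{B}_{k,1}$).

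For the first step, introduce a normalized rank at the finite level: for $v\in V_n$ set $r_n(v)=|\{w\in V_n : w<_n v\}|/|V_n|\in[0,1)$, and for each rational $q\in[0,1]$ let $A_{n,q}=\{v : r_n(v)<q\}$ with ultralimit $A_q=\lim\langle A_{n,q}\rangle\in\mathcal{B}_1$. Since $|A_{n,q}|/|V_n|\to q$, Theorem~\ref{thm:ultraproduct} yields $\mu(A_q)=q$. The key finite-level observation is that for rationals $q_1<q_2$, if $v\in A_{n,q_1}$ and $w\notin A_{n,q_2}$ then $r_n(v)<q_1<q_2\leq r_n(w)$, so $v<_n w$; thus $A_{n,q_1}\times(V_n\setminus A_{n,q_2})\subseteq L_n$ for every $n$. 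Taking ultralimits, using that $\lim$ commutes with intersection and complement, gives $A_{q_1}\times(\Omega\setminus A_{q_2})\subseteq L$, so the countable union
\[U:=\bigcup_{q_1<q_2\text{ in }\mathbb{Q}\cap[0,1]}A_{q_1}\times(\Omega\setminus A_{q_2})\]
is contained in $L$ and lies in $\mathcal{B}_{2,1}$ as a countable union of $\mathcal{B}_1$-rectangles.

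To upgrade $U\subseteq L$ to equality modulo null sets, I would compare measures. On the finite side, $|L_n|/|V_n|^2\to 1/2$; on the infinite side, a Fubini calculation using $\mu(A_q)=q$ gives $\mu^2(U)=1/2$ as well (one checks $U=\{(x,y):r(x)<r(y)\}$ for the measurable function $r(x)=\inf\{q\in\mathbb{Q}:x\in A_q\}$, whose pushforward distribution is uniform on $[0,1]$). Hence $L\setminus U$ is $\mu^2$-null, and (with sub-$\sigma$-algebras treated as $\mu^2$-complete in the standard way) $L\in\mathcal{B}_{2,1}$. The passage from $L$ to $C_\pi$ is then immediate: writing
\[C_\pi=\bigcap_{i\neq j}\bigl\{\vec x_{[k]} : (x_i,x_j)\in L \iff \pi(i)<\pi(j)\bigr\},\]
each conjunct is the pullback of $L$ or its complement under a coordinate projection $\Omega^k\to\Omega^2$, so $C_\pi\in\mathcal{B}_{k,1}$.

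The main obstacle is the null-set bookkeeping when passing from ``$L=U$ modulo null sets'' to strict membership $L\in\mathcal{B}_{2,1}$: the inclusion $U\subseteq L$ is literal, but the reverse inclusion only holds $\mu^2$-almost everywhere, so concluding set-theoretic membership requires completing the sub-$\sigma$-algebra. This is a routine convention but worth flagging. A secondary technical point is that $\mu(A_q)=q$ transfers exactly from the finite densities, which follows immediately from the density clause of Theorem~\ref{thm:ultraproduct}.
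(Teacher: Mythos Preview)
Your proof is correct and uses the same core idea as the paper---build order-intervals in $\mathcal{B}_1$ from the linear order and approximate---but is organized differently. The paper works directly in $\Omega^k$: it partitions $\Omega$ into $m$ consecutive intervals $R_0,\ldots,R_{m-1}$ of equal measure, defines $C_m$ to be the set of tuples whose coordinates fall in distinct intervals in the prescribed relative order, notes $C_m\subseteq C$ with $\mu^k(C\setminus C_m)\to 0$, and concludes $C\in\mathcal{B}_{k,1}$ (invoking, implicitly, the same completion convention you flag). You instead first prove $L\in\mathcal{B}_{2,1}$ via the rank function and then pull back to $\Omega^k$. The two constructions match up: your initial segments $A_q$ are exactly what the paper uses to build its intervals $R_i=A_{(i+1)/m}\setminus A_{i/m}$, so the arguments are essentially equivalent, with yours slightly more modular and the paper's more direct. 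Your reading of the conclusion as $C_\pi\in\mathcal{B}_{k,1}$ rather than $\mathcal{B}_{2,1}$ is right; the latter is a typo that recurs in the paper's own proof.
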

\begin{proof}
  It suffices to show this with $\pi$ the identity, since any other $C_\pi$ is a permutation of this set.  So consider the set $C$ of $\vec x_{[k]}$ so that $x_i<x_j$ iff $i<j$.  We show that for every $\epsilon>0$, there is an element $C_\epsilon$ in $\mathcal{B}_{2,1}$ so that $\mu^k(C_\epsilon\bigtriangleup C)<1/\epsilon$.  Note that each interval $(x,y)=\{z\mid x<z<y\}$ is in $\mathcal{B}_1$ since $L$ is measurable and $(x,y)$ is an intersection of two slices of $L$.  Pick $m$ large enough and choose a partition $\Omega=\bigcup_{i< m}R_i$ where the $R_i$ are disjoint intervals with $\mu(R_i)=1/m$.  (Partitions approximating this exist in the $V_n$, so the limit of these partitions gives us the $R_i$.)  Order the $R_i$ so that $i<j$, $x\in R_i$, $y\in R_j$, implies $x<y$.  For each $x\in\Omega$, let $R(x)$ with the $i< m$ such that $x\in R_i$.

We define $C_m$ to consist of those $\vec x_{[k]}$ such that $i<j$ iff $R(x_i)<R(x_j)$.  Certainly $C_m\subseteq C$, and $\vec x_{[k]}\in C\setminus C_m$ iff there are $i\neq j$ with $R(x_i)=R(x_j)$.  The set of $\vec x_{[k]}$ where $i\neq j$ implies $R(x_i)\neq R(x_j)$ has measure $\frac{{m\choose k}k!}{m^k}$, so the complement has measure $1-\frac{{m\choose k}k!}{m^k}$.  By choosing $m$ large enough, we may arrange for $1-\frac{{m\choose k}k!}{m^k}<\epsilon$, and since $C\setminus C_m$ is contained in a set with this measure, $\mu^k(C\bigtriangleup C_m)<\epsilon$.

Since $C$ is a measurable set arbitrarily well approximated by elements of the $\sigma$-algebra $\mathcal{B}_{2,1}$, $C$ itself belongs to $\mathcal{B}_{2,1}$.
\end{proof}

\begin{theorem}
  Let $(\Omega,\{\mathcal{B}_k\},\{\mu^k\})$ be an ultraproduct.  For any $\mathcal{I}$ subset-free on $[k]$ and any $p\in(0,1)$, there is a symmetric $E\subseteq\Omega^k$ with $\mu^k(E)=p$ such that $E$ is not \Disc{p}{\mathcal{I}}, but for any $\mathcal{J}$ such that $\mathcal{I}\not\leq\mathcal{J}$, $E$ is \Disc{p}{\mathcal{J}}.
\end{theorem}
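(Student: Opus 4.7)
The plan is to set $E = \mathcal{K}_k(\{H_I\}_{I\in\mathcal{I}})$ for a family of symmetric sets $H_I \in \mathcal{B}_I$, chosen so that $E$ has measure $p$ and each $H_I$ is ``maximally quasirandom on $I$'' in the sense that $\mathbb{E}(\chi_{H_I}\mid\mathcal{B}_{|I|,\mathcal{I}'}) = \mu^{|I|}(H_I)$ for every subset-free $\mathcal{I}'$ on $I$ with $\mathcal{I}' < \{I\}$. Since $E$ will then be $\mathcal{B}_{k,\mathcal{I}}$-measurable by construction, the failure of $\chi_E$ to be constant automatically guarantees that $E$ is not \Disc{p}{\mathcal{I}}; the delicate part is showing the second clause.

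To produce each $H_I$ one can combine the ordering supplied by the preceding lemma with the richness of the ultraproduct: partition $\Omega$ into a large number $m$ of equal-measure intervals $R_0<\cdots<R_{m-1}$ and let $H_I$ be a symmetric union of product blocks, chosen so that the induced subset of $\binom{[m]}{|I|}$ is quasirandom in the finite sense; equivalently, apply Theorem~\ref{thm:sampling} to a sequence of finite quasirandom $|I|$-uniform hypergraphs and let $H_I$ be the limit. The measures of the $H_I$, together with an optional disjoint correcting set, are tuned so that $\mu^k(E) = p$.

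To verify $E$ is \Disc{p}{\mathcal{J}} whenever $\mathcal{I}\not\leq\mathcal{J}$, test $\mathbb{E}(\chi_E \mid \mathcal{B}_{k,\mathcal{J}}) = p$ against the generators $\prod_{J\in\mathcal{J}}\chi_{B_J}(\vec x_J)$. Expanding $\chi_E$ by inclusion--exclusion over the bijections appearing in the definition of $\mathcal{K}_k$, each term becomes (after a change of variables) an integral of the shape $\int \prod_{I\in\mathcal{I}}\chi_{H_I}(\vec y_I)\prod_{J\in\mathcal{J}}\chi_{B_J}(\vec y_{\sigma(J)})\,d\mu^k$ for some bijection $\sigma$ of $[k]$. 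The hypothesis $\mathcal{I}\not\leq\mathcal{J}$ supplies, for every such $\sigma$, an $I_\sigma\in\mathcal{I}$ with $I_\sigma\not\subseteq\sigma(J)$ for all $J\in\mathcal{J}$; the collection $\{I_\sigma\cap\sigma(J):J\in\mathcal{J}\}^{\#}$ is then subset-free on $I_\sigma$ and strictly below $\{I_\sigma\}$, so the maximal quasirandomness of $H_{I_\sigma}$ lets one replace $\chi_{H_{I_\sigma}}$ by the constant $\mu^{|I_\sigma|}(H_{I_\sigma})$ inside the integral without changing its value. Iterating this reduction over the remaining $H_I$'s and resumming over $\sigma$ produces exactly $p\cdot\int\prod_J\chi_{B_J}\,d\mu^k$, as required.

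The main obstacle will be managing that iteration cleanly: once one factor $\chi_{H_{I_\sigma}}$ has been replaced by its mean, the remaining integrand still couples the surviving $\chi_{H_I}$'s with the $\chi_{B_J}$'s, and one must check that the same genericity argument continues to apply at each successive stage. This should follow if the $H_I$'s are chosen mutually independent in a suitably strong sense---so that fixing the values of $H_{I'}$ for $I'\neq I_\sigma$ does not bias the slice on which $H_{I_\sigma}$ is evaluated---combined with an induction on $|\mathcal{I}|$, but the combinatorial bookkeeping of how the overlapping sets $I$ and $\sigma(J)$ interact is where the care is needed.
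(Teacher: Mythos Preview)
Your construction $E=\mathcal{K}_k(\{H_I\}_{I\in\mathcal{I}})$ does not work: it can fail to be \Disc{p}{\mathcal{J}} for some $\mathcal{J}$ with $\mathcal{I}\not\leq\mathcal{J}$.  Take $k=3$, $\mathcal{I}=\{\{0,1\},\{2\}\}$, and $\mathcal{J}=\{\{0,1\}\}$; one checks $\mathcal{I}\not\leq\mathcal{J}$ since no image $\pi(\{0,1\})$ of size at most $2$ can contain both $\{0,1\}$ and $\{2\}$.  With your $E$, whenever $(x_0,x_1)$ lies in the positive-measure set where $\{x_0,x_1\}\notin H_{\{0,1\}}$ and $x_0,x_1\notin H_{\{2\}}$, every one of the three disjuncts defining $\mathcal{K}_3$ is already ruled out regardless of $x_2$, so $\chi_E(x_0,x_1,x_2)=0$ for all $x_2$ and hence $\mathbb{E}(\chi_E\mid\mathcal{B}_{3,\mathcal{J}})=0\neq p$ there.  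No amount of quasirandomness or mutual independence of the $H_I$'s rescues this: the intersection structure of $\mathcal{K}_k$ is a conjunction, and once the $\mathcal{J}$-measurable data falsify every branch, the free coordinates are irrelevant.

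The failure occurs exactly at the ``iteration'' you flagged.  The hypothesis $\mathcal{I}\not\leq\mathcal{J}$ supplies, for each bijection $\sigma$, only \emph{one} $I_\sigma\in\mathcal{I}$ not covered by any $\sigma(J)$; the remaining $I\in\mathcal{I}$ may well satisfy $I\subseteq\sigma(J)$ for some $J$, and for those factors there is nothing to invoke---$\chi_{H_I}(\vec y_I)$ is then genuinely a function of $\vec y_{\sigma(J)}$ and cannot be decoupled from $\chi_{B_J}$.  Replacing only $\chi_{H_{I_\sigma}}$ by its mean and resumming the inclusion--exclusion does not collapse these residual dependencies to $p\cdot\mu^k\bigl(\bigcap_J B_J\bigr)$ (and note that the higher inclusion--exclusion terms are intersections $\bigcap_{\pi\in S}A_\pi$ over \emph{sets} of bijections, which do not even have the product shape you describe).

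The paper sidesteps this by combining the pieces additively rather than multiplicatively.  For each $I\in\mathcal{I}$ it builds a $[0,1]$-valued $f_I:\Omega^{|I|}\to[0,1]$ via nested random bipartitions, so that each sublevel set $\{f_I<q\}$ is $\Disc{q}{|I|-1}$, and then sets
\[
E=\Bigl\{\vec x_{[k]}:\sum_{I\in\mathcal{I}}f_I(\vec x^{\,\dagger}_I)\bmod 1<p\Bigr\},
\]
where $\vec x^{\,\dagger}$ is $\vec x$ sorted according to the linear order supplied by the preceding lemma (this is what makes $E$ symmetric while keeping it $\mathcal{B}_{k,\mathcal{I}}$-measurable on each order-cone $C_\pi$).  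The point of the mod-$1$ sum is that it is uniform as soon as any \emph{single} summand is conditionally uniform; the one free index $I_0$ guaranteed by $\mathcal{I}\not\leq\mathcal{J}$ therefore already forces $\mathbb{E}(\chi_E\mid\mathcal{B}_{k,\mathcal{J}})=p$, and no iteration over the remaining $I$'s is needed.
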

\begin{proof}
$(\Omega,\{\mathcal{B}_k\},\{\mu^k\})$ is an ultraproduct of graphs with underlying vertex sets $V_n$.  We fix a linear ordering $<_n$ on each $V_n$, and let $L_n=\{(x,y)\mid x<_n y\}$ and $L=\lim\langle L_n\rangle$.  We then take the corresponding linear ordering of $\Omega$, $x<y$ when $(x,y)\in L_n$.

First, suppose $\bigcup\mathcal{I}\neq[k]$.  Then $\mathcal{I}$ is subset-free on $[k]\setminus\{i\}$ for some $i<k$, so we could find $E\subseteq\Omega^{[k]\setminus\{i\}}$ witnessing the claim for $\mathcal{I}$ on $[k]\setminus\{i\}$, and then take $\{\vec x_{[k]}\mid \vec x_{[k]\setminus\{i\}}\in E\}$ to witness the claim for $\mathcal{I}$ on $[k]$.  Therefore we assume that $\bigcup\mathcal{I}=[k]$.

For every $I\in\mathcal{I}$ and every $n$, set $A^{\langle\rangle}_{I,n}=V_n$.  For every binary sequence $\sigma$, choose a random partition of ${V_n\choose |I|}$ into $A^{\sigma^\frown \langle0\rangle}_{I,n}$ and $A^{\sigma^\frown\langle 1\rangle}_{I,n}$.  Clearly for each $\sigma$, with probability $1$, $\lim_{n\rightarrow\infty}\frac{|A^\sigma_{I,n}|}{|V_n|}=2^{-|\sigma|}$ and the sequences $\langle A^{\sigma}_{I,n}\rangle$ are $\mathtt{CliqueDisc}_{2^{-|\sigma|}}[|I|-1]$.  Associate each sequence $\sigma$ with the dyadic rational $q(\sigma)=\sum_{i<|\sigma|}\sigma(i)2^{-i}$.  For every $p\in(0,1)$, we may choose sequences $m_{p,n}$ and $\sigma_{p,n}$ with $\lim_{n\rightarrow\infty}q(\sigma_{p,n})=p$ so that $\lim_{n\rightarrow\infty}\frac{|A^{\sigma_{p,n}}_{I,n}|}{|V_n|}=p$.  We define $A^p_I=\lim\langle A^{i_{p,n}}_{I,m_{p,n}}\rangle$, so $A^p_I$ is \Disc{p}{|I|-1}.  Note that when $p<q$ we have $A^p_I\subseteq A^q_I$.

Outside a set of measure $0$, any $\vec x_{[k]}$ is linearly ordered by $<$; given $\vec x_{[k]}$, let $\vec x^\dagger_{[k]}$ be the permutation of $\vec x_{[k]}$ so that $i<j$ implies $x^\dagger_i< x^\dagger_j$.  We define $f_I(\vec x_I)=\inf\{p\mid \vec x_I\in A^p_I\}$.  We take $E=\{\vec x_{[k]}\mid \sum_{I\in\mathcal{I}}f_I(\vec x^\dagger_{I})\mod 1<p\}$.  $E$ is symmetric since it depends only on $\vec x^\dagger_{[k]}$.

For each permutation $\pi$ of $[k]$, let $C_\pi$ be as in the preceding Lemma the set of $\vec x_{[k]}$ with $x_i<x_j$ iff $\pi(i)<\pi(j)$.  Then when $\pi\neq\pi$, $C_\pi\cap C_{\pi'}=\emptyset$ and for each $\pi$, $E\cap C_\pi$ is in $\mathcal{B}_{k,\mathcal{I}}$ since $E\cap C_\pi$ has the form $\{\vec x_{[k]}\mid\sum_{I\in\mathcal{I}}f_I(\vec x_{\pi^{-1}(I)})\}$ for a suitable choice of $\pi$.  Therefore the union $E=\bigcup_\pi E\cap C_\pi$ is $\mathcal{B}_{k,\mathcal{I}}$-measurable.  Since $E$ is not constant, $E$ is not \Disc{p}{\mathcal{I}}.

Consider any basic $\mathcal{J}$ with $\mathcal{I}\not\leq\mathcal{J}$.  Take any basic $\mathcal{B}_{k,\mathcal{J}}$ set $D=\{\vec x_{[k]}\mid\forall J\in\mathcal{J}'\, \vec x_J\in D_J\}$ where $\mathcal{J}'$ is any permutation of $\mathcal{J}$.  Then there is an $I_0\in\mathcal{I}$ so that no $J\in\mathcal{J}'$ has $I_0\subseteq J$.   Then
\[\int \chi_E\chi_{D}d\mu^k=\int\chi_E\prod_J\chi_{D_J}d\mu^k=\iint \chi_E\prod\chi_{D_J}d\mu^{I_0}d\mu^{[k]\setminus I_0}.\]
Consider some fixed $\vec x_{[k]\setminus I_0}$ and set $E'=E(\vec x_{[k]\setminus I_0})$, $D'=D(\vec x_{[k]\setminus I_0})$ and for each $\pi$, $C'_\pi=C_\pi(\vec x_{[k]\setminus I_0})$.

Fix some $m$ with $1/m<p$ and $1/m<1-p$, and for any $i<m$, let $S^\pi_i=\{\vec x_{I_0}\mid\sum_{I\in\mathcal{I},I\neq I_0}f_I(\vec x_{\pi^{-1}(I)})\in[i/m,(i+1)/m)\}$.  ($\vec x_I$ depends on the fixed choice of $\vec x_{[k]\setminus I_0}$ as well as $\vec x_{I_0}$.)  Note that $D'\cap C'_\pi\cap S^\pi_{i}$ is $\mathcal{B}_{I_0,|I_0|-1}$-measurable.  Let $U^\pi_i=\{\vec x_{I_0}\mid [i/m+f_{I_0}(\vec x_{\pi^{-1}(I_0)})\mod 1,(i+1)/m+f_{I_0}(\vec x_{\pi^{-1}(I_0)}))\subseteq [0,p)\}$ and $V^\pi_i=\{\vec x_{I_0}\mid [i/m+f_{I_0}(\vec x_{\pi^{-1}(I_0)}),(i+1)/m+f_{I_0}(\vec x_{\pi^{-1}(I_0)}))\subseteq [p,1)\}$.  Note that for each $i$, $\mu^{|I_0|}(U^\pi_i\cup V^\pi_i)=1-2/m$, $U^\pi_i$ is \Disc{p-1/m}{|I_0|-1}, and $V^\pi_i$ is \Disc{1-p-1/m}{|I_0|-1}.  Also, for any $i$, $C'_\pi\cap S^\pi_i\cap U^\pi_i\subseteq E'\cap C'_\pi$ while $S^\pi_i\cap V^\pi_i\cap E'\cap C'_\pi=\emptyset$.

Then for any $\pi$,
\begin{align*}
  \mu^{|I_0|}(E'\cap D'\cap C'_\pi)
&=\sum_i\mu^{|I_0|}(E'\cap D'\cap C'_\pi\cap S^\pi_i)\\
&=\sum_i\mu^{|I_0|}(E'\cap D'\cap C'_\pi\cap S^\pi_i\cap U^\pi_i)+\\
&\ \ \ \ \ \ \ \ \mu^{|I_0|}(E'\cap D'\cap C'_\pi\cap S^\pi_i\cap V^\pi_i)+\\
&\ \ \ \ \ \ \ \ \mu^{|I_0|}(E'\cap D'\cap C'_\pi\cap S^\pi_i\setminus(U^\pi_i\cup V^\pi_i))\\
&=\sum_i\mu^{|I_0|}(D'\cap C'_\pi\cap S^\pi_i\cap U^\pi_i)+\mu^{|I_0|}(E'\cap D'\cap C'_\pi\cap S^\pi_i\setminus(U^\pi_i\cup V^\pi_i))\\
&=\sum_i(p-1/m)\mu^{|I_0|}(D'\cap C'_\pi\cap S^\pi_i)+\mu^{|I_0|}(E'\cap D'\cap C'_\pi\cap S^\pi_i\setminus(U^\pi_i\cup V^\pi_i)).\\
\end{align*}
Therefore $\left|\mu^{|I_0|}(E'\cap D'\cap C'_\pi)-p\mu^{|I_0|}(D'\cap C'_\pi)\right|<3/m$.  Since we may make $1/m$ arbitrarily small, $\mu^{|I_0|}(E'\cap D'\cap C_\pi)=p\mu^{|I_0|}(D'\cap C_\pi)$ for each $\pi$, and so $\mu^{|I_0|}(E'\cap D')=p\mu^{|I_0|}(D')$.

Since this holds for each $\vec x_{[k]\setminus I_0}$, $\int \chi_E\chi_{D}d\mu^k=p\mu^k(D)$.  Since this holds for every permutation of every $\mathcal{B}_{k,\mathcal{J}}$-measurable set $D$, $E$ is \Disc{p}{\mathcal{J}}.
\end{proof}

\begin{theorem}
For any $\mathcal{I}$ subset-free on $[k]$ and any $p\in(0,1)$, there is sequence of $k$-uniform hypergraphs $\{(V_n,E_n)\}$ which is not \Disc{p}{\mathcal{I}}, but for any $\mathcal{J}$ such that $\mathcal{I}\not\leq\mathcal{J}$, $\{(V_n,E_n)\}$ is \Disc{p}{\mathcal{J}}.
\end{theorem}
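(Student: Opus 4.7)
The plan is to transfer the infinitary result just proved to the finite setting via the sampling theorem. First I would start with any ultraproduct $(\Omega,\{\mathcal{B}_k\},\{\mu^k\})$ of a sequence $\{V_n\}$ with $|V_n|\to\infty$ (for concreteness $V_n=[n]$), and invoke the preceding theorem to obtain a symmetric $E\subseteq\Omega^k$ with $\mu^k(E)=p$ such that $E$ is not \Disc{p}{\mathcal{I}} but $E$ is \Disc{p}{\mathcal{J}} for every $\mathcal{J}$ with $\mathcal{I}\not\leq\mathcal{J}$.

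Next, apply Theorem \ref{thm:sampling} to $E$ to produce a sequence of finite $k$-uniform hypergraphs $\{(V_n,E_n)\}$ with the property that $\lim_{n\to\infty}t_H((V_n,E_n))=t_H(E)$ for every finite $k$-uniform hypergraph $H$. Taking $H$ to be a single edge gives $\frac{|E_n|}{\binom{|V_n|}{k}}\to\mu^k(E)=p$, supplying the density hypothesis needed to apply Theorem \ref{thm:main_finite}.

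To see that $\{(V_n,E_n)\}$ is not $\mathtt{Disc}_p[\mathcal{I}]$: since $E$ is not \Disc{p}{\mathcal{I}} while $\mu^k(E)=p$, the equivalence $(1)\Leftrightarrow(3)$ of Theorem \ref{thm:main_inf} forces $t_{M_k[\mathcal{I}]}(E)\neq p^{2^{|\mathcal{I}|}}$. Hence $t_{M_k[\mathcal{I}]}((V_n,E_n))\to t_{M_k[\mathcal{I}]}(E)\neq p^{2^{|\mathcal{I}|}}$, and the equivalence $(1)\Leftrightarrow(4)$ of Theorem \ref{thm:main_finite} yields the failure of $\mathtt{Disc}_p[\mathcal{I}]$.

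Conversely, for any $\mathcal{J}$ with $\mathcal{I}\not\leq\mathcal{J}$, $E$ is \Disc{p}{\mathcal{J}}, so by $(1)\Rightarrow(2)$ of Theorem \ref{thm:main_inf}, $t_H(E)=p^{|F|}$ for every $\mathcal{J}$-adapted $k$-uniform hypergraph $H=(W,F)$; transferring through the sampling theorem, $t_H((V_n,E_n))\to p^{|F|}$ for every such $H$, and the equivalence $(1)\Leftrightarrow(3)$ of Theorem \ref{thm:main_finite} gives $\{(V_n,E_n)\}\in\mathtt{Disc}_p[\mathcal{J}]$. There is no real obstacle here beyond checking these conversions: all substantive work lives in the preceding infinitary separation theorem and in Theorem \ref{thm:sampling}; the only mild subtlety is that Theorem \ref{thm:sampling} produces a sequence satisfying the convergence $t_H((V_n,E_n))\to t_H(E)$ with probability $1$ for each fixed $H$, but since there are only countably many finite hypergraphs $H$ to consider, a single sequence lies in the countable intersection of measure-$1$ events and thus works simultaneously for all $H$.
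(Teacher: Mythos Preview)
Your proposal is correct and follows essentially the same route as the paper's proof: pull the infinitary separating set $E$ from the preceding theorem, sample via Theorem \ref{thm:sampling}, and then pass between the infinitary and finitary characterizations (Theorems \ref{thm:main_inf} and \ref{thm:main_finite}) by counting $M_k[\mathcal{I}]$ and $M_k[\mathcal{J}]$. You are in fact slightly more careful than the paper in explicitly verifying the density hypothesis $\tfrac{|E_n|}{\binom{|V_n|}{k}}\to p$ needed to invoke Theorem \ref{thm:main_finite}; your closing remark about the countable intersection of measure-$1$ events is already absorbed into the statement of Theorem \ref{thm:sampling}, so you need not repeat it here.
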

\begin{proof}
  By the previous Theorem, we may take any ultraproduct $(\Omega,\{\mu^k\},\{\mathcal{B}_k\})$ and choose a symmetric $E\subseteq\Omega^k$ which is not \Disc{p}{\mathcal{I}} but is \Disc{p}{\mathcal{J}} whenever $\mathcal{I}\not\leq\mathcal{J}$.  By Theorem \ref{thm:sampling}, we may find a sequence $\{(V_n,E_n)\}$ so that for every $H$, $\lim_{n\rightarrow\infty}t_H((V_n,E_n))=t_H(E)$.  Using Theorem \ref{thm:main_inf} on $E$ and Theorem \ref{thm:main_finite} on the sequence $\{(V_n,E_n)\}$ to count copies of $M_k[\mathcal{I}]$ and $M_k[\mathcal{J}]$, we see that $\{(V_n,E_n)\}$ is not \Disc{p}{\mathcal{I}} but is \Disc{p}{\mathcal{J}} for each $\mathcal{J}$ with $\mathcal{I}\not\leq\mathcal{J}$.
\end{proof}

\section{Conclusion}

There are many notions equivalent to $p$-quasirandomness for graphs, and this paper only discusses a few.  Other equivalent versions of \Disc{p}{1} or \Disc{p}{\mathcal{I}} when $\mathcal{I}$ is a partition are given in \cite{MR2864650} and \cite{2012arXiv1208.4863L,2013arXiv1309.3584L}; it would be interesting to see some of these equivalences generalized to all choices of $\mathcal{I}$.  A particularly interesting case is the spectral characterization given in \cite{2012arXiv1208.4863L,2013arXiv1309.3584L}, since the spectral analog for the graph case is well understood: we can associate to a graph $E$ the operator $f(x)\mapsto \int f(x)\chi_E(x,y) d\mu^1(x)$ mapping $L^2$ functions to $L^2$ functions, and the spectral properties of this operator are the limit of the spectral properties of the graphs $E_n$.

We could take the case $\mathcal{B}_{k,\emptyset}$ to be the trivial $\sigma$-algebra $\{\emptyset,\Omega^k\}$; then saying $E$ is \Disc{p}{\mathcal{I}} amounts to saying that $\mathbb{E}(E\mid\mathcal{B}_{k,\mathcal{I}})=\mathbb{E}(E\mid\mathcal{B}_{k,\emptyset})=p$.  More generally, we could take $\sigma$-algebras corresponding to $\mathcal{I}<\mathcal{J}$ and consider hypergraphs $E$ which are ``relatively random'', in the sense that $\mathbb{E}(E\mid\mathcal{B}_{k,\mathcal{J}})=\mathbb{E}(E\mid\mathcal{B}_{k,\mathcal{I}})$.  For instance, the case where $\mathbb{E}(E\mid\mathcal{B}_{k,\mathcal{I}})=\mathbb{E}(E\mid\mathcal{B}_{k,1})$ is a hypergraph $E$ which has a prescribed set of regularity partitions.  For instance, when $k=2$, it is sometimes useful to consider graphs which are bipartite with a specified density $p$, and random relative to this property.  This is precisely the same as saying that $\mathbb{E}(E\mid\mathcal{B}_{2,1})$ is the function which is equal to $0$ on pairs in the same component and $p$ on pairs which cross components.  We expect that many results about quasirandomness generalize to this setting.

\bibliographystyle{plain}
\bibliography{../../Bibliographies/main}
\end{document}